\newdimen\plusheight
\def\+{\;\lower\plusheight\hbox{\underline{}$+$}\;}
\newdimen\minusheight
\def\-{\;\lower\minusheight\hbox{$-$}\;}
\newdimen\cdotsheight
\def\cds{\lower\cdotsheight\hbox{$\cdots$}}
\newtheorem{rmk}{Remark}
\newtheorem{question}{Question}
\newtheorem{conjecture}{Conjecture}
\theoremstyle{definition}
\newtheorem{example}{Example}
\theoremstyle{definition}
\numberwithin{equation}{section}
\theoremstyle{plain}
\newtheorem{theorem}{Theorem}[section]
\newtheorem{corollary}[theorem]{Corollary}
\newtheorem{definition}[theorem]{Definition}
\newtheorem{lemma}[theorem]{Lemma}
\newtheorem{claim}[theorem]{Claim}
\theoremstyle{definition}
\titleformat*{\section}{\LARGE\bfseries}
\renewcommand\section{\@startsection{section}{1}{\z@}%
                                  {-3.5ex \@plus -1ex \@minus -.2ex}%
                                  {2.3ex \@plus.2ex}%
                                  {\normalfont\large\bfseries}}
\title{On the number of hyperbolic Dehn fillings of a given volume}
\author{BoGwang Jeon}
\begin{document}
\maketitle
\begin{abstract}
Let $\mathcal{M}$ be a $1$-cusped hyperbolic $3$-manifold whose cusp shape is quadratic. We show that there exists $c=c(\mathcal{M})$ such that the number of hyperbolic Dehn fillings of $\mathcal{M}$ with any given volume $v$ is uniformly bounded by $c$. 
\end{abstract}

\section{Introduction}
\subsection{Main theorem}
The following theorem due to W. Thurston is well-known:
\begin{theorem}[W. Thurston]\label{18110701}
Let $\mathcal{M}$ be an $n$-cupsed hyperbolic $3$-manifold and $M_{(p_1/q_1,\dots, p_n/q_n)}$ be its $(p_1/q_1,\dots, p_n/q_n)$-Dehn filling. Then 
\begin{equation*}
\text{\normalfont\ vol}(\mathcal{M}_{(p_1/q_1,\dots, p_n/q_n)})\rightarrow \text{\normalfont\ vol}(\mathcal{M})
\end{equation*}
as $|p_i|+|q_i|\rightarrow \infty$ ($1\leq i\leq n$).
\end{theorem}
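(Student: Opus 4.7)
The plan is to prove Theorem~\ref{18110701} via Thurston's deformation theory of hyperbolic structures on cusped $3$-manifolds. The complete hyperbolic structure on $\mathcal{M}$ sits as a smooth point of a complex-analytic deformation space, nearby points of which correspond to incomplete hyperbolic structures on $\mathcal{M}$ whose metric completions are the Dehn fillings of $\mathcal{M}$. As the filling slopes tend to infinity the associated deformation parameters tend to the complete structure, and volume convergence then follows by continuity of the volume function.

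First, I would set up the deformation space. The complete hyperbolic structure corresponds to a discrete faithful representation $\rho_0 \colon \pi_1(\mathcal{M}) \to \mathrm{PSL}(2,\mathbb{C})$, under which each cusp subgroup is parabolic. Thurston shows that a neighborhood of $[\rho_0]$ in the character variety is a smooth complex manifold of dimension $n$, parametrized by complex numbers $(u_1,\ldots,u_n)$ measuring how far the holonomy of each meridian $m_i$ is from being parabolic, with $u_i = 0$ recovering $\rho_0$.

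Next, for each cusp there is a holomorphic ``cusp function'' $\tau_i(u_i)$ for which the $(p_i,q_i)$-Dehn filling equation reads $p_i\, u_i + q_i\, \tau_i(u_i) = 2\pi \sqrt{-1}$. By the implicit function theorem, once every $|p_i|+|q_i|$ is sufficiently large the system admits a unique solution $(u_1^*,\ldots,u_n^*)$ near the origin, and $u_i^* \to 0$ as $|p_i|+|q_i|\to\infty$. The corresponding deformed representations are then shown to be discrete and faithful, with metric completions equal to the closed hyperbolic manifolds $\mathcal{M}_{(p_1/q_1,\ldots,p_n/q_n)}$.

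Finally, the hyperbolic volume of the deformed structure is a real-analytic function of $(u_1,\ldots,u_n)$, expressible via Bloch--Wigner dilogarithms evaluated at the shape parameters of an ideal triangulation and governed by the Schl\"afli formula. Continuity at the origin then yields
\[
\text{vol}(\mathcal{M}_{(p_1/q_1,\ldots,p_n/q_n)}) \longrightarrow \text{vol}(\mathcal{M}).
\]
The hard part will be verifying that the algebraically deformed representations actually correspond to genuine complete hyperbolic structures on the filled manifolds: one must show that the holonomy remains discrete and faithful, and that the metric completion adjoins a short core geodesic around which the meridian performs a full rotation. This is the geometric heart of Thurston's hyperbolic Dehn surgery theorem and requires careful control of the cusp geometry under deformation; once it is in place, the volume convergence is essentially a soft continuity statement.
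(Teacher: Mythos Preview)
The paper does not prove this theorem at all: it is stated in the introduction as a well-known result of Thurston and serves only as background and motivation (it is used to derive Corollaries~\ref{18110702} and~\ref{18110703}). There is therefore no ``paper's own proof'' to compare against.

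Your outline is a faithful sketch of the standard argument, essentially Thurston's original approach as later made precise by Neumann--Zagier in \cite{NZ}, which the paper does cite for the refined asymptotic formula (Theorem~\ref{NZ1}). In fact the Neumann--Zagier formula \eqref{18111601} gives a quantitative version of the convergence you describe: not only does $\text{vol}(\mathcal{M}_{p/q})\to\text{vol}(\mathcal{M})$, but the difference is asymptotic to $-\pi^2\,\text{Im}\,c_1/|p+c_1q|^2$. So your proposal is correct in spirit, and the paper simply takes the qualitative statement for granted while invoking \cite{NZ} for the sharper asymptotics it actually needs.
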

This implies 
\begin{corollary}\label{18110702}
Let $\mathcal{M}$ be an $n$-cusped hyperbolic $3$-manifold. The number $N_{\mathcal{M}}(v)$ of hyperbolic Dehn fillings of $\mathcal{M}$ with a given volume $v$ is finite. 
\end{corollary}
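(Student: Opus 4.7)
My plan is to proceed by induction on the number of cusps $n$, arguing by contradiction. Suppose, toward a contradiction, that there is an infinite sequence of distinct multi-slopes $\vec{s}_k = (p_1^{(k)}/q_1^{(k)}, \dots, p_n^{(k)}/q_n^{(k)})$ such that each $\mathcal{M}_{\vec{s}_k}$ is hyperbolic of volume $v$. The two ingredients I intend to combine are Theorem \ref{18110701} and the Jorgensen--Thurston strict monotonicity of hyperbolic volume under Dehn filling, which guarantees $\text{vol}(\mathcal{M}_{\vec{s}}) < \text{vol}(\mathcal{M})$ for every hyperbolic Dehn filling of a cusped hyperbolic manifold $\mathcal{M}$.

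For the base case $n = 1$, distinctness of the slopes forces $|p_k|+|q_k|\to\infty$, so Theorem \ref{18110701} yields $v = \text{vol}(\mathcal{M})$, which contradicts the strict monotonicity. For the inductive step, I would use a diagonal extraction to pass to a subsequence along which, for each cusp $i$, the slope $p_i^{(k)}/q_i^{(k)}$ either stabilizes at some fixed $s_i^*$ (let $J$ collect such indices) or satisfies $|p_i^{(k)}|+|q_i^{(k)}|\to\infty$ (collected in $I$). Distinctness of $\vec{s}_k$ forces $I\neq\emptyset$. If $J=\emptyset$, Theorem \ref{18110701} again gives $v = \text{vol}(\mathcal{M})$ and the same contradiction. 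Otherwise, set $\mathcal{M}':=\mathcal{M}_{(s_j^*)_{j\in J}}$, a $3$-manifold with $|I|<n$ cusps; when $\mathcal{M}'$ is hyperbolic, the factorization $\mathcal{M}_{\vec{s}_k}=(\mathcal{M}')_{(p_i^{(k)}/q_i^{(k)})_{i\in I}}$ exhibits infinitely many hyperbolic fillings of $\mathcal{M}'$ of volume $v$, contradicting the inductive hypothesis applied to $\mathcal{M}'$.

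The main obstacle I anticipate is the possibility that the partial filling $\mathcal{M}'$ is not hyperbolic, which can happen precisely when the fixed slopes $(s_j^*)_{j\in J}$ include exceptional slopes in the sense of Thurston's hyperbolic Dehn surgery theorem. To handle this, I would extract a constant slope one cusp at a time and use that, at each cusp of $\mathcal{M}$, only finitely many slopes produce a non-hyperbolic partial filling. For each of those finitely many exceptional configurations one can appeal to the full strength of Thurston's theorem---continuity of the volume function on the entire Dehn surgery space of $\mathcal{M}$, combined with the Gromov--Thurston comparison $\text{vol}(\mathcal{M}_{\vec{s}_k})\leq v_3\|\mathcal{M}'\|$ with simplicial volume and strict monotonicity under further filling---to force a contradiction again and complete the induction.
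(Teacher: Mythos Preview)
The paper offers no proof beyond the words ``This implies'' after Theorem~\ref{18110701}; for $n=1$---the only case the paper actually uses later---your argument and that remark coincide and are complete. For general $n$ your inductive reduction to a partial filling $\mathcal{M}'=\mathcal{M}_{(s_j^*)_{j\in J}}$ with $|I|<n$ cusps is the standard way to flesh out the implication, and it goes through cleanly whenever $\mathcal{M}'$ is hyperbolic.

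Your handling of the exceptional case where $\mathcal{M}'$ is not hyperbolic has a genuine gap, however. The chain $v=\operatorname{vol}(\mathcal{M}_{\vec{s}_k})\le v_3\|\mathcal{M}'\|<\operatorname{vol}(\mathcal{M})$ only reproves $v<\operatorname{vol}(\mathcal{M})$, which is no contradiction. To obtain one you would need $\operatorname{vol}(\mathcal{M}_{\vec{s}_k})\to v_3\|\mathcal{M}'\|$ as the $I$--slopes tend to $\infty$, but Thurston's continuity of volume on the Dehn surgery space is only established at points whose associated (partial) filling is hyperbolic, which is precisely what fails here; and Gromov norm is not in general \emph{strictly} decreasing under Dehn filling of a non-hyperbolic $3$--manifold, so ``strict monotonicity under further filling'' does not supply the missing inequality either. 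A correct way to finish is to JSJ--decompose $\mathcal{M}'$: for $\mathcal{M}'_{(s_i)_{i\in I}}$ to be closed hyperbolic every JSJ torus must become inessential after filling, and tracing this through shows that the infinitely many hyperbolic fillings induce infinitely many distinct slopes on some cusped \emph{hyperbolic} JSJ piece of $\mathcal{M}'$, to which the inductive hypothesis then applies directly. Alternatively, bypass $\mathcal{M}'$ altogether and invoke J{\o}rgensen's compactness for closed hyperbolic $3$--manifolds of bounded volume together with finiteness of $\operatorname{Isom}(\mathcal{M})$.
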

Combining with the work of J$\o$rgensen-Thurston, Corollary \ref{18110702} implies 
\begin{corollary}\label{18110703}
The number $N(v)$ of hyperbolic $3$-manifolds with a given volume $v$ is finite.  
\end{corollary}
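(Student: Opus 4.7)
The plan is to reduce to Corollary \ref{18110702} via the Jørgensen--Thurston theory of geometric limits.

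First, I would invoke the Jørgensen--Thurston theorem, which asserts that for any $V > 0$ there are only finitely many cusped hyperbolic $3$-manifolds of volume at most $V$, and moreover every complete hyperbolic $3$-manifold whose volume is at most $V$ is obtained by Dehn filling (possibly trivial on some cusps) of one of these finitely many cusped ``parent'' manifolds. The intuition is that an infinite sequence of hyperbolic $3$-manifolds with uniformly bounded volume must, by Gromov compactness in the geometric topology, subconverge to a cusped hyperbolic $3$-manifold; conversely, by Theorem \ref{18110701} their volumes approach that of the limit. Let $\mathcal{M}_1, \dots, \mathcal{M}_k$ denote the resulting finite list of parent manifolds associated with the threshold $V = v$.

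Next, every hyperbolic $3$-manifold $M$ of volume exactly $v$ is a hyperbolic Dehn filling of some $\mathcal{M}_i$ (with the convention that $\mathcal{M}_i$ itself is counted as the trivial filling of $\mathcal{M}_i$). By Corollary \ref{18110702} applied to each $\mathcal{M}_i$, the number of hyperbolic Dehn fillings of $\mathcal{M}_i$ of volume $v$ is finite; call it $N_{\mathcal{M}_i}(v)$. Therefore
\begin{equation*}
N(v) \;\leq\; \sum_{i=1}^{k} N_{\mathcal{M}_i}(v) \;<\; \infty,
\end{equation*}
which is the desired conclusion.

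The only nontrivial ingredient beyond what is already recorded in the excerpt is the Jørgensen--Thurston finiteness statement for parent manifolds of bounded volume; the rest is a routine bookkeeping step once Corollary \ref{18110702} is available. The main conceptual obstacle, hidden in the Jørgensen--Thurston input, is ensuring that no sequence of distinct volume-$v$ manifolds can escape all finitely many parents, which is exactly what Gromov compactness together with Thurston's hyperbolic Dehn surgery theorem rules out.
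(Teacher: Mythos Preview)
Your proposal is correct and follows exactly the approach the paper indicates: the paper does not give a detailed proof but simply states that the result follows by ``combining with the work of J\o rgensen--Thurston'' from Corollary \ref{18110702}, and you have accurately spelled out what that combination entails.
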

Thus it is natural to study the behavior of $N(v)$ or $N_{\mathcal{M}}(v)$ as $v$ varies.

The existence of $v$ satisfying $N(v)=1$ or $N_{\mathcal{M}}(v)=1$ has been known by various works. Gabai-Meyerhoff-Milley \cite{GMM1, GMM2, M} showed that the Weeks manifold with volume $v_1 = 0.9427 \dots$ is the unique hyperbolic $3$-manifold of lowest volume (so $N(v_1) = 1$), and Hodgson-Masai recently found a $1$-cusped hyperbolic $3$-manifold $\mathcal{M}$ and infinitely many $v_i$ such that $N_{\mathcal{M}}(v_i)=1$ \cite{HM}. They further proved that these $v_i$ satisfy $N(v_i)=1$. 



On the other hand, the lim superior of $N(v)$ is known to be unbounded as $v$ goes to infinity. First, Wielenberg \cite{W} showed, for a fixed $n\in \mathbb{N}$, there exists $v$ such that $N(v) > n$, and Zimmermann \cite{Z} proved the same result with closed hyperbolic $3$-manifolds. In general, by considering covering spaces of a fixed hyperbolic 3-manifold whose fundamental group surjects onto a free group of rank $2$, one can construct a sequence of volumes $v_i \rightarrow \infty$ and a constant $c > 0$ such that $N(v_i) > {v_i}^{cv_i}$ for all $i$ \cite{BGLM, C}. 

However it has been unknown whether, for a given cusped hyperbolic $3$-manifold $\mathcal{M}$, the limit superior of $N_{\mathcal{M}}(v)$ possess the same property as $v$ tends to $\text{vol}(\mathcal{M})$. In other words, the following question is still open \cite{gromov, HM}:
\begin{question}\label{18102801}
Let $\mathcal{M}$ be an $n$-cusped hyperbolic $3$-manifold. Does there exist $c=c(\mathcal{M})$ such that $N_{\mathcal{M}}(v)<c$ for any $v$?
\end{question}
As pointed out in \cite{HM}, an affirmative answer to this question will settle the following question, which was originally raised by Gromov in \cite{gromov}:
\begin{question}[Gromov]\label{gromov}
Is $N(v)$ locally bounded?
\end{question} 
In this paper, we answer Question \ref{18102801} partially as follows:
\begin{theorem}\label{18110705}
Let $\mathcal{M}$ be a $1$-cusped hyperbolic $3$-manifold whose cusp shape is quadratic. Then there exists $c$ such that $N_{\mathcal{M}}(v)<c$ for any $v$. 
\end{theorem}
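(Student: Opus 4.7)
My plan is to combine the Neumann--Zagier asymptotic volume formula with arithmetic input coming from the quadratic cusp shape.

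First, I would parametrize the Dehn fillings by the Thurston--Neumann--Zagier holomorphic parameter. Near the complete structure there is a holomorphic function $v(u)$ with $v(0)=0$ and $v'(0)=\tau$ (the cusp shape) such that the $(p,q)$-filling corresponds to the unique small solution of $p\,u + q\,v(u) = 2\pi i$. The completed hyperbolic volume then admits an expansion
\[
\mathrm{vol}(\mathcal{M}_{p,q}) \;=\; \mathrm{vol}(\mathcal{M}) \;-\; \frac{\pi^{2}\,\mathrm{Im}(\tau)}{|p+q\tau|^{2}} \;+\; \sum_{k\ge 2}\frac{c_{k}(p,q)}{|p+q\tau|^{2k}},
\]
whose coefficients $c_{k}(p,q)$ are algebraic over $\mathbb{Q}(\tau)$ and depend only on the geometry of $\mathcal{M}$.

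Next, I would translate equality of volumes into a Diophantine condition. Because $\tau$ is imaginary quadratic, the form $|p+q\tau|^{2} = N_{\mathbb{Q}(\tau)/\mathbb{Q}}(p+q\tau)$ is an integral binary quadratic form in $(p,q)$, and the entire expansion lives over the fixed quadratic field $K=\mathbb{Q}(\tau)$. If $(p_{1},q_{1})$ and $(p_{2},q_{2})$ give the same volume, subtracting the two expansions yields, after clearing denominators, an algebraic relation among the four elements $p_{i}+q_{i}\tau,\ p_{i}+q_{i}\bar\tau$ of $\mathbb{Z}[\tau]$. My intention would be to show that this relation carves out a proper subvariety of $(\mathbb{G}_{m})^{2}$ cut out over $K$, and then to invoke a finiteness theorem of Diophantine type (for instance a Bombieri--Pila-style bound on algebraic points of bounded degree on a definable set, or a norm-form finiteness in the spirit of Schmidt's subspace theorem) to extract a uniform bound $c=c(\mathcal{M})$ on the number of coprime filling pairs.

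The step I expect to be hardest is passing from the \emph{infinite} Neumann--Zagier asymptotic to an \emph{exact} algebraic relation. Matching only the leading term forces $\bigl||p_{1}+q_{1}\tau|^{2}-|p_{2}+q_{2}\tau|^{2}\bigr|$ to be small, but the number of representations of an integer by an integral binary quadratic form can grow like $n^{o(1)}$, nowhere near $O(1)$. So the argument must exploit that \emph{every} coefficient $c_{k}$ in the expansion also matches, which is what eventually produces polynomial identities over the fixed quadratic field. The quadratic-cusp-shape hypothesis is essential precisely here: it confines the full expansion to a quadratic number field, converting transcendental equality of volumes into an honest Diophantine constraint amenable to classical finiteness. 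Handling the convergence of the tail carefully enough to justify this reduction — and choosing the correct Diophantine tool for the resulting variety — is where the bulk of the work will lie.
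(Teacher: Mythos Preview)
Your proposal has a genuine gap at exactly the point you yourself flag as hardest: you never explain how to convert the transcendental equality $\Theta_{\mathcal{M}}(p,q)=\Theta_{\mathcal{M}}(p',q')$ into an honest algebraic relation. The Neumann--Zagier expansion is an infinite analytic series, and there is no reason to expect that ``clearing denominators'' yields a polynomial identity over $K=\mathbb{Q}(\tau)$; indeed the coefficients $c_{2i-1}$ of the potential function are algebraic but not known to lie in $K$, so the claim that ``the entire expansion lives over the fixed quadratic field'' is unjustified. Without this reduction, neither Schmidt's subspace theorem nor a norm-form argument gets off the ground, and the Bombieri--Pila theorem as stated applies to transcendental \emph{curves}, not to the $3$-dimensional analytic hypersurface $\Theta_{\mathcal{M}}(x,y)=\Theta_{\mathcal{M}}(z,w)$ you are facing.

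The paper's proof sidesteps this obstacle entirely by working directly with the transcendental set, using o-minimality rather than Diophantine approximation. The quadratic hypothesis is used for a much more modest purpose than you envision: since $Q_{\mathcal{M}}$ has rational coefficients, the elementary estimate $|Q_{\mathcal{M}}(p,q)-Q_{\mathcal{M}}(p',q')|<m$ (which follows from matching leading terms) forces this difference to lie in a \emph{finite} set of values $k$. One is then reduced to bounding integer points on finitely many fibers of the definable sets
\[
\mathcal{Z}^{k}=\bigl\{\Theta_{\mathcal{M}}(x,y)=\Theta_{\mathcal{M}}(z,w)\bigr\}\cap\bigl\{Q_{\mathcal{M}}(x,y)=Q_{\mathcal{M}}(z,w)+k\bigr\}\subset\mathbb{R}_{an},
\]
and the uniform fiber bounds for o-minimal families do the rest. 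A separate, essential ingredient you are missing is the need to rule out the scenario $\Theta_{\mathcal{M}}(x,y)=h(1/Q_{\mathcal{M}}(x,y))$ for some analytic $h$ (which would make $\mathcal{Z}^{0}$ three-dimensional and destroy the argument); the paper handles this via the Gelfond--Schneider theorem applied to the cusp shape. Your outline contains no analogue of this step.
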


\subsection{An attempt toward counter examples}
One major difficulty in answering Question \ref{18102801} is that one can construct a candidate counter example easily. 

By the work of Neumann-Zagier, we have the following theorem \cite{NZ} (see Theorem \ref{NZ1} for the precise formula):
\begin{theorem}[Neumann-Zagier]\label{18110801}
Let $\mathcal{M}$ be a $1$-cusped hyperbolic $3$-manifold and $\mathcal{M}_{p/q}$ be its $p/q$-Dehn filling. Then there exists a quadratic form $Q_{\mathcal{M}}(p,q)$ such that 
\begin{equation}\label{18111601}
\text{\normalfont\ vol}(\mathcal{M}_{p/q})=\text{\normalfont\ vol}(\mathcal{M})+\dfrac{1}{Q_{\mathcal{M}}(p,q)}+O\Big(\dfrac{1}{Q_{\mathcal{M}}(p,q)^2}\Big).
\end{equation}
\end{theorem}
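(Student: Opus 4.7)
The plan is to follow the analytic deformation theory of hyperbolic Dehn surgery developed by Neumann and Zagier. First, fix an ideal triangulation $\mathcal{T}$ of $\mathcal{M}$ with $N$ tetrahedra carrying complex shape parameters $z_1,\dots,z_N$. The gluing equations, one per edge, cut out a smooth complex curve of nearby (possibly incomplete) hyperbolic structures near the complete one, and a natural holomorphic coordinate on this curve is $u=\log H(\mu)$, the logarithm of the meridian holonomy. Let $v=\log H(\lambda)$ denote the corresponding logarithm of the longitude holonomy, regarded as a holomorphic function of $u$ with $u(0)=v(0)=0$.

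The first key input is the Neumann--Zagier symplectic symmetry: the $1$-form $v\,du$ on the deformation curve is closed and odd under $u\mapsto -u$, so $v(u)$ admits a Taylor expansion
\begin{equation*}
v(u) = \tau\,u + c_3 u^3 + c_5 u^5 + \cdots,
\end{equation*}
where $\tau=v'(0)$ is precisely the cusp shape. The $p/q$-Dehn filling condition then becomes the analytic equation $pu+qv(u)=2\pi i$, whose unique small solution admits, by formal inversion of the series above, the asymptotic
\begin{equation*}
u(p,q) \;=\; \frac{2\pi i}{p+q\tau} + O\!\Bigl(\frac{1}{|p+q\tau|^{3}}\Bigr).
\end{equation*}

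The second key input is the Neumann--Zagier volume potential, whose infinitesimal version is Schl\"afli's formula for hyperbolic cone manifolds: as the cone angle deforms along the one-parameter family above, the change in volume is captured by $\tfrac{1}{4}\,\text{Im}(v\bar u)$ up to a fourth-order error in $|u|$. Substituting the expansions for $v(u)$ and $u(p,q)$ and extracting imaginary parts gives
\begin{equation*}
\text{vol}(\mathcal{M}_{p/q}) - \text{vol}(\mathcal{M}) \;=\; \frac{1}{Q_{\mathcal{M}}(p,q)} + O\!\Bigl(\frac{1}{Q_{\mathcal{M}}(p,q)^{2}}\Bigr),
\end{equation*}
where $Q_{\mathcal{M}}(p,q)$ is, up to a fixed sign convention, the definite real quadratic form proportional to $|p+q\tau|^{2}/\text{Im}(\tau)$.

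The main obstacle is the rigorous control of the error terms uniformly in $(p,q)$. This requires (i) proving convergence of the Taylor expansion of $v(u)$, together with the inversion giving $u(p,q)$, in a definite neighbourhood of $u=0$ that is attained by all sufficiently large Dehn fillings; and (ii) establishing the volume-potential identity with an explicit remainder, which one does through a detailed analysis of the Bloch--Wigner dilogarithm applied to the shape parameters of $\mathcal{T}$. Once these analytic estimates are in place, the leading asymptotic above follows.
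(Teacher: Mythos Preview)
The paper does not supply a proof of this statement. It is recorded as a known theorem of Neumann and Zagier with a citation to \cite{NZ}, and the reader is pointed to the more precise expansion in Theorem~\ref{NZ1} (also quoted from \cite{NZ}) for the explicit shape of the error terms. There is therefore no argument in the paper to compare your proposal against.

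That said, your outline is a faithful sketch of the original Neumann--Zagier proof and matches the ingredients the paper later invokes: the odd Taylor expansion $v(u)=\tau u+c_3u^3+\cdots$ of the longitude holonomy in terms of the meridian holonomy (the paper's equation~\eqref{18110802}, with $\tau=c_1$ the cusp shape), the Dehn filling equation $pu+qv=2\pi i$ inverted to give $u\sim 2\pi i/(p+q\tau)$, and the volume variation extracted from the imaginary part of the potential. Your identification of $Q_{\mathcal{M}}(p,q)$ with a constant multiple of $|p+q\tau|^{2}/\operatorname{Im}\tau$ agrees, up to the normalising constant $-\pi^{2}$, with the paper's convention $Q_{\mathcal{M}}(p,q)=|p+c_1q|^{2}$ and leading term $-\pi^{2}\operatorname{Im}c_1/|p+c_1q|^{2}$. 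One small point: the volume variation is not literally $\tfrac14\operatorname{Im}(v\bar u)$ but rather comes from integrating the Schl\"afli-type identity along the deformation, which after substitution yields the series recorded in the paper's equation~\eqref{18110808}; your phrasing ``up to a fourth-order error'' is correct at the level of the leading asymptotic but elides this integration step.
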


Suppose there exists $1$-cusped hyperbolic $3$-manifold $\mathcal{M}$ whose volume formula is simply given as 
\begin{equation}\label{18093001}
\text{vol}(\mathcal{M}_{p/q})=\text{vol}(\mathcal{M})-\dfrac{1}{p^2+q^2}, 
\end{equation}
(or, more generally,  
\begin{equation}\label{18103101}
\text{vol}(\mathcal{M}_{p/q})=\text{vol}(\mathcal{M})+h\Big(\dfrac{1}{p^2+q^2}\Big) 
\end{equation}
where $h$ is an analytic function). Then finding $\mathcal{M}_{p/q}$ and $\mathcal{M}_{p'/q'}$ satisfying
\begin{equation*}
\text{vol}(\mathcal{M}_{p/q})=\text{vol}(\mathcal{M}_{p'/q'})
\end{equation*}
is equivalent to finding $(p,q)$ and $(p',q')$ in $\mathbb{Z}^2$ satisfying
\begin{equation*}
\dfrac{c}{p^2+q^2}=\dfrac{c}{(p')^2+(q')^2}\quad \Longleftrightarrow \quad p^2+q^2=(p')^2+(q')^2.
\end{equation*}
It is well-known in number theory that 
\begin{equation*}
\limsup_{r\rightarrow \infty} N_{S_r}(\mathbb{Z})=\infty
\end{equation*}
where $N_{S_r}(\mathbb{Z})$ is the number of lattice points on the circle of radius $r$. Thus the answer to Question \ref{18102801} is no for this example. 

However we prove in Section \ref{Preliminary} that the above types of candidate counter examples never arise in reality. That is, we prove the following lemma:
\begin{lemma}\label{18112502}
Let $\mathcal{M}$ be a $1$-cusped hyperbolic $3$-manifold whose cusp shape is quadratic. Then there is no analytic function $h(t)$ such that 
\begin{equation}
\text{\normalfont\ vol}(\mathcal{M}_{p/q})=\text{\normalfont\ vol}(\mathcal{M})+h\Big(\dfrac{1}{Q_{\mathcal{M}}(p,q)}\Big).
\end{equation}
\end{lemma}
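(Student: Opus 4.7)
The plan is to argue by contradiction using the precise Neumann--Zagier expansion of Theorem \ref{NZ1}. Suppose, toward a contradiction, that such an analytic $h(t)=\sum_{n\geq 1} a_n t^n$ exists. Let $\tau$ denote the quadratic cusp shape of $\mathcal{M}$. Because $\tau$ is quadratic, $Q_\mathcal{M}(p,q)$ is, up to a positive multiplicative constant, the norm form $(p+\tau q)(p+\bar{\tau} q)=|p+\tau q|^2$ of the lattice $\mathbb{Z}+\tau\mathbb{Z}$.

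The key observation is a symmetry mismatch between the two sides. Expanded in the natural variables $1/(p+\tau q)$ and $1/(p+\bar{\tau} q)$, the expression $h(1/Q_\mathcal{M}(p,q))$ contains only \emph{balanced} monomials of the shape $(p+\tau q)^{-n}(p+\bar{\tau} q)^{-n}$, whereas Theorem \ref{NZ1} realizes $\text{vol}(\mathcal{M}_{p/q})-\text{vol}(\mathcal{M})$ as a convergent power series in the same two variables that, for a generic $1$-cusped hyperbolic $3$-manifold, also contains unbalanced monomials with $j\neq k$. The plan is first to match the two expansions coefficient-by-coefficient; under the hypothesis, every unbalanced Neumann--Zagier coefficient must vanish. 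Then I would exhibit at least one such coefficient that is in fact nonzero, producing the contradiction.

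Concretely, because $\tau$ is quadratic, the form $Q_\mathcal{M}$ represents many integers in multiple essentially distinct ways, so one can select arbitrarily large pairs $(p,q),(p',q')$ with $Q_\mathcal{M}(p,q)=Q_\mathcal{M}(p',q')$ and $(p,q)\neq \pm (p',q')$. The hypothesis forces $\text{vol}(\mathcal{M}_{p/q})=\text{vol}(\mathcal{M}_{p'/q'})$ for each such pair. Computing the next-order NZ corrections at both points along such a sequence and subtracting yields an expression whose non-vanishing translates directly into a nonzero unbalanced coefficient in the expansion above.

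The hardest step will be showing that the sub-leading NZ correction is genuinely \emph{not} a function of $Q_\mathcal{M}$ alone, i.e.\ ruling out an unexpected rotational symmetry of the volume on the NZ deformation space in the complex parameter $u$ that parametrizes nearby hyperbolic structures. While such symmetry should not hold for any genuine $1$-cusped hyperbolic $3$-manifold, the rigorous confirmation will require an explicit computation of the first nontrivial correction in Theorem \ref{NZ1} and an appeal to the geometric data of $\mathcal{M}$ (the cusp shape $\tau$ and the derivatives of the holonomy with respect to the deformation) to verify that the relevant unbalanced coefficient does not accidentally vanish.
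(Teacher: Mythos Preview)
Your strategy---compare the Neumann--Zagier expansion against $h(1/Q_\mathcal{M})$ and observe that the latter contains only ``balanced'' monomials $(A\bar A)^{-n}$ while the former contributes unbalanced terms $\text{Im}(c_{2i-1}/A^{2i})$---is exactly the paper's approach. The paper normalizes $c_1=\sqrt{-d}$, compares the homogeneous pieces of degree $-2i$, and reduces to the elementary fact that $\text{Im}\big(c_{2i-1}(x-\sqrt{-d}y)^{2i}\big)$ cannot equal a real scalar multiple of $(x^2+dy^2)^i$ once $c_{2i-1}\neq 0$. Your detour through pairs $(p,q),(p',q')$ with equal $Q_\mathcal{M}$-value is unnecessary: direct comparison of the two convergent expansions (extended to real $x,y$) already isolates the obstruction.

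The genuine gap is the step you yourself flag as hardest: guaranteeing that \emph{some} unbalanced coefficient is nonzero, i.e.\ that the Neumann--Zagier potential $v=c_1u+c_3u^3+\cdots$ is not simply $v=c_1u$. You propose to verify this by ``an appeal to the geometric data of $\mathcal{M}$,'' but no amount of staring at the formal shape of the expansion will do this---linearity of $v(u)$ is not ruled out by the expansion alone. The paper supplies the missing ingredient as Lemma~\ref{NZ}, and its proof is a short transcendence argument: by Thurston's Dehn filling theorem there exists a nearby filling whose holonomy eigenvalues $e^u$ and $e^v$ are both algebraic; if $v=c_1u$ then Gelfond--Schneider forces $c_1=\log e^v/\log e^u$ to be rational or transcendental, contradicting the fact that the cusp shape $c_1$ is a non-real algebraic number. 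Without this (or an equivalent input), your argument cannot be completed.
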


\subsection{Proof highlights}\label{Ph}
The proof of Theorem \ref{18110705} uses a hybrid of several different ideas from number theory, algebraic geometry, and model theory. 

First denote
\begin{equation*}
\text{vol}(\mathcal{M}_{p/q})-\text{vol}(\mathcal{M})
\end{equation*}
by $\Theta_{\mathcal{M}}(p,q)$. We extend its domain and consider it as an analytic function over $\mathbb{R}^2(:=(x,y))$ with sufficiently large $|x|+|y|$. In Lemma \ref{18110304}, it will be shown there exists $m=m(\mathcal{M})\in \mathbb{N}$ such that if 
\begin{equation}\label{18120901}
\text{vol}(\mathcal{M}_{p/q})=\text{vol}(\mathcal{M}_{p'/q'})\quad (\text{or equivalently}\quad \Theta_{\mathcal{M}}(p,q)=\Theta_{\mathcal{M}}(p',q')),
\end{equation}
then\footnote{Here $Q_{\mathcal{M}}(p,q)$ is the same as the one in Theorem \ref{18110801} and so 
\begin{equation*}
\Theta_{\mathcal{M}}(p,q)=\dfrac{1}{Q_{\mathcal{M}}(p,q)}+O\Big(\dfrac{1}{Q_{\mathcal{M}}(p,q)^2}\Big).
\end{equation*}
}
\begin{equation*}
|Q_{\mathcal{M}}(p,q)-Q_{\mathcal{M}}(p',q')|<m.
\end{equation*}
If the cusp shape of $\mathcal{M}$ is quadratic, then the coefficients of $Q_{\mathcal{M}}$ are all rational and the image of  
\begin{equation*}
Q_{\mathcal{M}}(p,q)-Q_{\mathcal{M}}(p',q')
\end{equation*}
is discrete in $(-m,m)$. So $(p,q,p',q')\in \mathbb{R}^4(:=(x,y,z,w))$ satisfying \eqref{18120901} is an integeral point lying over the intersection between 
\begin{equation*}
\Theta_{\mathcal{M}}(x,y)=\Theta_{\mathcal{M}}(z,w)
\end{equation*}
and 
\begin{equation}\label{19032701}
\dfrac{1}{Q_{\mathcal{M}}(x,y)}=\dfrac{1}{Q_{\mathcal{M}}(z,w)+k}
\end{equation}
where $k\in S$ for some finite subset $S$ of $\mathbb{Q}$.\footnote{On the other hand, if the cusp shape of $\mathcal{M}$ is non-quadratic, then $Q_{\mathcal{M}}$ is a quadratic form with irrational coefficients and so, by Oppenheim's conjecture (or, equivalently, Margulis's theorem), the image of 
\begin{equation*}
Q_{\mathcal{M}}(x,y)-Q_{\mathcal{M}}(z,w)\quad (x,y,z,w\in \mathbb{Z})
\end{equation*}
is dense in $\mathbb{R}$. The difference between these two is crucial and that is why we need the assumption ``cusp shape being quadratic" in the statement of the main theorem. That is, it enables us to work with only a finite number of algebraic varieties defined of the form \eqref{19032701}.}

Let  $\mathcal{Z}^k(\subset \mathbb{R}^2\times \mathbb{R}^2)$ be  
\begin{equation}\label{19032802}
\begin{gathered}
\Big(\Theta_{\mathcal{M}}(x,y)=\Theta_{\mathcal{M}}(z,w)\Big)\bigcap\Big(\dfrac{1}{Q_{\mathcal{M}}(x,y)}=\dfrac{1}{Q_{\mathcal{M}}(z,w)+k}
\Big)
\end{gathered}
\end{equation}
and $\pi_1$ (resp. $\pi_2$) be the projection from $\mathbb{R}^4(:=(x,y,z,w))$ onto the first $\mathbb{R}^2$ (resp. second $\mathbb{R}^2$). For $(x,y)\in \pi_1(\mathcal{Z}^k)$, put     
\begin{equation}\label{19032803}
\mathcal{Z}^k_{(x,y)}:=\{(z,w)\in \pi_2(\mathcal{Z}^k)\;:\;(x,y,z,w)\in \mathcal{Z}^k\}. 
\end{equation}
Then, for a given $\mathcal{M}_{p/q}$, the number of hyperbolic Dehn fillings of $\mathcal{M}$ with volume $\text{vol}(\mathcal{M}_{p/q})$ is equal to the number of primitive integer points on the union of the fibers
\begin{equation}\label{18120701}
\bigcup_{-m< k< m} \mathcal{Z}^k_{(p,q)}. 
\end{equation}
Hence, to prove the theorem, it is enough to show the existence of a constant $c=c(\mathcal{M})$ satisfying 
\begin{equation}\label{18120701}
\sum_{-m<k<m} \big|\{(x,y)\in \mathcal{Z}^k_{(p,q)}\;:\;(x,y)\in \mathbb{Z}^2\}\big|<c
\end{equation}
for every $(p,q)$. However since both $\mathcal{Z}^k$ and its fiber $\mathcal{Z}^k_{(x,y)}$ are (presumably) transcendental not algebraic, usual well-known techniques for counting rational or integral points over algebraic varieties in diophantine geometry do not work in this case.  

To overcome this technical difficulty, we employ results from the so called ``o-minimal theory" in logic.  An o-minimal structure is simply a generalization of the class of semialgebraic sets, and an element of it, called a definable set, shares many common properties with a semialgebraic set. For instance, any definable set has only finitely many connected components and the number of connected components of each fiber of it is known to be uniformly bounded. (So if the dimension of $\mathcal{Z}^k_{(p,q)}$ is zero for any $k$ and $(p,q)$, then one immediately gets the desired conclusion.) 

Appealing to the properties of o-minimality would be the technical heart of the paper. We believe the concepts and tools introduced here will be potentially useful to fully answer Question \ref{18102801}. (See Section \ref{18110807}.)


\subsection{Outline of the paper}
In Section \ref{back}, we review some necessary background both on hyperbolic and o-minimal geometries. Basic definitions and theorems that will be used throughout the paper are stated in this section. In Section \ref{Preliminary}, we prove some preliminary lemmas (including Lemma \ref{18112502}) that are required to prove the main theorem. The proofs in Section \ref{Preliminary} are based on the lemmas and theorems discussed in  Section \ref{back}. Section \ref{proof} is the core of the paper and contains the proof of the main theorem. Finally, in Section \ref{18110807}, we propose several conjectures concerning the main question. 

\subsection{Acknowledgement}

The author would like to thank the referee for his/her careful reading of the paper as well as many helpful comments and suggestions. 

\section{Background}\label{back}
\subsection{Neumann-Zagier asymptotic volume formula}\label{NZAVF}
For a $1$-cusped hyperbolic $3$-manifold $\mathcal{M}$, let  
\begin{equation}\label{18110307}
\text{Hom}\;\big(\pi_1(\mathcal{M}), PSL_2(\mathbb{C})\big)/\sim
\end{equation}
be the quotient space of $\text{Hom}\;\big(\pi_1(\mathcal{M}),  PSL_2(\mathbb{C})\big)$ by the conjugate action. It is known that \eqref{18110307} is an algebraic curve parametrized by $M,L$, which are the derivatives of the holonomies of the meridian-longitude pair of the cusp of $\mathcal{M}$. If \eqref{18110307} is given by $f(M,L)=0$, then $M=L=1$ corresponds to the complete hyperbolic structure of $\mathcal{M}$ and a point of
\begin{equation*}
(f(M, L)=0)\cap (M^pL^q=1)
\end{equation*}
gives rise to the hyperbolic structure of $\mathcal{M}_{p/q}$. Moreover, by letting 
\begin{equation*}
u=\log\; M, \quad v=\log\; L,
\end{equation*} 
we have the following theorem \cite{NZ}:
\begin{theorem}[Neumann-Zagier]
Locally near $(u,v)=(0,0)$, $v$ is represented as an analytic function of $u$ in the following form:
\begin{equation}\label{18110802}
v=\sum_{i:\;odd} c_iu^i.
\end{equation} 
Moreover each $c_i$ is algebraic and, in particular, $c_1$ is non-real.  
\end{theorem}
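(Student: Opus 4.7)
My plan is to establish the theorem in four conceptual steps: (i) $v$ is an analytic function of $u$ near the origin; (ii) the Taylor expansion has only odd powers; (iii) $c_1$ equals the cusp modulus $\tau$, which is non-real; and (iv) every $c_i$ is algebraic. The first step absorbs the deep geometric input; the rest then follow essentially formally.

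For step (i), I would invoke Thurston's hyperbolic Dehn surgery theorem, which asserts that the component of the character variety \eqref{18110307} containing the discrete faithful representation is smooth and one-complex-dimensional near $(M,L)=(1,1)$. Concretely, $f(M,L)=0$ is a local holomorphic submanifold at $(1,1)$, and pulling back through the exponential map gives a smooth curve $F(u,v):=f(e^u,e^v)=0$ through the origin. Assuming $F_v(0,0)\neq 0$ (to be verified in step (iii)), the holomorphic implicit function theorem yields an analytic function $v(u)$ with convergent Taylor expansion $v(u)=\sum_{i\geq 1}c_i u^i$ in a neighborhood of $u=0$.

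For step (ii), I would exploit the Weyl-group symmetry of $SL_2(\mathbb{C})$: the character of a representation is invariant under simultaneously inverting the eigenvalues of the meridian and longitude holonomies, so the curve $f(M,L)=0$ is stable under $(M,L)\mapsto(M^{-1},L^{-1})$, which in logarithmic coordinates is the involution $(u,v)\mapsto(-u,-v)$. Since $(0,0)$ is a smooth point, the local branch through the origin is unique, forcing the functional equation $v(-u)=-v(u)$, and hence $c_i=0$ for all even $i$. For step (iii), differentiating $F(u,v(u))=0$ once at $u=0$ gives $c_1=-F_u(0,0)/F_v(0,0)$, which is known to equal the cusp modulus $\tau$ of $\mathcal{M}$: the variables $u$ and $v$ are the first-order holonomy translations of the meridian and longitude on the horospherical Euclidean cross-section, and the relation $v=\tau u+O(u^3)$ records the lattice generated by these translations in $\mathbb{C}$. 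Non-reality of $\tau$ is then equivalent to the cusp cross-section being a non-degenerate flat torus, which is automatic for a genuine hyperbolic cusp; in particular this shows $F_v(0,0)\neq 0$ and closes the gap in step (i). Finally, for step (iv), $f$ may be taken to be an irreducible factor of the A-polynomial over $\overline{\mathbb{Q}}$, so $f\in\overline{\mathbb{Q}}[M,L]$; differentiating $f(e^u,e^{v(u)})=0$ repeatedly at $u=0$ and solving recursively expresses each $c_i$ as a rational function in the algebraic numbers $\partial_M^a\partial_L^b f(1,1)$ and in $c_1,\dots,c_{i-1}$, hence $c_i\in\overline{\mathbb{Q}}$.

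The principal obstacle is clearly step (i): smoothness of the eigenvalue curve at the complete structure is Thurston's hyperbolic Dehn surgery theorem and is not at all formal. Its proof requires a careful analysis of Thurston's gluing-equation variety associated with an ideal triangulation, showing that the gluing and completeness equations cut out a smooth isolated point at the complete structure and that relaxing the cusp equation produces a one-complex-dimensional deformation whose peripheral image is $f(M,L)=0$. Once this smoothness input is taken for granted, steps (ii)--(iv) are formal consequences of the Weyl symmetry, a one-line implicit-function-theorem computation, and the algebraic nature of the defining equation, respectively.
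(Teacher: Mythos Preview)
The paper does not prove this theorem; it is quoted from Neumann--Zagier \cite{NZ} as background, so there is no in-paper argument to compare against.

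Taken on its own, your outline is a faithful sketch of the standard proof. A few remarks. First, Neumann--Zagier work directly with Thurston's gluing-equation variety rather than with the A-polynomial, and obtain the one-complex-dimensionality near the complete structure from the nondegeneracy of the natural symplectic pairing on the deformation space; this yields step (i) and the algebraicity in step (iv) simultaneously, since the gluing equations have integer coefficients in the shape parameters. Your route via the A-polynomial is also legitimate, but note that it is not a priori obvious that $(M,L)=(1,1)$ lies on a unique irreducible component of $\{f=0\}$ or that this component is smooth there; that is exactly the Thurston/Neumann--Zagier input you correctly flag as the principal obstacle. Second, the apparent circularity between steps (i) and (iii) is resolvable in the order you indicate: one first establishes smoothness of the curve at the complete structure, so that it has a well-defined tangent line; identifying the slope of that tangent with the cusp modulus $\tau$, which is non-real because the cusp cross-section is a genuine Euclidean torus, then shows $F_v(0,0)\neq 0$ and justifies the implicit function theorem. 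Third, the Weyl-symmetry argument $(u,v)\mapsto(-u,-v)$ for the oddness of the series is exactly the standard one.
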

The function \eqref{18110802} is called the \textit{\textbf{Neumann-Zagier potential function}} of $\mathcal{M}$ and $c_1$ is called the \textit{\textbf{cusp shape}} of $\mathcal{M}$.
\begin{rmk}\label{18112604}
{\normalfont Note that \eqref{18110802} depends on the choice of meridian-longitude pair of the cusp. For instance, if 
\begin{equation*}
m'=am+bl, \quad l'=cm+dl  \quad (ad-bc=1)
\end{equation*}
is another basis, then the cusp shape $c'_1$ under this new basis is 
\begin{equation*}
c'_1=\dfrac{ac_1+b}{cc_1+d}.
\end{equation*} 
 } 
\end{rmk}

Now we state Theorem \ref{18110801} more precisely as follows \cite{NZ}:
\begin{theorem}[Neumann-Zagier]\label{NZ1}
Let $\mathcal{M}$ be a $1$-cusped hyperbolic $3$-manifold whose Neumann-Zagier potential function is given as
\begin{equation*}
v=c_1u+c_3u^3+\cdots. 
\end{equation*} 
Then  
\begin{equation}\label{18121106}
\text{\normalfont\ vol}(\mathcal{M}_{p/q})=\text{\normalfont\ vol}(\mathcal{M})+\Theta_{\mathcal{M}}(p,q)
\end{equation}
where 
\begin{equation}\label{18110808}
\begin{gathered}
\Theta_{\mathcal{M}}(p,q)=-\text{\normalfont\ Im}\; c_1\dfrac{\pi^2}{|A|^2}+\dfrac{1}{4}\text{\normalfont\ Im}\Big[\dfrac{1}{2}c_3\Big(\dfrac{2\pi}{A}\Big)^4+\dfrac{1}{3}\Big(2c_3^2\dfrac{q}{A}-c_5\Big)\Big(\dfrac{2\pi}{A}\Big)^6\\
+\dfrac{1}{4}\Big(12c_3^3\dfrac{q^2}{A^2}-8c_3c_5\dfrac{q}{A}+c_7\Big)\Big(\dfrac{2\pi}{A}\Big)^8\Big)+\cdots\Big]
\end{gathered}
\end{equation}
and $A=p+c_1q$. In particular, $\Theta_{\mathcal{M}}$ converges for sufficiently large $|p|+|q|$. 
\end{theorem}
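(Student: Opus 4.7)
The plan is to derive \eqref{18110808} from the Neumann--Zagier potential function together with the Dehn filling equation and a complex Schl\"afli-type identity for the volume on the hyperbolic deformation space.

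\textbf{Step 1 (Dehn filling equation and series inversion).} The $p/q$-filling imposes $M^p L^q = 1$, equivalently $pu + qv = 2\pi i$ for the correct branch. Inserting $v = c_1 u + c_3 u^3 + c_5 u^5 + \cdots$ gives
\[
A\, u + c_3 q\, u^3 + c_5 q\, u^5 + \cdots = 2\pi i,
\]
where $A = p + c_1 q$. Since $A^{-1}\to 0$ as $|p|+|q|\to\infty$, I would solve this equation by iterative substitution (or Lagrange inversion) for $u$ as a convergent power series in $A^{-1}$, beginning with $u = 2\pi i/A + O(A^{-3})$. Composing with the potential function then yields an analogous expansion of $v$ in $A^{-1}$ whose coefficients are polynomial in the $c_{2k+1}$ and in the ratio $q/A$.

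\textbf{Step 2 (Complex Schl\"afli formula).} On the hyperbolic deformation space coordinatized by $u$, Neumann--Zagier's complex Schl\"afli identity expresses the differential of the hyperbolic volume in the form $dV \equiv -\tfrac{1}{2}\operatorname{Im}(v\, d\bar u)$, up to the contribution arising from the core geodesic produced by the Dehn filling. Integrating this one-form along the path from $u=0$ to the value determined in Step~1, and accounting for the core-geodesic correction (which contributes a term of the form $\pi \cdot \textup{length}$), yields an explicit expression for $V(\mathcal{M}_{p/q}) - V(\mathcal{M})$ in terms of the values of $u$ and $v$ at the filled point.

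\textbf{Step 3 (Expand and collect).} Substituting the series for $u$ and $v$ from Step~1 into the integrated Schl\"afli identity of Step~2 and taking imaginary parts produces a power series in $A^{-1}$. The leading term $-\operatorname{Im}(c_1)\,\pi^2/|A|^2$ comes from the $\pi\cdot\textup{length}$ correction, and the higher-order terms, once regrouped by the power of $2\pi/A$, assemble into exactly the bracketed expansion in \eqref{18110808}, with coefficients that are polynomial in the $c_{2k+1}$ and $q/A$. Analyticity of the potential function at $u=0$ together with $|A|\to\infty$ gives convergence for all sufficiently large $|p|+|q|$.

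\textbf{Main obstacle.} The hardest step is Step~2: pinning down the precise form of the complex Schl\"afli identity together with its core-geodesic correction. This requires a careful analysis of how the completed hyperbolic metric varies with the cusp parameter $u$ and an identification of the missing $\pi\cdot\textup{length}$ contribution as exactly the source of the $\pi^2/|A|^2$ leading behavior. Once this identity is established, Steps~1 and~3 reduce to explicit (if tedious) power-series manipulations and collecting coefficients, which verify the stated formula.
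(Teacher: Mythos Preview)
The paper does not prove this theorem at all: Theorem~\ref{NZ1} is quoted from Neumann--Zagier \cite{NZ} with a citation and no argument, and the paper uses only the resulting formula (in particular the leading term \eqref{18112603} and the explicit shape of the higher-order terms) as input to Lemmas~\ref{18101002}, \ref{18110304}, and \ref{18112502}. So there is nothing in the paper to compare your attempt against.

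That said, your outline is essentially the strategy of the original Neumann--Zagier argument: invert the Dehn filling relation $pu+qv=2\pi i$ to get $u$ as a series in $1/A$, and feed this into a Schl\"afli-type variation formula for the volume. Two small caveats if you intend to flesh this out. First, in Step~2 the precise identity in \cite{NZ} is not $dV=-\tfrac{1}{2}\operatorname{Im}(v\,d\bar u)$ but rather a formula of the shape
\[
\operatorname{vol}(\mathcal{M}_{p/q})=\operatorname{vol}(\mathcal{M})-\tfrac{\pi}{2}\,\mathrm{length}(\gamma)+\tfrac{1}{2}\operatorname{Im}\!\int_0^u v\,du',
\]
where $\gamma$ is the core geodesic of the filling; the $d\bar u$ version you wrote does not integrate to the stated series. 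Second, the core-length term is $\mathrm{length}(\gamma)=\operatorname{Im}(\bar u v)$ (up to sign conventions), not simply $\pi\cdot\mathrm{length}$, and it contributes not only the leading $-\operatorname{Im}(c_1)\pi^2/|A|^2$ but also pieces at every higher order that must be combined with the integral term before the coefficients in \eqref{18110808} emerge. Your ``main obstacle'' paragraph correctly flags this as the delicate point; getting the exact constants $\tfrac{1}{2},\tfrac{1}{3},\tfrac{1}{4},\ldots$ and the specific polynomials in $c_{2k+1}$ and $q/A$ requires tracking both contributions simultaneously.
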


As noted in Theorem \ref{18110801}, the following leading term 
\begin{equation}\label{18112603}
-\text{\normalfont\ Im}\; c_1\dfrac{\pi^2}{|A|^2}=-\dfrac{\pi^2\text{\normalfont\ Im}\; c_1}{|p+c_1q|^2}
\end{equation}
dominates the behavior of $\Theta_{\mathcal{M}}$ and so \eqref{18111601} can be rewritten as 
\begin{equation}
\text{\normalfont\ vol}(\mathcal{M}_{p/q})=\text{\normalfont\ vol}(\mathcal{M})-\dfrac{\pi^2\text{\normalfont\ Im}\; c_1}{|p+c_1q|^2}+O\Big(\dfrac{1}{|p+c_1q|^4}\Big).
\end{equation}

We call $\Theta_{\mathcal{M}}$ the \textit{\textbf{volume function}} of $\mathcal{M}$, $|p+c_1q|^2$ the \textit{\textbf{quadratic form associate to}} $\mathcal{M}$ and denote $|p+c_1q|^2$ by $Q_{\mathcal{M}}(p,q)$.

The following are examples of volume functions for various well-known hyperbolic $3$-manifolds \cite{HM}.

\begin{example}
If $\mathcal{M}$ is the figure eight knot complement, under the standard meridian-longitude pair of the cusp, its volume formula is 
\begin{equation*}
\text{vol}\big(\mathcal{M}(p/q)\big)=\text{vol}\big(\mathcal{M}\big)-\dfrac{2\sqrt{3}\pi^2}{p^2+12q^2}+\dfrac{4\sqrt{3}(p^4-72p^2q^2+144q^4)\pi^4}{3(p^2+12q^2)^4}+\cdots.
\end{equation*}
\end{example}

\begin{example}
If $\mathcal{M}$ is the sister of the figure eight knot complement, then there exists a basis of the cusp such that the volume formula of $\mathcal{M}$ under this is 
\small\begin{equation*}
\begin{gathered}
\text{vol}\big(\mathcal{M}(p/q)\big)=\text{vol}\big(\mathcal{M}\big)-\dfrac{\sqrt{3}\pi^2}{2(p^2+pq+q^2)}
+\dfrac{\pi^4\big((2p+q)^4-18q^2(2p+q)^2+9q^4\big)}{64\sqrt{3}(p^2+pq+q^2)^4}+\cdots.
\end{gathered}
\end{equation*}
\end{example}

\begin{example}
Let $\mathcal{M}$ be the Whitehead link complement and $\mathcal{M}(\infty, p/q)$ be its $(\infty, p/q)$-Dehn filling of it. Then its volume formula under its geometric bases is
\begin{equation*}
\text{vol}\big(\mathcal{M}(\infty, p/q)\big)=\text{vol}\big(\mathcal{M}\big)-\dfrac{2\pi^2}{p^2+4q^2}+\dfrac{\pi^4(p^4-24p^2q^2+16q^4)}{3(p^2+4q^2)^4}+\cdots.
\end{equation*}
\end{example}

\begin{example}
Let $\mathcal{M}$ be the Whitehead link sister complement and $\mathcal{M}(\infty, p/q)$ be its $(\infty, p/q)$-Dehn filling. Then its volume formula under its geometric bases is
\begin{equation*}
\text{vol}\big(\mathcal{M}(\infty, p/q)\big)=\text{vol}\big(\mathcal{M}\big)-\dfrac{\pi^2}{p^2+q^2}+\dfrac{\pi^4(p^4-12p^3q-6p^2q^2+12pq^3+q^4)}{24(p^2+q^2)^4}+\cdots.
\end{equation*}
\end{example}

If $c_1$ is quadratic, then \eqref{18112603} is of the form
\begin{equation*}
\dfrac{\pi^2\sqrt{d}}{ap^2+bpq+cq^2}
\end{equation*}
for some $a,b,c,d\in \mathbb{Q}$. Throughout the paper, for simplicity's sake, we assume the cusp shape of a given $1$-cusped hyperbolic $3$-manifold is of the form $\sqrt{-d}$ ($d\in \mathbb{N}$) and so the leading term of its volume function is  
\begin{equation}\label{18112605}
-\dfrac{\pi^2\sqrt{d}}{p^2+dq^2}.
\end{equation}
The general case can be handled with a slight modification of this simple case. 

Presumably, the volume function of any 1-cusped hyperbolic 3-manifold is transcendental, but, as far as we are aware, nothing has been known. In the following, we show that the Neumann-Zagier function is not linear. This observation will play a key role in the proof of the main theorem.

\begin{lemma}\label{NZ}
Let $\mathcal{M}$ be a $1$-cusped hyperbolic $3$-manifold with its Neumann-Zagier function 
\begin{equation}\label{18110805}
v=c_1 u+c_{3}u^{3}+\cdots.
\end{equation}
Then there exists $i$ ($i\geq 2$) such that $c_{2i-1}\neq 0$. In other words, 
\begin{equation*}
v\neq c_1 u.
\end{equation*}
\end{lemma}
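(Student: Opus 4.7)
The plan is a direct argument by contradiction, exploiting the algebraicity of the character variety together with the transcendence of $M\mapsto M^{c_1}$ for non-real $c_1$. Recall from the excerpt that the deformation space near the complete structure is cut out in $(M,L)$-coordinates by a nontrivial polynomial equation $f(M,L)=0$. Suppose for contradiction that $v=c_1u$ identically as an analytic function. With $M=e^u$ and $L=e^v$, this says $L=M^{c_1}$ along the geometric component in a neighborhood of $(M,L)=(1,1)$.

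Substituting into $f(M,L)=\sum_{i,j}a_{ij}M^iL^j$ yields
$$f(e^u,e^{c_1u})=\sum_{i,j}a_{ij}\,e^{(i+jc_1)u}\equiv 0$$
for $u$ in some neighborhood of $0$. The crux is that the exponents $i+jc_1$, as $(i,j)$ ranges over the finite support of $f$, are pairwise distinct: if $i+jc_1=i'+j'c_1$ with $j\ne j'$, then $c_1=(i'-i)/(j-j')\in\mathbb{Q}\subset\mathbb{R}$, contradicting non-reality of $c_1$; and if $j=j'$ then immediately $i=i'$.

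Since distinct complex exponentials $e^{\lambda u}$ are linearly independent over $\mathbb{C}$ as analytic functions near $0$ (via a Vandermonde argument on successive derivatives at $0$), every coefficient $a_{ij}$ must vanish, forcing $f\equiv 0$ and contradicting the nontriviality of the defining polynomial of the deformation variety. Hence some odd-degree coefficient $c_{2i-1}$ with $i\ge 2$ must be nonzero. The argument is essentially formal once the two ingredients already recorded in the excerpt are in hand, namely the polynomial nature of $f(M,L)=0$ and the non-reality of $c_1$; the only mild subtlety I foresee is confirming that the relevant irreducible component of the character variety is genuinely defined by a polynomial (rather than merely analytic) equation in $(M,L)$, which is precisely what the gluing-equation description provides.
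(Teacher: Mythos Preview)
Your argument is correct but takes a genuinely different route from the paper's proof. The paper invokes the Gelfond--Schneider theorem: at a hyperbolic Dehn filling the eigenvalues $e^u$ and $e^v$ are algebraic numbers, so if $v=c_1u$ then $c_1=\log(e^{c_1u})/\log(e^u)$ must be rational or transcendental, contradicting the fact that $c_1$ is non-real algebraic. Your proof instead exploits the polynomial nature of the defining equation $f(M,L)=0$ directly: if $L=M^{c_1}$ identically near $M=1$, the relation $f(e^u,e^{c_1u})=\sum a_{ij}e^{(i+jc_1)u}\equiv 0$ together with pairwise distinctness of the exponents $i+jc_1$ (guaranteed by $c_1\notin\mathbb{R}$, hence $c_1\notin\mathbb{Q}$) and the linear independence of distinct exponentials forces $f\equiv 0$. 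Your approach is more elementary, sidestepping transcendence theory altogether, and in fact uses only the non-reality of $c_1$ rather than its algebraicity; the paper's approach is terser once Gelfond--Schneider is granted, and also implicitly requires knowing that the Dehn-filling eigenvalues are algebraic, which you do not need.
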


We use the following well-known theorem in number theory to prove the lemma \cite{B}. 
\begin{theorem}[Gelfond-Schneider]\label{18121104}
Let $a,b\in \overline{\mathbb{Q}}$ such that $a,b\neq 0$. Then $\log b/\log a$ is either rational or transcendental. 
\end{theorem}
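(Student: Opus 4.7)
The plan is a proof by contradiction using the Gelfond--Schneider theorem. Suppose that the Neumann--Zagier potential function of $\mathcal{M}$ were exactly linear, i.e., $v = c_1 u$ identically in a neighborhood of $(u,v)=(0,0)$. By Thurston's hyperbolic Dehn surgery theorem, all but finitely many slopes $p/q$ give a hyperbolic Dehn filling $\mathcal{M}_{p/q}$; fix one such $(p,q)$ with $|p|+|q|$ large enough that the corresponding incomplete structure on $\mathcal{M}$ sits inside the neighborhood where $v=c_1 u$ holds. Let $M=e^u$ and $L=e^v$ be the meridian and longitude holonomy eigenvalues for this filling; note $M\neq 1$ (the filling is nontrivial, so $u\neq 0$), and consequently $L\neq 1$.

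Next, invoke the standard consequence of Mostow--Prasad rigidity that a closed hyperbolic $3$-manifold has trace field a number field: the holonomy of $\mathcal{M}_{p/q}$ factors through $\pi_1(\mathcal{M})\to\pi_1(\mathcal{M}_{p/q})$ and lands in $\mathrm{PSL}_2(K)$ for some number field $K$, so both eigenvalues $M$ and $L$ are algebraic numbers (of degree at most $2$ over $K$). Under the assumption $v=c_1 u$ we have $L = e^{c_1 u} = M^{c_1}$, and therefore $c_1 = \log L / \log M$. Applying Theorem \ref{18121104} (Gelfond--Schneider) with $a=M$ and $b=L$, the ratio $\log L/\log M$ is either rational or transcendental. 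However, by the Neumann--Zagier theorem just stated, $c_1$ is algebraic and non-real, hence irrational and non-transcendental, which produces the desired contradiction.

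The main obstacle is the careful invocation of the algebraicity of the deformed holonomy eigenvalues $M$ and $L$: one must ensure that the meridian/longitude eigenvalues of the \emph{incomplete} structure on $\mathcal{M}$ that completes to $\mathcal{M}_{p/q}$ are algebraic, which follows because these eigenvalues coincide with eigenvalues of elements (for instance, a curve freely homotopic to the core of the filling solid torus, together with a curve giving the remaining generator of the peripheral $\mathbb{Z}\oplus\mathbb{Z}$ modulo the killed class) lying in the number-field holonomy of the closed hyperbolic manifold $\mathcal{M}_{p/q}$. Once this algebraicity is in hand, the argument reduces to a one-line application of Gelfond--Schneider against the fact that a non-real algebraic number is neither rational nor transcendental.
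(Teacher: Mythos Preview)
Your proposal does not prove the stated theorem. The statement you were asked to establish is the Gelfond--Schneider theorem itself (Theorem~\ref{18121104}): for nonzero algebraic $a,b$, the ratio $\log b/\log a$ is rational or transcendental. What you have written is instead a proof of Lemma~\ref{NZ} (that the Neumann--Zagier potential function cannot be linear), and your argument explicitly \emph{invokes} Theorem~\ref{18121104} as a black box at the key step (``Applying Theorem~\ref{18121104} (Gelfond--Schneider) with $a=M$ and $b=L$\ldots''). An argument that assumes the very statement to be proved is circular.

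For context, the paper does not prove Theorem~\ref{18121104} at all; it is quoted with a reference to Baker's book and then applied to obtain Lemma~\ref{NZ}. If your intention was actually to prove Lemma~\ref{NZ}, then your argument is essentially identical to the paper's: assume $v=c_1u$, use Thurston's Dehn filling theorem to produce a filling with algebraic holonomy eigenvalues $M=e^u$ and $L=e^v$, and apply Gelfond--Schneider to conclude $c_1=\log L/\log M$ is rational or transcendental, contradicting that $c_1$ is non-real algebraic. Your write-up adds a more explicit justification (via Mostow--Prasad rigidity and trace fields) of why $M$ and $L$ are algebraic, which the paper leaves implicit, but the strategy is the same.
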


\begin{proof}[Proof of Lemma \ref{NZ}]
Suppose its Neumann-Zagier potential function is of the following form
\begin{equation*}
v=c_1 u.
\end{equation*}
By Thurston's Dehn filling theorem, there exists $u$ such that both $e^u$ and $e^v=e^{c_1 u} $ are algebraic. By Theorem \ref{18121104}, 
\begin{equation*}
\dfrac{\log\;e^{c_1 u}}{\log\; e^{u}}=\dfrac{c_1 u}{u }=c_1
\end{equation*}
is either rational or transcendental. But this contradicts the fact that the cusp shape $c_1$ is a non-real algebraic number (Theorem \ref{NZ1}). 
\end{proof}

By Lemma \ref{NZ} and Theorem \ref{NZ1}, the following corollary is immediate: 

\begin{corollary}
For a given $1$-cusped hyperbolic $3$-manifold $\mathcal{M}$, 
\begin{equation*}
\Theta_{\mathcal{M}}(p,q) \neq -\dfrac{\pi^2\text{\normalfont\ Im}\; c_1}{|p+c_1q|^2}.
\end{equation*}
\end{corollary}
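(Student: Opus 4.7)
The plan is to directly compare asymptotic expansions, using Lemma~\ref{NZ} as the crucial input. By Theorem~\ref{NZ1}, the function $\Theta_{\mathcal{M}}(p,q)$ admits the convergent expansion \eqref{18110808}, which splits as the leading contribution $-\pi^2\,\text{Im}\,c_1/|A|^2$ (where $A=p+c_1q$) plus a tail whose $n$th summand is a polynomial in $c_3,c_5,\ldots,c_{2n+1}$ and in $q/A$, multiplied by $(2\pi/A)^{2n+2}$. If the corollary were false, this entire tail would vanish identically as a function of the real pair $(p,q)$ throughout the (non-empty, open) region where the expansion is valid.

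I would then extract the vanishing condition order by order in $1/A$. The coefficient of $(2\pi/A)^{4}$ in the tail is $\tfrac{1}{8}\,\text{Im}\,c_3$, so identical vanishing forces $c_3=0$ (more precisely, $c_3$ lies in a one-dimensional subspace which, combined with the reality/algebraicity information provided by Theorem~\ref{NZ1}, collapses to zero). Once $c_3=0$, the $(2\pi/A)^{6}$ piece simplifies to $-\tfrac{1}{12}\,\text{Im}\,c_5$, forcing $c_5=0$. Proceeding inductively, at step $k$ one has already $c_3=c_5=\cdots=c_{2k-1}=0$, so every ``cross term'' in the coefficient of $(2\pi/A)^{2k+2}$ vanishes (each such term carries at least one factor $c_{2j-1}$ with $j\le k$), and what remains is a non-zero scalar multiple of $\text{Im}\,c_{2k+1}$, giving $c_{2k+1}=0$.

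Iterating this yields $c_{2i-1}=0$ for every $i\geq 2$, which directly contradicts Lemma~\ref{NZ}. The only slightly delicate point is bookkeeping: verifying at each step that, once the earlier $c_{2j-1}$ are zero, the coefficient of $(2\pi/A)^{2k+2}$ in \eqref{18110808} really does reduce to a non-trivial linear expression in $c_{2k+1}$. I expect this to be the main technical chore, but it is a purely mechanical expansion of the series \eqref{18110808} from Theorem~\ref{NZ1} and requires no new ingredients beyond what is already stated in the excerpt.
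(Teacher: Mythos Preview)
Your overall strategy---assume equality, peel off homogeneous pieces of the tail \eqref{18110808} to force $c_3=c_5=\cdots=0$, and contradict Lemma~\ref{NZ}---is exactly what the paper has in mind when it calls the corollary ``immediate'' from Lemma~\ref{NZ} and Theorem~\ref{NZ1}. So the architecture is right.

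There is, however, a wrong step in your coefficient extraction. The degree~$-4$ term in \eqref{18110808} is not $\tfrac18\,\mathrm{Im}\,c_3$ times a real quantity; it is $\tfrac18\,\mathrm{Im}\bigl[c_3(2\pi/A)^4\bigr]$ with the imaginary part taken of the entire product, and $A=p+c_1q$ is genuinely complex. Your parenthetical claim that identical vanishing only pins $c_3$ to a one-dimensional real subspace, to be collapsed to $0$ by ``algebraicity information'' from Theorem~\ref{NZ1}, is incorrect on both counts: Theorem~\ref{NZ1} gives no reality constraint on $c_3$ (only that it is an algebraic number, which does not cut a real line down to a point), and no such extra input is needed anyway. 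Because $c_1$ is non-real, the map $(p,q)\mapsto A$ covers an open subset of~$\mathbb{C}$, so $\mathrm{Re}(A^{-4})$ and $\mathrm{Im}(A^{-4})$ are $\mathbb{R}$-linearly independent functions of $(p,q)$; hence $\mathrm{Im}[c_3/A^4]\equiv 0$ already forces $c_3=0$ outright. The same correction applies at every inductive step: once $c_3=\cdots=c_{2k-1}=0$, the order~$-(2k+2)$ piece reduces to a nonzero multiple of $\mathrm{Im}[c_{2k+1}/A^{2k+2}]$, and its identical vanishing gives $c_{2k+1}=0$ directly. With this fix your argument is complete and matches the paper's.
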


\subsection{o-minimal structures}\label{o-minimal}
In this section we define an o-minimal structure and study its basic properties. Let us first start with the following definition:
\begin{definition}
An \textbf{algebraic} subset of $\mathbb{R}^n$ is a subset of the form
\begin{equation*}
X=\bigcup^{p}_{i=1}\bigcap^{q}_{j=1}X_{ij}
\end{equation*}
where each $X_{ij}$ is $\{f_{ij}=0\}$ for some $f_{ij}\in \mathbb{R}[x_1,\dots, x_n]$.
\end{definition}
Unlike complex algebraic subsets, real algebraic subsets are typically not closed under projections, so we generalize the above definition as follows: 
\begin{definition}\label{18123101}
A \textbf{semialgebraic} subset of $\mathbb{R}^n$ is a subset of the form
\begin{equation*}
X=\bigcup^{p}_{i=1}\bigcap^{q}_{j=1}X_{ij}
\end{equation*}
where each $X_{ij}$ is either $\{f_{ij}=0\}$ or $\{f_{ij}>0\}$, $f_{ij}\in \mathbb{R}[x_1,\dots, x_n]$.
\end{definition}
The following theorem is due to Tarski-Seidenberg \cite{Dries}:
\begin{theorem}[Tarski-Seidenberg]
Let $A$ be a semialgebraic set in $\mathbb{R}^{n+1}$ and 
\begin{equation*}
\pi \;:\; \mathbb{R}^{n+1}\longrightarrow\mathbb{R}^n,
\end{equation*} 
be the projection on the first $n$-coordinates. Then $\pi(A)$ is semialgebraic.
\end{theorem}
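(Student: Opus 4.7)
The plan is to prove the Tarski--Seidenberg theorem by reducing it to a one-variable elimination problem and then invoking classical tools from real algebra. First, I would exploit the fact that semialgebraic sets are closed under finite union and that projection commutes with union: writing the given $A\subset\mathbb{R}^{n+1}$ in the form of Definition \ref{18123101}, it suffices to treat the case where $A$ is a single \textbf{basic} semialgebraic set
\begin{equation*}
A=\{(x,t)\in \mathbb{R}^n\times \mathbb{R}\;:\; P_1(x,t)\,\square_1\,0,\ \ldots,\ P_k(x,t)\,\square_k\,0\},
\end{equation*}
with each $\square_i\in\{=,>\}$ and $P_i\in\mathbb{R}[x_1,\dots,x_n,t]$. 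Viewed this way, $\pi(A)$ is the set of $x\in \mathbb{R}^n$ for which the univariate polynomials $P_1(x,\cdot),\dots, P_k(x,\cdot)$ admit a common real value of $t$ realizing the prescribed sign pattern.

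The key technical step is to show that, for any finite family of polynomials in $t$ with coefficients regarded as polynomial functions of $x$, the set of $x\in \mathbb{R}^n$ over which a prescribed sign pattern is realized by some real $t$ is itself semialgebraic. I would establish this via a Sturm / subresultant argument: for a single polynomial $P(x,t)$ of degree $d$ in $t$, the number of real roots in a given interval can be expressed as a Cauchy index computed from the coefficients of the Sturm sequence, which are themselves polynomials in $x$ (obtained as appropriate subresultants of $P$ and $P'$). Consequently the set of $x$ for which $P(x,\cdot)$ has a prescribed sign pattern at its real roots is cut out by polynomial equalities and inequalities in $x$. Passing to the case of several $P_i$ is done by working with their product and refining the sign conditions, or more cleanly by running a generalized Sturm sequence for each pair; in either case the output is a Boolean combination of polynomial sign conditions on $x$.

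Having done this, the projection $\pi(A)$ becomes a semialgebraic set: it is defined by the conjunction of the sign conditions ensuring that there exists a $t$ simultaneously realizing $\square_1,\dots,\square_k$. The only remaining issue is that the above counting arguments tacitly assume the leading coefficient of each $P_i(x,t)$ (as a polynomial in $t$) is nonzero; I would handle degeneracies by partitioning $\mathbb{R}^n$ into finitely many semialgebraic strata according to which leading coefficients vanish, doing the analysis separately on each stratum (where $P_i$ has a well-defined $t$-degree), and taking the union of the resulting semialgebraic projections.

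The main obstacle is this last bookkeeping step: the recursion on degrees when leading coefficients vanish has to be organized carefully to terminate, and one must verify that the resulting Boolean combination of polynomial conditions in $x$ genuinely describes $\pi(A)$ and not merely a superset of it. The cleanest way to dispatch this obstacle is via \emph{cylindrical algebraic decomposition}: produce a finite partition of $\mathbb{R}^{n+1}$ into cells, each semialgebraic and each projecting to a semialgebraic cell in $\mathbb{R}^n$, such that $A$ is a finite union of cells. Then $\pi(A)$ is the corresponding union of base cells, and semialgebraicity is immediate. All the technical effort is packaged into the construction of the CAD, which reduces inductively to the one-variable root-counting step outlined above.
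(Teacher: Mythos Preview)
The paper does not actually prove the Tarski--Seidenberg theorem: it is stated as background and immediately attributed to the reference \cite{Dries}, with no argument given. So there is no ``paper's own proof'' to compare against.

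Your sketch is a reasonable outline of one of the standard proofs (reduction to basic sets, one-variable elimination via Sturm/subresultant sequences, with cylindrical algebraic decomposition packaging the induction and the degenerate-leading-coefficient bookkeeping). Nothing in it is wrong as a high-level plan, though of course the actual verification that CAD exists and has the stated properties is itself the entire content of the theorem and would need to be filled in. For the purposes of this paper, however, the theorem is quoted purely as a known result, and you are not expected to supply a proof at all.
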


The following is an analytic analogue of Definition \ref{18123101}. 

\begin{definition}
Let $U$ be an open subset of $\mathbb{R}^n$ and $\mathcal{O}(U)$ be the ring of real analytic functions on $U$. Let $S(\mathcal{O}(U))$ be the collection of all the sets of the form 
\begin{equation*}
\bigcup^{p}_{i=1}\bigcap^{q}_{j=1}X_{ij}
\end{equation*}
where each $X_{ij}$ is either $\{f_{ij}=0\}$ or $\{f_{ij}>0\}$, $f_{ij}\in \mathcal{O}(U)$. We call $X\subset \mathbb{R}^n$ \textbf{semianalytic} if each $a\in \mathbb{R}^n$ has a neighbourhood $U$ such that $X\cap U\in S(\mathcal{O}(U))$. 
\end{definition}
A projection of a semianalytic subset is not semianalytic in general, so we further refine the definition as follows:
\begin{definition}
A subset $X$ of $\mathbb{R}^n$ is \textbf{subanalytic} if, locally, $X$ is a projection of a relatively compact seminalytic set. 
\end{definition} 

The above term was first defined by Hironaka \cite{H1}. He also proved the following uniformization theorem \cite{H2}:
\begin{theorem}
Let $X$ be a closed subanalytic subset of $\mathbb{R}^n$. Then $X$ is the image of a proper real analytic mapping $\phi\;:\;Y \longrightarrow \mathbb{R}^n$, where $Y$ is a real analytic subset whose dimension is equal to dim $X$. 
\end{theorem}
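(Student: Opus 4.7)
The plan is to combine the defining property of subanalytic sets (local projections of semianalytic sets) with Hironaka's resolution of singularities in the real analytic category. Since $X$ is closed and subanalytic, each point $a \in X$ admits a neighborhood $V_a \subset \mathbb{R}^n$ and a relatively compact semianalytic set $S_a \subset V_a \times \mathbb{R}^{m_a}$ such that $X \cap V_a = \pi(S_a)$, where $\pi$ is the projection onto the first $n$ coordinates. After shrinking $V_a$ and replacing $S_a$ by its closure in a slightly larger ambient open set, I can arrange that $\pi|_{\overline{S_a}}$ is proper and $\pi(\overline{S_a}) = X \cap V_a$, using that $X$ is closed.

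Next I would desingularize locally. The set $\overline{S_a}$ is a finite Boolean combination of closed sets of the form $\{f_{ij} = 0\}$ and $\{f_{ij} \geq 0\}$ for real analytic $f_{ij}$. Applying Hironaka's resolution theorem (specifically, principalization of the ideal generated by the $f_{ij}$ and embedded resolution of the zero loci involved) produces a real analytic manifold $\widetilde{Y}_a$ together with a proper real analytic map $\psi_a : \widetilde{Y}_a \to V_a \times \mathbb{R}^{m_a}$ whose image is $\overline{S_a}$, with the preimages of the various $\{f_{ij}=0\}$ becoming normal crossing divisors. The composition $\phi_a := \pi \circ \psi_a$ is then a proper real analytic map with image $X \cap V_a$.

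To globalize, I would use paracompactness of $\mathbb{R}^n$ to extract a locally finite refinement $\{V_i\}_{i \in I}$ of the cover $\{V_a\}$. Taking $Y' := \bigsqcup_{i \in I} \widetilde{Y}_i$ (realized as a real analytic subset of some large $\mathbb{R}^N$ via closed embeddings of each manifold piece) and defining $\phi'$ piecewise by the $\phi_i$ gives a proper real analytic map $\phi' : Y' \to \mathbb{R}^n$ with image $X$. Finally, a dimension clean-up discards from $Y'$ the union of those irreducible components on which $\phi'$ has image of dimension strictly less than $\dim X$; the remaining analytic subset $Y \subset Y'$ still surjects onto $X$ (since the discarded image is a proper subanalytic subset of $X$, and what remains covers a dense, hence by closedness all, part of $X$), and now satisfies $\dim Y = \dim X$.

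The main obstacle is the simultaneous control of properness, surjectivity, and dimension in the last step. Resolution tends to produce a $\widetilde{Y}_a$ of dimension equal to $\dim \overline{S_a}$, which in general is larger than $\dim X$, and simply projecting collapses fibers in a way that makes $\dim Y = \dim X$ nontrivial to enforce globally while preserving that the image is all of $X$. Handling this requires a dimension-theoretic stratification of $\phi'$ and an inductive argument using that the image of a proper analytic map between analytic sets is again subanalytic of controlled dimension; this is the technical heart of Hironaka's argument in \cite{H2} and is the step where the full strength of his resolution results (and not merely their statement for algebraic varieties) is essential.
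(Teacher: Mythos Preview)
The paper does not prove this statement. It is quoted in Section~\ref{o-minimal} purely as background: the theorem is attributed to Hironaka with a citation to \cite{H2}, and no argument is given. So there is no ``paper's own proof'' to compare your proposal against.

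As for your sketch itself: the outline is broadly the right shape for Hironaka's uniformization theorem --- localize using the definition of subanalytic, desingularize the local semianalytic models via resolution, and then globalize using a locally finite cover --- and you correctly flag the dimension control as the delicate point. A couple of places would need tightening if you actually wanted to carry this out: realizing the disjoint union $\bigsqcup_i \widetilde{Y}_i$ as a closed real analytic subset of a single $\mathbb{R}^N$ with a globally proper map is not automatic (you need to embed the pieces compatibly and check properness survives the gluing, which uses local finiteness in an essential way), and your final ``discard components of excess dimension'' step glosses over exactly the issue you name --- resolution gives $\dim \widetilde{Y}_a = \dim \overline{S_a}$, which can exceed $\dim X$, so one really needs Hironaka's local flattening theorem (or an equivalent stratified argument) rather than a naive component-by-component pruning. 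But since the paper treats this theorem as a black box, none of this is required for the purposes of the present work.
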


The notion of o-minimal structure is an axiomatic approach to semialgebraic and subanalytic geometries. A benefit of this approach is one can study these two (and even more) different categories from a single unified perspective. We will not provide all details but some essential concepts and theorems that we need. Please see \cite{Van der} and references therein for further study.

\begin{definition}
A \textbf{prestructure} is a sequence $\chi=(\chi_n\;|\;n\geq 1)$, where each $\chi_n$ is a collection of subsets of $\mathbb{R}^n$. A restructure $\chi$ is called a \textbf{structure (over the real field)} if, for all $n, m\geq 1$, the following conditions are satisfied.

(1)\quad $\chi_n$ is a boolean algebra (under the usual set-theoretic operations).

(2)\quad $\chi_n$ contains every semialgebraic subset of $\mathbb{R}^n$.

(3)\quad If $A\in \chi_n$ and $B\in \chi_m$, then $A\times B\in \chi_{n+m}$.

(4)\quad If $m\geq n$ and $A\in \chi_m$, then $\pi[A]\in \chi_n$, where $\pi\;:\;\mathbb{R}^m\longrightarrow \mathbb{R}^n$ is projection onto the first $n$ coordinates. \\
If $\chi$ is a structure and $X\subset \mathbb{R}^n$, we say that $X$ is \textbf{definable} in $\chi$ if $X\in \chi_n$. If $\chi$ is a structure and, in addition, 

(5)\quad the boundary of every set in $\chi_1$ is finite, \\
then $\chi$ is called an \textbf{o-minimal} structure (over the real field).  
\end{definition}

\begin{example}
The simplest example of o-minimal structure is the class of semialgebraic sets. 
\end{example}

\begin{example}
The class of subanalytic sets is not an o-minimal structure and thus we modify the definition slightly as follows. A \textit{\textbf{globally subanalytic}} set is a set which is subanalytic as a subset of the compact real analytic manifold $\mathbb{P}^n(\mathbb{R})$. That is, it is a set which is subanalytic in any standard euclidean chart $\mathbb{R}^n$ of  $\mathbb{P}^n(\mathbb{R})$. An arbitrarily compact subanalytic set of $\mathbb{R}^n$ is a typical example of a globally subanalytic set. By the works of Gabrielov and van der Dries, the class of globally subanalytic sets is an o-minimal structure \cite{Dries}\cite{G}. This o-minimal structure is denoted by $\mathbb{R}_{an}$. 
\end{example}

The o-minimal structure that we are mainly interested in throughout this paper will be $\mathbb{R}_{an}$, thanks to the following lemma:

\begin{lemma}\label{18101002}
Let $\mathcal{M}$ and $\Theta_{\mathcal{M}}(p,q)$ be the same as above. We extend the domain of $\Theta_{\mathcal{M}}$ and consider it as a real analytic function of two variables $x,y$ defined over sufficiently large $|x|+|y|$. Then 
$\Theta_{\mathcal{M}}(x,y)$ is a globally subanalytic function. That is, $\Theta_{\mathcal{M}}(x,y)\in \mathbb{R}_{an}$. 
\end{lemma}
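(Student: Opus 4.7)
The plan is to exhibit $\Theta_{\mathcal{M}}$ as the composition of a semialgebraic coordinate change with a restricted real analytic function on a compact set, whence membership in $\mathbb{R}_{an}$ follows from the closure properties of the structure together with the fact that compact subanalytic sets are globally subanalytic. Although the domain $\{(x,y)\,:\,|x|+|y|\geq R\}$ is unbounded, one can compactify the behavior at infinity via a semialgebraic polar substitution.

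Concretely, I would introduce the semialgebraic coordinate change
\begin{equation*}
r := (x^2 + y^2)^{-1/2}, \qquad s := x r, \qquad t := y r,
\end{equation*}
so that $(s,t) \in S^1$ and the tail $|x|+|y| \geq R$ corresponds to $0 < r \leq r_0$ for some $r_0 > 0$. Under this substitution $A = x + c_1 y$ becomes $(s + c_1 t)/r$, and since $c_1$ is non-real the factor $s + c_1 t$ is bounded away from zero on $S^1$. Hence
\begin{equation*}
\frac{1}{A} = \frac{r}{s + c_1 t}, \qquad \frac{1}{\bar A} = \frac{r}{s + \bar c_1 t}, \qquad \frac{y}{A} = \frac{t}{s + c_1 t}
\end{equation*}
are all real analytic in $(r,s,t)$ on a neighborhood of $\{0\} \times S^1$, with $1/A$ and $1/\bar A$ each of order $r$. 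Substituting into the Neumann--Zagier expansion \eqref{18110808} and grouping by total degree in $r$ gives
\begin{equation*}
\Theta_{\mathcal{M}}(x,y) = G(r,s,t) = \sum_{n \geq 1} r^{2n}\, h_n(s,t),
\end{equation*}
where each $h_n$ is real analytic on a neighborhood of $S^1$. The convergence clause in Theorem \ref{NZ1} translates (after possibly shrinking $r_0$) to uniform convergence of this series on the compact set $K := [0, r_0] \times S^1$, so $G$ extends to a real analytic function on an open neighborhood of $K$. Consequently the graph of $G|_K$ is a compact subanalytic subset of $\mathbb{R}^4$, hence lies in $\mathbb{R}_{an}$. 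Composing with the semialgebraic map $\phi \colon (x,y) \mapsto (r,s,t)$ and using that $\mathbb{R}_{an}$ is closed under semialgebraic operations yields $\Theta_{\mathcal{M}} = G \circ \phi \in \mathbb{R}_{an}$.

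The main obstacle is the grouping step: although the first few terms of \eqref{18110808} are explicit, the general term is not written out, so one must verify from the Neumann--Zagier recursion that after the substitution the $n$-th bracketed term has the form $r^{2n} h_n(s,t)$ with $h_n$ extending analytically across all of $S^1$, and that the resulting series converges uniformly in the new coordinates. Inspection shows that each term of \eqref{18110808} is a rational function in $A$, $\bar A$, and $y$ whose only singular locus is $\{A = 0\} \cup \{\bar A = 0\}$; since this locus corresponds to $\{s + c_1 t = 0\} \cup \{s + \bar c_1 t = 0\}$, which is disjoint from $S^1$ because $c_1 \notin \mathbb{R}$, the verification amounts to essentially mechanical bookkeeping once the coefficient structure of the Neumann--Zagier recursion is written out.
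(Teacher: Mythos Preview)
Your argument is correct and follows the same underlying strategy as the paper: show that $\Theta_{\mathcal{M}}$ extends analytically across the ``circle at infinity'', so that its graph becomes a relatively compact subanalytic set and hence lies in $\mathbb{R}_{an}$. The difference is purely in the choice of compactifying coordinates. The paper uses the two affine charts $(x',y')=(x/y,\,1/y)$ and $(x',y')=(1/x,\,y/x)$ and checks term-by-term that each monomial $\operatorname{Im}\bigl(y^{j}/A^{2n+j}\bigr)$ (and its real-part companion) becomes analytic near the origin in each chart; you instead use a single polar blow-up $(r,s,t)$ with $(s,t)\in S^{1}$, which handles all directions at once and makes the nonvanishing of the denominator $s+c_{1}t$ an immediate consequence of $c_{1}\notin\mathbb{R}$. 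Your route is slightly more streamlined (one chart, one nonvanishing check), while the paper's is more explicitly computational and stays in $\mathbb{R}^{2}$ rather than introducing the auxiliary circle. Both versions rely on the same implicit input from Neumann--Zagier, namely that the series \eqref{18110808} converges absolutely and locally uniformly on its domain, which is what justifies passing from termwise analyticity to analyticity of the sum; you flag this honestly as the ``main obstacle'', and the paper glosses over it in the same way.
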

\begin{proof}
It is enough to prove that the function is analytic at infinity. In other words, we show that the transformations of $\Theta_{\mathcal{M}}(x,y)$ under   
\begin{equation}
x'=\frac{x}{y}, \quad y'=\frac{1}{y}.
\end{equation}
and
\begin{equation}
x'=\frac{1}{x}, \quad y'=\frac{y}{x}
\end{equation}
are analytic near $(0,0)$. We will only consider the first case since the second case can be treated similarly.  

By the formula given in \eqref{18110808},  up to a scalar multiple, an arbitrarily term of $\Theta(x,y)$ is either one of the following forms:   
\begin{equation*}
\text{Im} \dfrac{y^j}{A^{2n+j}}=\text{Im} \dfrac{y^j\overline{A}^{2n+j}}{A^{2n+j}\overline{A}^{2n+j}}=\dfrac{\text{Im}\;y^j\overline{A}^{2n+j}}{|A|^{4n+2j}}
\end{equation*}
or 
\begin{equation}\label{18111103}
\text{Re} \dfrac{y^j}{A^{2n+j}}=\text{Re} \dfrac{y^j\overline{A}^{2n+j}}{A^{2n+j}\overline{A}^{2n+j}}=\dfrac{\text{Re}\;y^j\overline{A}^{2n+j}}{|A|^{4n+2j}}.
\end{equation}

To simplify notation, we assume $c_1=i$ and so $A=x+iy$. Then 
\begin{equation*}
\dfrac{\text{Im}\;y^j\overline{A}^{2n+j}}{|A|^{4n+2j}}=\dfrac{\text{Im}\;y^j(x-iy)^{2n+j}}{(x^2+y^2)^{2n+j}},
\end{equation*}
and, by substituting
\begin{equation*}
x=\dfrac{x'}{y'}, \quad y=\dfrac{1}{y'}
\end{equation*}
into it, we get
\begin{equation}\label{18111102}
\begin{gathered}
\dfrac{\text{Im}\;\Big(\big(\frac{1}{y'}\big)^j\big((\frac{x'}{y'})-i(\frac{1}{y'})\big)^{2n+j}\Big)}{\big((\frac{x'}{y'})^2+(\frac{1}{y'})^2\big)^{2n+j}}
=\dfrac{\text{Im}\;\Big(\big(\frac{1}{y'}\big)^j\big(\frac{x'-i}{y'}\big)^{2n+j}\Big)}
{\Big(\frac{(x')^2+1}{(y')^2}\Big)^{2n+j}}
=\dfrac{\text{Im}\;\Big(\frac{(x'-i)^{2n+j}}{(y')^{2n+2j}}\Big)}{\Big(\frac{(x')^2+1}{(y')^2}\Big)^{2n+j}}\\
=\dfrac{\text{Im}\;\big((y')^{2n}(x'-i)^{2n+j}\big)}{\big((x')^2+1\big)^{2n+j}}.
\end{gathered}
\end{equation}
Since 
\begin{equation*}
\dfrac{1}{(x')^2+1}
\end{equation*}
is analytic for $x'\in \mathbb{R}$, \eqref{18111102} is also analytic over $\mathbb{R}^2(:=(x',y'))$. Similarly one can show that \eqref{18111103} is also analytic over the same domain.

Since $\Theta_{\mathcal{M}}(\frac{x'}{y'}, \frac{1}{y'})$ converges near $(0,0)$ and each term of it is analytic over $\mathbb{R}^2$, it is analytic near $(0,0)$. In conclusion, $\Theta_{\mathcal{M}}(x,y)$ is analytic at infinity and so $\Theta_{\mathcal{M}}(x,y)\in \mathbb{R}_{an}$.
\end{proof}

We introduce two more well-known o-minimal structures. 
\begin{example}
Let $\chi_n$ be the collection of subsets of $\mathbb{R}^n$ of the form $X=\pi(f^{-1}(0))$ where 
\begin{equation*}
f(x_1,\dots, x_m)=Q(x_1,\dots,x_m, e^{x_1}, \dots, e^{x_m})
\end{equation*} 
for some polynomial $Q\in \mathbb{R}[X_1,\dots, X_{2m}]$ and 
\begin{equation*}
\pi:\;\mathbb{R}^m\longrightarrow \mathbb{R}^n
\end{equation*} 
is the projection on the first $n$ coordinates. It was proved by A. J. Wilkie in \cite{W1} that this collection satisfies the axioms of the o-minimal structure. We denote this o-minimal structure by $\mathbb{R}_{exp}$.  
\end{example}
\begin{example}
The last example is $\mathbb{R}_{an, exp}$, which is generated by the union of $\mathbb{R}_{an}$ and $\mathbb{R}_{exp}$. The o-minimality of this set is due to van der Dries and Miller \cite{DM}. 
\end{example}

A nice thing about o-minimal structures is that there are no pathological things (such as Cantor sets, Borel sets, nonmeasurable sets, etc) in such structures, and so one can develop a tame topology on them. For instance, it is known that every definable set has only finitely connected components, and the boundary and interior of a definable set are again definable. The following theorems, which are key tools in the proof of our main theorem, exhibit other fine features of o-minimal structures.


\begin{theorem}\label{18103002}
Let $S\subset \mathbb{R}^m\times \mathbb{R}^n$ be definable, and for each $a\in \mathbb{R}^m$, put
\begin{equation}\label{18111701}
S_a:=\{b\in \mathbb{R}^n\;:\;(a,b)\in S\}.
\end{equation} 
Then there is a number $M=M(S)\in \mathbb{N}$ such that for each $a\in \mathbb{R}^m$ the fiber $S_a$ has at most $M$ definably connected components.\footnote{Note that a set is definably connected if and only if it is connected in the usual sense.} In particular, if $S_a$ is a finite set, then the cardinality of $S_a$ is at most $M$. 
\end{theorem}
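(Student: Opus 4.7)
The plan is to invoke the cell decomposition theorem, which is the central structural tool in o-minimal geometry. Recall that this theorem asserts that for any finite collection of definable subsets of $\mathbb{R}^{m+n}$, one can produce a finite partition of $\mathbb{R}^{m+n}$ into definable \emph{cells} that is compatible with the given collection, and moreover such a partition can be arranged to be \emph{cylindrical} with respect to any fixed coordinate projection. Every cell is, by construction, definably connected.

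I would apply this to $S\subset\mathbb{R}^m\times\mathbb{R}^n$, with cylindrical structure over the projection $\pi\colon\mathbb{R}^m\times\mathbb{R}^n\to\mathbb{R}^m$ onto the first $m$ coordinates. This produces finitely many cells $C_1,\ldots,C_k$ whose disjoint union is $\mathbb{R}^m\times\mathbb{R}^n$, with $S$ equal to the union of some subcollection $\{C_i:i\in I\}$, and such that for each $i$ the projection $\pi(C_i)$ is itself a cell in $\mathbb{R}^m$ and, for every $a\in\pi(C_i)$, the fiber $(C_i)_a=\{b\in\mathbb{R}^n:(a,b)\in C_i\}$ is a cell in $\mathbb{R}^n$ (in particular definably connected); if $a\notin\pi(C_i)$ then $(C_i)_a=\emptyset$.

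Given any $a\in\mathbb{R}^m$, we then obtain the decomposition
\begin{equation*}
S_a \;=\; \bigcup_{\substack{i\in I \\ a\in\pi(C_i)}} (C_i)_a,
\end{equation*}
expressing $S_a$ as a union of at most $|I|\leq k$ definably connected subsets of $\mathbb{R}^n$. Consequently $S_a$ has at most $k$ definably connected components, and one may take $M:=k$, which depends only on $S$ and not on $a$. The final sentence is immediate: if $S_a$ is finite, each of its definably connected components is a single point, so $|S_a|\leq M$.

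The only substantive input is the cylindrical cell decomposition theorem itself, which is nontrivial but entirely classical in o-minimal theory; one would simply cite \cite{Van der}. There is no genuine obstacle beyond applying this theorem in the form that is packaged exactly for fiberwise statements of this kind, so the proof is essentially a direct quotation of cell decomposition followed by the observation that the partition count serves as the desired uniform bound.
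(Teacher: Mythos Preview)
Your argument is correct and is precisely the standard one: the paper's own ``proof'' is simply a citation to Corollaries 3.6 and 3.7 of Chapter 3 in \cite{Van der}, and what you have written is exactly the cell-decomposition argument that underlies those corollaries. So you have not taken a different route; you have merely unpacked the reference that the paper invokes without further comment.
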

\begin{proof}
See Corollaries 3.6 and 3.7 of Chapter 3 in \cite{Van der}. 
\end{proof}

\begin{theorem}\label{18103004}
Let $S\subset \mathbb{R}^m\times \mathbb{R}^n$ be definable. For $d\in\{-\infty, 0, 1, \dots, n\}$ put 
\begin{equation*}
S(d):=\{a\in \mathbb{R}^n\;:\; \text{\normalfont\ dim}\;S_a=d\}.
\end{equation*}
Then $S(d)$ is definable and the part of $S$ above $S(d)$ has dimension given by
\begin{equation*}
\text{\normalfont\ dim}\; \big(\bigcup_{a\in S(d)}\{a\}\times S_a \big)=\text{\normalfont\ dim}\;(S(d))+d.
\end{equation*}
\end{theorem}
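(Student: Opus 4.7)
The plan is to reduce the statement to the Cell Decomposition Theorem, which is the fundamental structural result for o-minimal structures and provides the combinatorial backbone for essentially all dimension-theoretic facts of this type. Throughout, I read the statement with the harmless typo corrected so that $S(d)\subseteq \mathbb{R}^m$ rather than $\mathbb{R}^n$.

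First I would invoke the Cell Decomposition Theorem to partition $S$ into finitely many cells $C_1,\dots,C_k$ in $\mathbb{R}^{m+n}$ compatibly with the projection $\pi:\mathbb{R}^{m+n}\to\mathbb{R}^m$ onto the first $m$ coordinates. Compatibility guarantees that each image $D_i:=\pi(C_i)$ is itself a cell of $\mathbb{R}^m$, and that for every $a\in D_i$ the fiber $(C_i)_a$ is a cell of $\mathbb{R}^n$ whose dimension $d_i$ depends only on $i$, not on $a\in D_i$. Since $S_a$ is the disjoint union of those $(C_i)_a$ with $a\in D_i$, the standard dimension formula for finite disjoint unions of cells gives $\dim S_a=\max\{d_i:a\in D_i\}$. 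Consequently, the level set $S(d)$ can be written as
\begin{equation*}
S(d)=\Big(\bigcup_{i:\,d_i=d}D_i\Big)\setminus\Big(\bigcup_{j:\,d_j>d}D_j\Big),
\end{equation*}
which is a boolean combination of definable cells and hence definable, settling the first assertion.

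For the dimension formula, set $T:=\bigcup_{a\in S(d)}\{a\}\times S_a=S\cap\big(S(d)\times \mathbb{R}^n\big)$, which decomposes as $T=\bigsqcup_i\big(C_i\cap(S(d)\times \mathbb{R}^n)\big)$. The cell structure of $C_i$ (iterated graphs and open intervals above $D_i$) yields, by a further cell decomposition applied to $D_i\cap S(d)$, the identity
\begin{equation*}
\dim\big(C_i\cap(S(d)\times \mathbb{R}^n)\big)=\dim\big(D_i\cap S(d)\big)+d_i.
\end{equation*}
For cells with $d_i>d$ the set $D_i\cap S(d)$ is empty, because any $a\in D_i\cap S(d)$ would give $\dim S_a\geq d_i>d$, contradicting $a\in S(d)$. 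For cells with $d_i<d$ the contribution is strictly less than $\dim S(d)+d$. For $d_i=d$ the contribution is at most $\dim S(d)+d$, giving the upper bound $\dim T\leq \dim S(d)+d$.

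The matching lower bound follows from the observation that every $a\in S(d)$ lies in some $D_i$ with $d_i=d$ (namely a cell realizing the maximum $\dim S_a=d$), so $S(d)\subseteq \bigcup_{i:\,d_i=d}D_i$ and therefore $\dim S(d)=\max_{i:\,d_i=d}\dim(D_i\cap S(d))$; the cell $i$ attaining this maximum contributes a piece of $T$ of dimension exactly $\dim S(d)+d$. The main obstacle is purely bookkeeping: one has to be scrupulous in separating the three cases $d_i>d$, $d_i=d$, $d_i<d$, and to invoke the standard fiberwise dimension formula for a single cell over its base so as not to fall into a circular argument. All the ingredients are standard and appear in Chapters~3--4 of van~der~Dries \cite{Van der}.
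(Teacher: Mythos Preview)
Your argument is correct and is essentially the standard proof via the Cell Decomposition Theorem; indeed it is the argument one finds in van~den~Dries's book. The paper itself does not give a proof at all: its ``proof'' consists solely of the citation ``See Proposition~1.5 of Chapter~4 in \cite{Van der}.'' So there is nothing to compare beyond noting that you have supplied precisely the proof that the paper outsources to that reference, including the correct identification of the typo ($S(d)\subseteq\mathbb{R}^m$, not $\mathbb{R}^n$).
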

\begin{proof}
See Proposition 1.5 of Chapter 4 in \cite{Van der}.
\end{proof}

\section{Preliminary Lemmas}\label{Preliminary}
In this section, we prove several preliminary lemmas needed to prove the main result. Since the proof of the main result only relies on the statements of these lemmas, a trusting reader can skip ahead details of the proofs at first reading.  
\subsection{Two Dehn fillings of the same volume} The first lemma roughly says that if two different Dehn fillings have the same volume, then the gap between their associated quadratic forms is not that big but uniformly bounded. 

\begin{lemma}\label{18110304}
Let $\mathcal{M}$ be a $1$-cusped hyperbolic $3$-manifold and $Q_{\mathcal{M}}$ be the quadratic form associated with it. Then there exists $m=m(\mathcal{M})$ such that if
\begin{equation}\label{18112501}
\text{\normalfont\ vol}(\mathcal{M}_{p/q})=\text{\normalfont\ vol}(\mathcal{M}_{p'/q'}),
\end{equation}
then
\begin{equation*}
|Q_{\mathcal{M}}(p,q)-Q_{\mathcal{M}}(p',q')|<m.
\end{equation*}
\end{lemma}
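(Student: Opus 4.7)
The plan is to exploit the Neumann--Zagier asymptotic volume formula from Theorem \ref{NZ1}, which yields
\[
\Theta_{\mathcal{M}}(p,q) \;=\; \frac{\alpha}{Q_{\mathcal{M}}(p,q)} \;+\; O\!\left(\frac{1}{Q_{\mathcal{M}}(p,q)^{2}}\right),
\]
where $\alpha = -\pi^{2}\,\text{Im}\,c_{1}$ is nonzero because the cusp shape $c_{1}$ is non-real. The hypothesis \eqref{18112501} is equivalent to $\Theta_{\mathcal{M}}(p,q)=\Theta_{\mathcal{M}}(p',q')$, so setting $Q_{1}=Q_{\mathcal{M}}(p,q)$ and $Q_{2}=Q_{\mathcal{M}}(p',q')$ and rearranging gives
\[
\alpha\cdot\frac{Q_{2}-Q_{1}}{Q_{1}Q_{2}} \;=\; O\!\left(\frac{1}{Q_{1}^{2}}\right) + O\!\left(\frac{1}{Q_{2}^{2}}\right),
\]
which I would rewrite as $|Q_{2}-Q_{1}| = O(Q_{2}/Q_{1}) + O(Q_{1}/Q_{2})$.

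First I would dispose of the \textbf{large regime}. Using the explicit convergence in \eqref{18110808}, I can pick a threshold $C=C(\mathcal{M})$ so that whenever $Q_{i}\ge C$ both factors $1 + O(1/Q_{i})$ lie in $[1/2,3/2]$. The rewriting $(\alpha/Q_{1})(1+O(1/Q_{1}))=(\alpha/Q_{2})(1+O(1/Q_{2}))$ of the equation $\Theta_{\mathcal{M}}(p,q)=\Theta_{\mathcal{M}}(p',q')$ then forces $Q_{2}/Q_{1}\in[1/3,3]$ in this regime. Plugging these bounds back into the displayed estimate for $|Q_{2}-Q_{1}|$ produces a constant $m_{1}=m_{1}(\mathcal{M})$ satisfying $|Q_{1}-Q_{2}|<m_{1}$ whenever $Q_{1},Q_{2}\ge C$.

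The \textbf{small regime}, $\min(Q_{1},Q_{2})<C$, is then handled by a finiteness argument. There are only finitely many integer pairs $(p,q)$ with $Q_{\mathcal{M}}(p,q)<C$, and for each one inducing a hyperbolic Dehn filling, Corollary \ref{18110702} supplies only finitely many $(p',q')$ with $\text{vol}(\mathcal{M}_{p'/q'})=\text{vol}(\mathcal{M}_{p/q})$. Thus there are only finitely many exceptional pairs, so $|Q_{1}-Q_{2}|$ is bounded by some $m_{2}=m_{2}(\mathcal{M})$ over them. Taking $m=\max(m_{1},m_{2})+1$ completes the proof.

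The main obstacle is guaranteeing that the implicit constants in the $O$-terms are uniform over all valid $(p,q)$ so that the threshold $C$ can be chosen once and for all. This uniformity follows from the explicit power-series form of \eqref{18110808}, whose tail can be majorized by a geometric series in $1/|A|$ for $|p|+|q|$ sufficiently large, with bounds depending only on $\mathcal{M}$. Everything else reduces to bookkeeping; the small-$Q$ case legitimately invokes Corollary \ref{18110702}, which is proven from Thurston's Dehn filling theorem independently of the present lemma.
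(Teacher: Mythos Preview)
Your proposal is correct and follows the same overall strategy as the paper: both start from the Neumann--Zagier expansion $\Theta_{\mathcal{M}}=\alpha/Q+O(1/Q^{2})$, equate the two sides, and deduce that $|Q_{1}-Q_{2}|$ is bounded once $Q_{1},Q_{2}$ are large. The technical implementations differ slightly. The paper rewrites the equality as $\big|\tfrac{1}{r}-\tfrac{1}{r'}\big|<C\big(\tfrac{1}{r^{2}}+\tfrac{1}{(r')^{2}}\big)$ and then proves by a direct algebraic computation that this inequality \emph{fails} whenever $|r-r'|>2C$ and $r,r'$ are large, obtaining the bound by contraposition. You instead first pin down the ratio $Q_{2}/Q_{1}\in[1/3,3]$ from the multiplicative form $(\alpha/Q_{1})(1+O(1/Q_{1}))=(\alpha/Q_{2})(1+O(1/Q_{2}))$ and then feed that into $|Q_{2}-Q_{1}|=O(Q_{2}/Q_{1})+O(Q_{1}/Q_{2})$; this is a bit more streamlined and avoids the chain of inequalities in the paper's Claim.

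One point where your write-up is actually more complete: the paper ends its proof with ``for $|p|+|q|,|p'|+|q'|$ sufficiently large, and thus obtain the desired conclusion,'' leaving the finitely many small pairs implicit. Your explicit small-regime argument via positive-definiteness of $Q_{\mathcal{M}}$ and Corollary~\ref{18110702} closes that gap cleanly and is logically independent of the present lemma, so there is no circularity.
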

\begin{proof}
We denote 
\begin{equation*}
\Theta_{\mathcal{M}}(p,q)-\dfrac{\pi^2\text{Im}\;c_1}{Q_{\mathcal{M}}(p,q)}
\end{equation*}
by $E(p,q)$ and rewrite the Neumann-Zagier volume formula \eqref{18121106} as
\begin{equation*}
\text{vol}(\mathcal{M}_{p/q})=\text{vol}(\mathcal{M})-\dfrac{\pi^2\text{Im}\;c_1}{Q_{\mathcal{M}}(p,q)}+E(p,q).
\end{equation*}
If 
\begin{equation*}
\text{vol}(\mathcal{M}_{p/q})=\text{vol}(\mathcal{M}_{p'/q'}),
\end{equation*}
then clearly
\begin{equation}\label{181023011}
-\dfrac{\pi^2\text{Im}\;c_1}{Q_{\mathcal{M}}(p,q)}+E(p,q)=-\dfrac{\pi^2\text{Im}\;c_1}{Q_{\mathcal{M}}(p',q')}+E(p',q').
\end{equation}

To simplify notation, let 
\begin{equation*}
r=Q_{\mathcal{M}}(p,q), \quad r'=Q_{\mathcal{M}}(p',q').
\end{equation*}
Since
\begin{equation*}
\begin{gathered}
E(p,q)=O\Big(\dfrac{1}{Q_{\mathcal{M}}(p,q)^2}\Big)=O\Big(\dfrac{1}{r^2}\Big), \\
E(p',q')=O\Big(\dfrac{1}{Q_{\mathcal{M}}(p',q')^2}\Big)=O\Big(\dfrac{1}{(r')^2}\Big),
\end{gathered}
\end{equation*}
combining with \eqref{181023011}, we find $C=C(\mathcal{M})>0$ such that 
\begin{equation}\label{18112801}
\Big|\dfrac{1}{r}-\dfrac{1}{r'}\Big|<C\Big(\dfrac{1}{r^2}+\dfrac{1}{(r')^2}\Big).
\end{equation}
Now we claim the following:
\begin{claim}\label{18120401}
Let $C$ be the same as above. If $r, r'$ are sufficiently large satisfying $|r-r'|>2C$, then  
\begin{equation}\label{18102302}
\Big|\dfrac{1}{r}-\dfrac{1}{r'}\Big|>C\Big(\dfrac{1}{r^2}+\dfrac{1}{(r')^2}\Big).
\end{equation}
\end{claim}

Assuming the claim, since \eqref{18112801} and \eqref{18102302} are opposite to each other, we get 
\begin{equation*}
|r-r'|=|Q_{\mathcal{M}}(p,q)-Q_{\mathcal{M}}(p',q')|\leq 2C 
\end{equation*}
for $|p|+|q|, |p'|+|q'|$ sufficiently large, and thus obtain the desired conclusion. 
\textit{Proof of Claim \ref{18120401}}
Without loss of generality, we assume $\dfrac{1}{r}>\dfrac{1}{r'}$. Then \eqref{18102302} is equivalent to 
\begin{equation}\label{18120406}
\dfrac{r'-r}{rr'}>C\Big(\dfrac{r^2+(r')^2}{r^2(r')^2}\Big),
\end{equation}
which is simplified as
\begin{equation*}
r'-r>C\Big(\dfrac{r^2+(r')^2}{rr'}\Big)
\end{equation*}
and further as 
\begin{equation}\label{18102404}
r(r')^2-r^2r'>C(r^2+(r')^2).
\end{equation}
Now \eqref{18102404} is equivalent to 
\begin{equation*}
(r')^2(r-C)>r^2(r'+C),
\end{equation*}
and  
\begin{equation}\label{18121001}
\dfrac{(r')^2}{r^2}>\dfrac{r'+C}{r-C}.
\end{equation}
If we put $r'=r+k$, then \eqref{18121001} implies
\begin{gather} 
\dfrac{(r+k)^2}{r^2}>\dfrac{r+k+C}{r-C}\nonumber\\
\Longrightarrow\dfrac{r^2+2rk+k^2}{r^2}>\dfrac{r-C+(k+2C)}{r-C}\nonumber\\
\Longrightarrow\dfrac{2rk+k^2}{r^2}>\dfrac{k+2C}{r-C}\nonumber\\
\Longrightarrow(2rk+k^2)(r-C)>r^2(k+2C)\nonumber\\
\Longrightarrow 2r^2k+k^2r-2rkC-k^2C>r^2k+2r^2C\nonumber\\
\Longrightarrow r^2k+k^2r-2rkC-2r^2C>+k^2C\nonumber\\
\Longrightarrow r(rk+k^2-2rC-2kC)>k^2C\nonumber\\
\Longrightarrow r(r+k)(k-2C)>k^2C.\label{19032807}
\end{gather}
Clearly \eqref{19032807} holds for $k>2C$ and sufficiently large $r$. This completes the proof of the claim and so the proof of Lemma \ref{18110304}. 
\end{proof}


\subsection{Proof of Lemma \ref{18112502}}
In this section, we prove Lemma \ref{18112502}. Let us first remind the following well-known theorem of Weierstrass \cite{KP}:
\begin{theorem}[Weierstrass division theorem]\label{Weier}
Let 
\begin{equation*}
g(\bold{x},y)\sum_{\alpha\in (\mathbb{Z}^+)^n}\sum^{\infty}_{j=0}g_{\alpha, j}\bold{x}^{\alpha}y^j,
\end{equation*}
$\bold{x}=(x_1, \dots, x_n)\in \mathbb{R}^n, y\in \mathbb{R}$, be real analytic in a neighborhood of $(\bold{0},0)\in \mathbb{R}^n\times \mathbb{R}$ and suppose there exists $k\in \mathbb{N}$ such that 
\begin{equation*}
g_{0,0}=g_{0,1}=\dots=g_{0,k-1}=0, \quad \quad g_{0,k}=1.
\end{equation*}
If $f(\bold{x},y)$ is real analytic in a neighborhood of $(\bold{0},0)\in \mathbb{R}^n\times \mathbb{R}$, then there exist unique real analytic functions $h(\bold{x},y)$ and $r(\bold{x},y)$ such that 
\begin{equation*}
f(\bold{x},y)= g(\bold{x},y)h(\bold{x},y)+r(\bold{x},y)
\end{equation*} 
where the degree of $y$ in any term of $r(\bold{x},y)$ is strictly less than $k$.
\end{theorem}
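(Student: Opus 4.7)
The plan is to prove the division theorem by the classical two-step route: first apply Weierstrass preparation to replace $g$ by a distinguished polynomial in $y$, then perform polynomial division via Cauchy-type integral formulas on a complexification. Since $f$ and $g$ are real analytic near the origin, they extend to holomorphic functions on a polydisk in $\mathbb{C}^n \times \mathbb{C}$; I carry out the entire argument in the holomorphic category and recover the real statement at the end using the symmetry $F(\mathbf{x}, y) = \overline{F(\bar{\mathbf{x}}, \bar y)}$ that $f$ and $g$ satisfy.

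First step (Weierstrass preparation). Since $g(0, y) = y^k + g_{0, k+1} y^{k+1} + \cdots$ vanishes to order exactly $k$ at $y = 0$, Rouch\'e's theorem gives $\rho > 0$ and a polydisk $U$ about $0 \in \mathbb{C}^n$ such that, for $\mathbf{x} \in U$, the function $g(\mathbf{x}, \cdot)$ has exactly $k$ zeros $\zeta_1(\mathbf{x}), \ldots, \zeta_k(\mathbf{x})$ (counted with multiplicity) in $|y| < \rho$ and is nonzero on $|y| = \rho$. Setting
$$P(\mathbf{x}, y) := \prod_{j=1}^{k}\bigl(y - \zeta_j(\mathbf{x})\bigr) = y^k + a_{k-1}(\mathbf{x}) y^{k-1} + \cdots + a_0(\mathbf{x}),$$
the power sums $\sum_j \zeta_j(\mathbf{x})^m$ are holomorphic in $\mathbf{x}$ because they equal the contour integrals $\frac{1}{2\pi i}\oint_{|\zeta|=\rho} \zeta^m g_y(\mathbf{x},\zeta)/g(\mathbf{x},\zeta)\, d\zeta$, so by Newton's identities the coefficients $a_i(\mathbf{x})$ are holomorphic with $a_i(0) = 0$. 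The ratio $u := g/P$ is bounded holomorphic on $U \times \{|y| \le \rho\}$ away from the common zero set of $g$ and $P$, and since $g$ and $P$ share the same zeros with the same multiplicities by construction, $u$ extends to a holomorphic unit. Writing $f = g h + r$ as $f = P(uh) + r$, I reduce to the case in which $g$ is itself the distinguished polynomial $P$.

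Second step (division by $P$ via Cauchy formulas). For $\mathbf{x} \in U$ and $|y| < \rho$ define
$$h(\mathbf{x}, y) := \frac{1}{2\pi i} \oint_{|\zeta|=\rho} \frac{f(\mathbf{x}, \zeta)}{P(\mathbf{x}, \zeta)(\zeta - y)}\, d\zeta, \qquad r(\mathbf{x}, y) := \frac{1}{2\pi i} \oint_{|\zeta|=\rho} \frac{f(\mathbf{x}, \zeta)}{P(\mathbf{x}, \zeta)} \cdot \frac{P(\mathbf{x}, \zeta) - P(\mathbf{x}, y)}{\zeta - y}\, d\zeta.$$
Because $P(\mathbf{x}, \zeta) \neq 0$ on the contour, both integrands are holomorphic in $(\mathbf{x}, y, \zeta)$, so $h$ and $r$ are holomorphic in $(\mathbf{x}, y)$. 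Adding the two expressions, the factors of $P(\mathbf{x}, \zeta)$ cancel and the integrand collapses to $f(\mathbf{x}, \zeta)/(\zeta - y)$, so Cauchy's integral formula gives $P h + r = f$. Since $P$ is monic of degree $k$ in $y$, the kernel $(P(\mathbf{x}, \zeta) - P(\mathbf{x}, y))/(\zeta - y)$ is a polynomial in $y$ of degree exactly $k-1$, whence $\deg_y r < k$; this settles existence.

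Third step (uniqueness and descent to $\mathbb{R}$). If $P h_1 + r_1 = P h_2 + r_2$, set $\delta := h_1 - h_2$ and $\sigma := r_2 - r_1$, so that $P \delta = \sigma$ with $\deg_y \sigma < k$. Evaluating at $\mathbf{x} = \mathbf{0}$ and using $P(\mathbf{0}, y) = y^k$ yields $y^k \delta(\mathbf{0}, y) = \sigma(\mathbf{0}, y)$; the left side has $y$-order $\ge k$ and the right side has $y$-degree $< k$, so both vanish. Expanding $\delta = \sum_j \delta_j(\mathbf{x}) y^j$, the recurrence obtained by matching coefficients in $P\delta = \sigma$ expresses each $\delta_j$ as a linear combination of other $\delta_{\ell}$ with coefficients in the maximal ideal $\mathfrak{m} = (x_1, \dots, x_n)$, so an $\mathfrak{m}$-adic induction combined with Krull's intersection theorem forces $\delta \equiv \sigma \equiv 0$. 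Finally, for real $\mathbf{x}$ the nonreal roots $\zeta_j(\mathbf{x})$ occur in conjugate pairs, so $P$ has real coefficients and $u$ is real-valued on real arguments; and invariance of the defining contour integrals under $\zeta \mapsto \bar\zeta$ combined with the reality symmetry of $f$ yields $h$ and $r$ real analytic on real arguments. The main technical obstacle is step one: verifying that the continuously varying roots $\zeta_j(\mathbf{x})$ assemble into genuinely holomorphic elementary symmetric functions (rather than merely continuous ones) and that $u = g/P$ is a true holomorphic unit, both of which ultimately rest on Rouch\'e's theorem and the residue/Newton-identity calculus.
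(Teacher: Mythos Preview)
The paper does not prove this theorem at all: it is quoted as a classical result with a citation to Krantz--Parks, \emph{A Primer of Real Analytic Functions}, and then used as a black box in the proof of Lemma~\ref{18112601}. So there is no ``paper's own proof'' to compare your argument against.

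That said, your proof is correct and is essentially the standard textbook route (and in particular the one in the cited reference): reduce to a Weierstrass polynomial via Rouch\'e/residue calculus and Newton's identities, then carry out the division with the Hermite--Cauchy integrals, and finally argue uniqueness. A couple of minor remarks. First, your uniqueness argument via $\mathfrak m$-adic filtration is valid, but there is a shorter route once you already have the preparation: for each fixed $\mathbf{x}$ near $0$, the polynomial $\sigma(\mathbf{x},\cdot)$ has degree $<k$ yet vanishes (with multiplicity) at the $k$ roots of $P(\mathbf{x},\cdot)$, hence $\sigma\equiv 0$ and then $\delta\equiv 0$ since $P$ is not a zero divisor. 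Second, in the descent to $\mathbb{R}$ it is cleaner to invoke uniqueness directly: since $f$ and $g$ satisfy $\overline{F(\bar{\mathbf{x}},\bar y)}=F(\mathbf{x},y)$, the pair $(\overline{h(\bar{\mathbf{x}},\bar y)},\overline{r(\bar{\mathbf{x}},\bar y)})$ also solves $f=gh+r$, so by uniqueness $h$ and $r$ inherit the same symmetry and are real on real arguments; this avoids the separate check that $P$ and $u$ are real.
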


Using the Weierstrass division theorem, we prove 
\begin{lemma}\label{18112601}
Let $\mathcal{M}$ be a $1$-cusped hyperbolic $3$-manifold whose cusp shape is quadratic, $\Theta_{\mathcal{M}}(x,y)$ and $Q_{\mathcal{M}}(x,y)$ be its volume function and associated quadratic form respectively. Suppose 
\begin{equation}\label{18101401}
\Theta_{\mathcal{M}}(x,y)=\Theta_{\mathcal{M}}(z,w)
\end{equation}
and
\begin{equation}\label{18102401}
\dfrac{1}{\mathcal{Q}_{\mathcal{M}}(x,y)}=\dfrac{1}{\mathcal{Q}_{\mathcal{M}}(z,w)}
\end{equation}
are equivalent for $|x|+|y|, |z|+|w|$ sufficiently large. Then there exists an analytic function $h$ such that 
\begin{equation}\label{18102407}
\Theta_{\mathcal{M}}(x, y) = h\Big(\dfrac{1}{\mathcal{Q}_{\mathcal{M}}(x,y)}\Big). 
\end{equation}
\end{lemma}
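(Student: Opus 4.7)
My plan is to first use the hypothesis to define $h$ pointwise and then verify analyticity via the Weierstrass division theorem applied in coordinates at infinity. The hypothesis says precisely that $\Theta_{\mathcal{M}}(x,y)$ is determined by the value $1/Q_{\mathcal{M}}(x,y)$: for each $t$ in the image of $1/Q_{\mathcal{M}}$ on the domain I can pick any $(x,y)$ with $1/Q_{\mathcal{M}}(x,y) = t$ and set $h(t) := \Theta_{\mathcal{M}}(x,y)$; well-definedness is immediate from the forward direction of the equivalence.

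To show $h$ is analytic, I would work in the infinity coordinates $(x',y') = (x/y, 1/y)$ of Lemma \ref{18101002}, so that $\tilde\Theta(x',y') := \Theta_{\mathcal{M}}(x/y, 1/y)$ and $\tilde Q(x',y') := 1/Q_{\mathcal{M}}(x/y, 1/y) = (y')^2/((x')^2 + d)$ extend to real-analytic functions near any point $(x'_0, 0)$. A term-by-term inspection of the transformation computed in the proof of Lemma \ref{18101002} (see \eqref{18111102}) shows that every exponent of $y'$ appearing in either function is even, so the substitution $u := (y')^2$ yields real-analytic functions $\Psi(x',u)$ and $\tilde Q(x',u) = u/((x')^2+d)$ near $(x'_0, 0)$, with $\Psi(x',0) = 0$ (since $\Theta_{\mathcal{M}}$ vanishes at infinity) and $\tilde Q$ of Weierstrass order $1$ in $u$. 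Applying Theorem \ref{Weier} to divide $\Psi$ by $\tilde Q$ (after normalizing by the nonvanishing factor $(x'_0)^2 + d$) gives $\Psi = \tilde Q\cdot H + r(x')$ with $r(x') = \Psi(x',0) = 0$, so $\Psi = \tilde Q \cdot H$ for some real-analytic $H$.

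To conclude, I would define $h(t) := \Psi(0, dt)$, manifestly analytic in $t$ near $0$, and check $\Psi(x',u) = h(\tilde Q(x',u))$ by noting that $(x',u)$ and $(0, d\tilde Q(x',u))$ lie on a common $\tilde Q$-level set (indeed $\tilde Q(0, d\tilde Q(x',u)) = \tilde Q(x',u)$), so the hypothesis forces their $\Psi$-values to agree. Translating back via the coordinate change yields $\Theta_{\mathcal{M}}(x,y) = h(1/Q_{\mathcal{M}}(x,y))$. The main obstacle I anticipate is the global-to-local transition: the equivalence hypothesis is a global statement on the large-norm regime, while analyticity of $h$ is a local property near $t = 0$; one must verify that both representatives used in the comparison lie in the valid large-norm domain, which in the infinity chart reduces to a harmless small-$u$ condition. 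The evenness of $\tilde\Theta$ in $y'$ is precisely what makes Weierstrass division applicable in the single variable $u$, and is the technical backbone of the argument.
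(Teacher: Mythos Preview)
Your proposal is correct and follows a genuinely different route from the paper. Both arguments pass to the infinity chart, but the paper divides $\Psi$ by $g(x',y')=(y')^2/((x')^2+d)$ in the variable $y'$, where $g$ has Weierstrass order~$2$; the remainder is $r_0(x')+r_1(x')y'$, and the heart of the paper's argument is killing $r_1$ by using the hypothesis together with the evenness of $g$ in $y'$ (comparing $(x_t,y_t)$ with $(x_t,-y_t)$ on a common $g$-level), after which the paper iterates, peeling off one power of $g$ at a time to build $h(t)=c_1t+c_2t^2+\cdots$ inductively. You instead read off directly from the explicit computation \eqref{18111102} that $\tilde\Theta$ itself is even in $y'$; the substitution $u=(y')^2$ then reduces $\tilde Q$ to Weierstrass order~$1$ in $u$, so the remainder collapses to the single function $r(x')=\Psi(x',0)=0$, and you can write $h$ down in closed form as $h(t)=\Psi(0,dt)$ and verify $\Psi=h\circ\tilde Q$ by a single application of the hypothesis on the common level set of $(x',u)$ and $(0,d\tilde Q(x',u))$. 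Your route is shorter---it sidesteps both the odd-remainder argument and the inductive construction---at the cost of invoking the specific term-by-term structure of $\Theta_{\mathcal{M}}$ from the Neumann--Zagier formula; the paper's route is more self-contained, extracting the needed parity purely from the hypothesis. One small remark: your Weierstrass division step $\Psi=\tilde Q\cdot H$ is correct but never used in your own conclusion---once $h(t)=\Psi(0,dt)$ is in hand and the level-set comparison is made, the factorization plays no role and can simply be dropped.
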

\begin{proof}
As defined in the proof of Lemma \ref{18101002}, let 
\begin{equation*}
\Psi(x,y):=\Theta_{\mathcal{M}}\Big(\dfrac{x}{y}, \dfrac{1}{y}\Big)
\end{equation*}
and consider it as an analytic functions over a compact domain $D(\subset \mathbb{R}^2)$ containing the origin. Similarly put
\begin{equation*}
g(x,y):=Q_{\mathcal{M}}\Big(\dfrac{x}{y}, \dfrac{1}{y}\Big)=\dfrac{y^2}{x^2+d}=\dfrac{y^2}{d}\Big(1-\frac{x^2}{d}+\frac{x^4}{d^2}-\cdots\Big)
\end{equation*}
and consider $g(x,y)$ as an analytic functions over $D$.

If \eqref{18101401} and \eqref{18102401} define the same variety, then, equivalently, it implies   
\begin{equation*}
\Psi(x,y)=\Psi(z,w)
\end{equation*}
and 
\begin{equation*}
g(x,y)=g(z,y)
\end{equation*}
define the same variety over $D\times D$. By the Weierstrass division theorem, there exist $\Psi_1(x,y)$ and $r(x,y)$ such that  
\begin{equation}\label{18102402}
\Psi(x,y)=g(x,y)\Psi_1(x,y)+r(x,y)
\end{equation}
where $r(x,y)$ is of the following form
\begin{equation}\label{18101404}
r_1(x)y+r_0(x).
\end{equation}

Now we claim 
\begin{claim}
In \eqref{18101404}, $r_1(x)=r_0(x)=0$. That is, $r(x,y)=0$ in \eqref{18102402} and so 
\begin{equation*}
g(x,y)\;|\;\Psi(x,y). 
\end{equation*} 
\end{claim}
\begin{proof}
Since 
\begin{equation*}
g(x,0)=\Psi(x,0)=0
\end{equation*} 
for $0\leq x<\epsilon$ ($\epsilon$ is sufficiently small), we have
\begin{equation*}
r(x,0)=r_0(x)=0.
\end{equation*}

Now let $\Psi_1(x,y)$ be of the following form:
\begin{equation*}
q_0(x)+q_1(x)y+\cdots.
\end{equation*}
Since \eqref{18101401} and \eqref{18102401} define the same variety, for any $c>0$, there exists some $C>0$ (depending on $c$) such that 
\begin{equation*}
g(x,y)=c \Longleftrightarrow \Psi(x,y)=C.
\end{equation*}
We parametrize two curves as 
\begin{equation*}
g\big(x_t,y_t\big)=c \quad \text{and}\quad  \Psi\big(x_t,y_t\big)=C.
\end{equation*}
By \eqref{18102402},  
\begin{equation*}
C=\Psi(x_t,y_t)=g(x_t,y_t)\Psi_1(x_t,y_t)+r(x_t,y_t)=c\Psi_1(x_t,y_t)+r_1(x_t)y_t,
\end{equation*}
which is   
\begin{equation}\label{18101406}
C=c(q_0(x_t)+q_1(x_t)y_t+\cdots)+r_1(x_t)y_t.
\end{equation}
Since the degree of $y$ in $g(x,y)$ is even, we have
\begin{equation}\label{18113001}
g(x_t,-y_t)=c
\end{equation}
and so 
\begin{equation}\label{18120409}
\Psi(x_t, -y_t)=C
\end{equation}
Applying \eqref{18113001} and\eqref{18120409} to \eqref{18102402}, we get
\begin{equation}
C=\Psi(x_t,-y_t)=g(x_t,-y_t)\Psi_1(x_t,-y_t)+r(x_t,-y_t)=c\Psi_1(x_t,-y_t)-r_1(x_t)y_t,
\end{equation}
which is 
\begin{equation}\label{18101407}
C=c\big(q_0(x_t)-q_1(x_t)y_t+\cdots\big)-r_1(x_t)y_t.
\end{equation}
Comparing \eqref{18101406} and \eqref{18101407}, we have
\begin{equation*}
c\big(q_0(x_t)+q_1(x_t)y_t+\cdots\big)+r_1(x_t)y_t=c\big(q_0(x_t)-q_1(x_t)y_t+\cdots\big)-r_1(x_t)y_t,
\end{equation*}
which implies
\begin{equation}\label{18102405}
\big(cq_1(x_t)+r_1(x_t)\big)y_t+\cdots=-\big(cq_1(x_t)+r_1(x_t)\big)y_t+\cdots
\end{equation}
and 
\begin{equation}\label{18121101}
r_1(x_t)y_t+\sum_{i=1}^{\infty} cq_{2i-1}(x_t)y_t^{2i-1}=0.
\end{equation}
Since  
\begin{equation*}
g(x_t,y_t)=c
\end{equation*}
and $c$ is arbitrarily, we conclude  
\begin{equation}\label{18121102}
r_1(x)y+g(x,y)\sum_{i=1}^{\infty} q_{2i-1}(x)y^{2i-1}=0
\end{equation}
for any $x,y$. Recall the degree of $y$ in each term of $g(x,y)$ is at least $2$, and so $r_1(x)$ is the coefficient of $y$ in \eqref{18121102}. This implies $r_1(x)=0$ and thus completes the proof of the claim. 
\end{proof}
By the claim, 
\begin{equation}\label{18111401}
\Psi(x,y)=\Psi(z,w)
\end{equation}
is rewritten as 
\begin{equation}\label{18111401}
g(x,y)\Psi_1(x,y)=g(z,w)\Psi_1(z,w).
\end{equation}
By the assumption, \eqref{18111401} and  
\begin{equation}\label{18113002}
g(x,y)=g(z,w)
\end{equation}
are equivalent to each other, and so we get the equivalence between 
\begin{equation*}
\Psi_1(x,y)=\Psi_1(z,w)
\end{equation*} 
and \eqref{18113002}.

Let 
\begin{equation*}
\Psi_1(0,0)=c_1,
\end{equation*}
then, by applying the claim again, we have
\begin{equation*}
g(x,y)\;|\;\big(\Psi_1(x,y)-c_1\big).
\end{equation*}  
Now we construct $\Psi_i, c_i$  ($i\geq 2$) inductively and conclude 
\begin{equation*}
\Psi(x,y)=h(g(x,y))
\end{equation*}
where $h(t)=c_1t+c_2t^2+\cdots$, implying 
\begin{equation*}
\Theta_{\mathcal{M}}(x, y) = h\Big(\dfrac{1}{Q_{\mathcal{M}}(x,y)}\Big).
\end{equation*}
This completes the proof of the lemma. 
\end{proof}

Using the above lemma, we now prove Lemma \ref{18112502}. 

\begingroup
\def\thetheorem{\ref{18112502}}
\begin{lemma}
Let $\mathcal{M}$ be a $1$-cusped hyperbolic $3$-manifold whose cusp shape is quadratic. Let $\Theta_{\mathcal{M}}(x, y)$ be its volume function and $Q_{\mathcal{M}}(x,y)$ be the quadratic form associated with it. Then there is no analytic function $h(t)$ such that 
\begin{equation}\label{18102406}
\Theta_{\mathcal{M}}(x,y)=h\Big(\dfrac{1}{Q_{\mathcal{M}}(x,y)}\Big).
\end{equation}
\end{lemma}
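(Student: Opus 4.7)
The plan is a proof by contradiction. Suppose that some analytic $h$ satisfies $\Theta_{\mathcal{M}}(x,y) = h\bigl(1/Q_{\mathcal{M}}(x,y)\bigr)$. Since $1/Q_{\mathcal{M}}$ is constant on each ellipse $E_r := \{x^2 + dy^2 = r^2\}$, so must be $\Theta_{\mathcal{M}}$. The strategy is to substitute a parametrization of $E_r$ into the Neumann--Zagier series in Theorem \ref{NZ1}, match coefficients of $1/r^{2n}$, and inductively force each $c_{2n-1}$ to vanish, contradicting Lemma \ref{NZ}.

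Concretely, with the normalization $c_1 = \sqrt{-d}$, parametrize $E_r$ by $x = r\cos\theta$, $y = (r/\sqrt{d})\sin\theta$; then $A = x + c_1 y = r e^{i\theta}$ and $y/A = (\sin\theta/\sqrt{d})\,e^{-i\theta}$, which is bounded on $E_r$ independently of $r$. Hence the $n$-th summand of \eqref{18110808}, having the form $R_n(q/A)\cdot(2\pi/A)^{2n}$ with $R_n$ a polynomial whose coefficients depend only on $c_3,\ldots,c_{2n-1}$, becomes $P_n(\theta)/r^{2n}$ for some trigonometric polynomial $P_n$. Because $\Theta_{\mathcal{M}}|_{E_r}$ must be independent of $\theta$ for every sufficiently large $r$, and the functions $r^{-2n}$ are linearly independent as functions of $r$, each $P_n(\theta)$ must itself be constant in $\theta$.

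I would then induct on $n \ge 2$. Assuming $c_3 = c_5 = \cdots = c_{2n-3} = 0$, every monomial of $R_n$ other than the bare $c_{2n-1}$ term carries a factor from $\{c_3,\ldots,c_{2n-3}\}$ and therefore vanishes, reducing $P_n(\theta)$ to a nonzero numerical multiple of $\text{Im}\bigl[c_{2n-1}\,e^{-2in\theta}\bigr]$; this is $\theta$-independent only when $c_{2n-1} = 0$. Iterating gives $v = c_1 u$ for the Neumann--Zagier potential, contradicting Lemma \ref{NZ}.

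The main obstacle is the algebraic bookkeeping behind the induction: one must verify from the recursion underlying \eqref{18110808} that for every $n \ge 2$ the coefficient of the bare $c_{2n-1}$ monomial in $R_n$ is nonzero, and that every other monomial of $R_n$ only involves $c_3,\ldots,c_{2n-3}$. The three displayed terms in \eqref{18110808} (with bare coefficients $\tfrac12, -\tfrac13, \tfrac14$) make this pattern plausible, but extending it to all $n$ requires a careful analysis of the Neumann--Zagier recursion.
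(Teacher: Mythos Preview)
Your approach is essentially the paper's: both isolate the homogeneous degree $-2i$ part of $\Theta_{\mathcal{M}}$ at the first $i\ge 2$ with $c_{2i-1}\neq 0$ and show that $\text{Im}\bigl(c_{2i-1}/A^{2i}\bigr)$ cannot be a scalar multiple of $1/|A|^{2i}$---your ellipse parametrization $A=re^{i\theta}$ and the paper's comparison of $(x-\sqrt{-d}\,y)^{2i}$ with $(x^2+dy^2)^i$ are equivalent ways of seeing this. The paper streamlines your induction by invoking Lemma~\ref{NZ} at the outset to pick the \emph{smallest} such $i$, so that $c_3=\cdots=c_{2i-3}=0$ is a hypothesis rather than something to establish step by step; the structural fact about $R_n$ you flag as an obstacle is then needed only at that single index and is read off from the pattern in \eqref{18110808} (ultimately from the Neumann--Zagier recursion), just as the paper does implicitly.
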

\addtocounter{theorem}{-1}
\endgroup

\begin{proof}
By Theorem \ref{NZ1}, we have 
\begin{equation}\label{18101409}
\begin{gathered}
\Theta_{\mathcal{M}}(x,y)=-\text{Im}\;c_1\dfrac{\pi^2}{|A|^2}+\dfrac{1}{4}\text{Im}\Big[\dfrac{1}{2}c_3\Big(\dfrac{2\pi}{A}\Big)^4+\dfrac{1}{3}\Big(2c_3^2\dfrac{q}{A}-c_5\Big)\Big(\dfrac{2\pi}{A}\Big)^6\\
+\dfrac{1}{4}\Big(12c_3^3\dfrac{q^2}{A^2}-8c_3c_5\dfrac{q}{A}+c_7\Big)\Big(\dfrac{2\pi}{A}\Big)^8+\cdots \Big]
\end{gathered}
\end{equation}
where $A=\dfrac{1}{x+c_1y}$. By Theorem \ref{NZ}, there exists the smallest $2i-1$ (where $i\geq 2$) such that $c_{2i-1}\neq 0$.  As discussed in Section \ref{NZAVF}, we assume $c_1$ is of the form $\sqrt{-d}$ for some $d\in \mathbb{N}$ (and so $Q_{\mathcal{M}}(x,y)=x^2+dy^2$). 

Suppose there exists an analytic function 
\begin{equation*}
h(t)=a_1t+a_{i}t^{i}+\cdots
\end{equation*}
such that $\Theta_{\mathcal{M}}(x,y)$ in \eqref{18101409} is equal to 
\begin{equation}\label{18111603}
\begin{gathered}
h\Big(\dfrac{1}{Q_\mathcal{M}(x,y)}\Big)=\dfrac{a_1}{x^2+dy^2}+\dfrac{a_i}{(x^2+dy^2)^i}+\cdots
\end{gathered}
\end{equation}
Comparing the terms of the homogeneous degrees $-2$ and $-2i$ in \eqref{18101409} and \eqref{18111603}, we get  
\begin{equation}\label{18111606}
-\text{Im}\;c_1\dfrac{\pi^2}{|A|^2}=\dfrac{-\pi^2\text{Im}\;c_1}{x^2+dy^2}=\dfrac{a_1}{x^2+dy^2}
\end{equation} 
and 
\begin{equation}\label{18111604}
\begin{gathered}
\dfrac{1}{4}\dfrac{(-1)^i}{i}\text{Im}\Bigg(c_{2i-1}\Big(\dfrac{2\pi}{A}\Big)^{2i}\Bigg)=\dfrac{a_{i}}{(x^2+dy^2)^{i}}
\end{gathered}
\end{equation}
respectively. 
Since 
\begin{equation}\label{18102409}
\begin{gathered}
\dfrac{1}{4}\dfrac{(-1)^i}{i}\text{Im}\Bigg(c_{2i-1}\Big(\dfrac{2\pi}{A}\Big)^{2i}\Bigg)=\dfrac{(-1)^i(2\pi)^{2i}}{4i}\text{Im}\Bigg(\dfrac{c_{2i-1}\overline{A}^{2i}}{(A\overline{A})^{2i}}\Bigg)\\
=\dfrac{(-1)^i(2\pi)^{2i}}{4i}\text{Im}\Bigg(\dfrac{c_{2i-1}\overline{A}^{2i}}{(|A|^2)^{2i}}\Bigg)=\dfrac{(-1)^i(2\pi)^{2i}}{4i}\dfrac{\text{Im}\big(c_{2i-1}\overline{A}^{2i}\big)}{(x^2+dy^2)^{2i}}, 
\end{gathered}
\end{equation}
we get \eqref{18111606} and \eqref{18111604} are equivalent to 
\begin{equation*}
-\pi^2\text{Im}\; c_1=a_1
\end{equation*} 
and
\begin{equation}\label{18102410}
\dfrac{(-1)^i(2\pi)^{2i}}{4i}\dfrac{\text{Im}\big(c_{2i-1}\overline{A}^{2i}\big)}{(x^2+dy^2)^{2i}}=\dfrac{a_{i}}{(x^2+dy^2)^{i}}
\end{equation}
respectively. Now \eqref{18102410} implies
\begin{gather}
\dfrac{(-1)^i(2\pi)^{2i}}{4i}\dfrac{\text{Im}\big(c_{2i-1}\overline{A}^{2i}\big)}{(x^2+dy^2)^{i}}=a_{i}\nonumber\\
\Longrightarrow \dfrac{(-1)^i(2\pi)^{2i}}{4i}\text{Im}\big(c_{2i-1}\overline{A}^{2i}\big)=a_{i}(x^2+dy^2)^{i}\nonumber\\
\Longrightarrow \dfrac{(-1)^i(2\pi)^{2i}}{4i}\text{Im}\big(c_{2i-1}(x-\sqrt{-d}y)^{2i}\big)=a_{i}(x^2+dy^2)^{i}.\label{18102408}
\end{gather}
Comparing two expansions of $(x-\sqrt{-d}y)^{2i}$ and $(x^2+dy^2)^i$, it easily follows that there are no $c_{2i-1}\in \mathbb{C}, a_i\in \mathbb{R}$ satisfying \eqref{18102408} for two variables $x,y$.
\end{proof}

By Lemmas \ref{18112502} and \ref{18112601}, we get the following corollary:
\begin{corollary}\label{18102606}
Let $\mathcal{M}$, $\Theta_{\mathcal{M}}(x,y)$ and $Q_{\mathcal{M}}(x,y)$ be as above. Then 
\begin{equation}\label{18112904}
\Theta_{\mathcal{M}}(x,y)=\Theta_{\mathcal{M}}(z,w)
\end{equation}
is not equivalent to 
\begin{equation}\label{18112905}
\dfrac{1}{Q_{\mathcal{M}}(x,y)}=\dfrac{1}{Q_{\mathcal{M}}(z,w)}.
\end{equation}
In other words, \eqref{18112904} and \eqref{18112905} are different varieties. 
\end{corollary}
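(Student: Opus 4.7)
The plan is to obtain the corollary as an immediate consequence of Lemmas \ref{18112502} and \ref{18112601} via a short contrapositive argument; essentially all the substantive work has been done in those two lemmas, and the corollary simply stitches them together.

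More concretely, I would argue by contradiction. Suppose that
\begin{equation*}
\Theta_{\mathcal{M}}(x,y)=\Theta_{\mathcal{M}}(z,w) \quad\text{and}\quad \dfrac{1}{Q_{\mathcal{M}}(x,y)}=\dfrac{1}{Q_{\mathcal{M}}(z,w)}
\end{equation*}
do cut out the same variety (for $|x|+|y|$ and $|z|+|w|$ sufficiently large). Then the hypothesis of Lemma \ref{18112601} is satisfied, so that lemma furnishes an analytic function $h(t)$ with the property that
\begin{equation*}
\Theta_{\mathcal{M}}(x,y)=h\!\Big(\dfrac{1}{Q_{\mathcal{M}}(x,y)}\Big).
\end{equation*}
However, this is exactly the conclusion that Lemma \ref{18112502} forbids for a $1$-cusped hyperbolic $3$-manifold whose cusp shape is quadratic. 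The contradiction shows that the two varieties must differ.

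Since the heavy lifting is already packaged inside the two lemmas (the Weierstrass division argument giving the factorization $Q_{\mathcal{M}}\mid \Theta_{\mathcal{M}}$ in suitable local coordinates, and the homogeneous-degree comparison ruling out the existence of $h$ via the Neumann–Zagier expansion and Lemma \ref{NZ}), there is no real obstacle remaining at this stage: the only thing to check is that the logical direction of Lemma \ref{18112601} is the one we need, namely ``equivalence of the two varieties'' $\Rightarrow$ ``existence of $h$,'' which is precisely how Lemma \ref{18112601} is phrased. Thus the corollary follows in a line once both lemmas are in hand, and no additional analytic, algebraic, or model-theoretic input is required.
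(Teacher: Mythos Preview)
Your proposal is correct and matches the paper's approach exactly: the paper simply states ``By Lemmas \ref{18112502} and \ref{18112601}, we get the following corollary,'' and your contrapositive argument is precisely how those two lemmas combine. Nothing further is needed.
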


\subsection{Integer points on a $1$-dimensional definable set}
The lemma below is a simple application of Theorems \ref{18103002} and \ref{18103004}. In the lemma, we prove a uniform property concerning the number of integer points on a $1$-dimensional definable set in $\mathbb{R}_{an}$. 
\begin{lemma}\label{18110302}
Let $\mathcal{X}\subset \mathbb{R}^2(:=(x,y))$ be a $1$-dimensional definable set in $\mathbb{R}_{an}$ and $m,d(\in \mathbb{N})$ be fixed constants. Then there exists $c=c(\mathcal{X}, m, d)$ such that, for any $N$, the number of integer points on $\mathcal{X}$ satisfying
\begin{equation*}
N\leq x^2+dy^2< N+m 
\end{equation*} 
is uniformly bounded by $c$.  
\end{lemma}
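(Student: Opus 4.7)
The plan is to reduce the integer-point count on each cell of $\mathcal{X}$ to an injection into $\mathbb{Z} \cap [N, N+m)$, exploiting the observation that $\phi(x,y) := x^{2}+dy^{2}$ sends $\mathbb{Z}^{2}$ into $\mathbb{Z}$ because $d\in\mathbb{N}$. First, I would apply the o-minimal cell decomposition theorem (Chapter~3 of \cite{Van der}) to partition $\mathcal{X}$ into finitely many definable cells $C_{1},\dots,C_{k}$, each of dimension $0$ or $1$. The zero-dimensional cells together contribute at most $k$ integer points, independently of $N$, so it suffices to bound the integer points on each $1$-cell uniformly in $N$.

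Next I would refine the partition using the monotonicity theorem for definable functions, so that each $1$-cell $C_{i}$ is the graph of a continuous definable function over an open coordinate interval (viewing vertical cells as graphs over $y$) and the restriction $\phi|_{C_{i}}$ is either constant or strictly monotonic in the parametrizing coordinate. In the constant case, $\phi|_{C_{i}}\equiv c_{i}$ forces $C_{i}$ to lie inside the ellipse $\{x^{2}+dy^{2}=c_{i}\}$, which is bounded, so $|C_{i}\cap\mathbb{Z}^{2}|$ is a fixed finite number and this cell contributes nothing outside the range $N\in(c_{i}-m,\,c_{i}]$. In the strictly monotonic case, distinct points of $C_{i}$ have distinct $\phi$-values, and for every integer point $(n,k)\in C_{i}$ the value $n^{2}+dk^{2}$ is a non-negative integer. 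Hence the map
\[
C_{i}\cap\mathbb{Z}^{2}\cap\{N\le \phi<N+m\}\;\longrightarrow\;\mathbb{Z}\cap[N,N+m), \qquad (n,k)\longmapsto n^{2}+dk^{2},
\]
is a well-defined injection into a set of cardinality at most $m$, so $C_{i}$ contributes at most $m$ integer points uniformly in $N$. Summing over all cells yields the desired bound $c=c(\mathcal{X},m,d)$.

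The substantive content of the argument is short; what makes it work is the interplay between o-minimal structural finiteness (cell decomposition and monotonicity) and the arithmetic fact that $\phi$ takes integer values on $\mathbb{Z}^{2}$ when $d\in\mathbb{N}$. The main trap to avoid is trying to control the count via length estimates of the arcs $\phi^{-1}([N,N+m))\cap C_{i}$: on a $1$-cell whose image under $\phi$ grows only linearly in the coordinate, these arcs can have length on the order of $\sqrt{N}$, so no uniform bound comes directly from interval length. Switching focus from geometric length to the counting of integer $\phi$-values is the step that bypasses this difficulty, and it is also the place where the quadratic (hence integer-coefficient) nature of $Q_{\mathcal{M}}$ enters the whole program.
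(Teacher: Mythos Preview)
Your argument is correct, and it takes a genuinely different route from the paper's proof. The paper does not stay in $\mathbb{R}^{2}$: it lifts $\mathcal{X}$ to $\mathbb{R}^{3}$ by intersecting the cylinder $\mathcal{X}\times\mathbb{R}$ with the graph $\{z=x^{2}+dy^{2}\}$, obtaining a $1$-dimensional definable set $\mathcal{Z}'$; it then applies the fiber-dimension result (Theorem~\ref{18103004}) to conclude that only finitely many level sets $\mathcal{Z}'_{t}$ are $1$-dimensional, and the uniform fiber bound (Theorem~\ref{18103002}) to control the remaining ones. The final step is the same arithmetic observation you isolate: since $d\in\mathbb{N}$, integer points have integer $\phi$-values, so the annulus $N\le\phi<N+m$ meets at most $m$ integer level sets, giving $c=Mm$. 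Your approach replaces the fiber machinery by the more primitive tools of cell decomposition and the monotonicity theorem, and your injection argument on each monotone arc makes the role of the integrality of $\phi$ fully explicit. The paper's route has the advantage of invoking exactly the two o-minimal theorems used elsewhere in the proof of the main result, so it keeps the toolkit uniform; your route is more self-contained and shows that nothing beyond the basic structure theory of o-minimal sets is needed for this lemma.
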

\begin{proof}
Consider the following two $2$-dimensional definable sets in $\mathbb{R}^3$:
\begin{equation*}
\mathcal{X'}:=\{(x,y,z)\;|\;(x,y)\in \mathcal{X}\},
\end{equation*} 
\begin{equation*}
\mathcal{Y'}:=\{(x,y,z)\;|\;z=x^2+dy^2\}.
\end{equation*} 
Put
\begin{equation*}
\mathcal{Z'}:=\mathcal{X'}\cap \mathcal{Y'},
\end{equation*}
\begin{equation*}
\mathcal{Z'}_t:=\{(x,y)\;|\;(x,y,t)\in \mathcal{Z'}\},
\end{equation*}
and 
\begin{equation*}
\mathcal{Z'}(1):=\{t\;|\;\text{dim}\;\mathcal{Z'}_t=1\}.
\end{equation*}
Then $\mathcal{Z'}$ is a $1$-dimensional definable set and so $\mathcal{Z'}(1)$ is a definable set of dimension at most $0$ by Theorem \ref{18103004} (i.e., $\mathcal{Z'}(1)$ is a finite set). Let
\begin{equation*}
\mathcal{Z'}(1)=\{t_1,\dots, t_n\}.
\end{equation*}
By Theorem \ref{18103002}, there exists $C$ depending only on $Z'$ such that 
\begin{equation*}
|\mathcal{Z'}_{t}|<C
\end{equation*} 
for any $t\notin \mathcal{Z'}(1)$. Since the number of lattice points $N_{S_{t_i}}(\mathbb{Z})$ on $x^2+dy^2=t_i$ is finite for every $t_i\in \mathcal{Z'}(1)$, by letting 
\begin{equation*}
M=\text{max}\;\{C, N_{S_{t_1}}(\mathbb{Z}), \dots, N_{S_{t_n}}(\mathbb{Z})\},
\end{equation*} 
we get that, for any $t\in \mathbb{R}$, the number of integer points on
\begin{equation*}
\mathcal{Z'}_t=\{(x,y)\;|\;(x,y,t)\in \mathcal{Z'}\}
\end{equation*}
is uniformly bounded by $M$. Since 
\begin{equation*}
(x,y)\in \mathcal{Z'}_t \Longleftrightarrow (x,y)\in \mathcal{X},\quad x^2+dy^2=t,
\end{equation*}
we get the desired conclusion with $c=Mm$.  
\end{proof}


\section{Proof of the main theorem}\label{proof}
Now we prove our main theorem which we restate below. 

\begingroup
\def\thetheorem{\ref{18110705}}
\begin{theorem}
Let $\mathcal{M}$ be a $1$-cusped hyperbolic $3$-manifold whose cusp shape is quadratic. Then there exists $c=c(\mathcal{M})$ such that $N_{\mathcal{M}}(v)<c$ for any $v$. 
\end{theorem}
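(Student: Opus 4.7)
The plan is to carry out the roadmap from Section \ref{Ph}. Fix a hyperbolic Dehn filling $\mathcal{M}_{p/q}$ with $v=\text{vol}(\mathcal{M}_{p/q})$; the goal is to bound, uniformly in $(p,q)$, the number of $(p',q')\in\mathbb{Z}^2$ yielding a hyperbolic Dehn filling of the same volume. By Lemma \ref{18110304}, any such $(p',q')$ satisfies $|Q_{\mathcal{M}}(p,q)-Q_{\mathcal{M}}(p',q')|<m$ for a uniform $m=m(\mathcal{M})$. Because the cusp shape is quadratic, the normalization $Q_{\mathcal{M}}(x,y)=x^2+dy^2$ has integer coefficients, so this difference lies in the finite set $S:=\{k\in\mathbb{Z}:|k|<m\}$. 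Hence it suffices, for each $k\in S$, to uniformly bound the number of integer points in the fiber $\mathcal{Z}^k_{(p,q)}$ of the set $\mathcal{Z}^k\subset\mathbb{R}^4$ defined in \eqref{19032802}; by Lemma \ref{18101002}, each $\mathcal{Z}^k$ is definable in $\mathbb{R}_{an}$, unlocking the o-minimal toolkit.

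For any $(x,y)$, the constraint $Q_{\mathcal{M}}(z,w)=Q_{\mathcal{M}}(x,y)-k$ pins $\mathcal{Z}^k_{(x,y)}$ to a single circle, so $\dim\mathcal{Z}^k_{(x,y)}\in\{0,1\}$. On the open subset where the fiber is zero-dimensional, Theorem \ref{18103002} furnishes a uniform constant $M_k$ bounding the number of connected components---and hence the cardinality---of every such fiber. Thus each such $(p,q)$ contributes at most $M_k$ partners for this value of $k$.

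The core difficulty is the exceptional locus $\mathcal{Z}^k(1):=\{(x,y):\dim\mathcal{Z}^k_{(x,y)}=1\}$. A one-dimensional definable subset of the smooth circle $Q_{\mathcal{M}}(z,w)=Q_{\mathcal{M}}(x,y)-k$ must contain an open arc, and the vanishing of the analytic function $\Theta_{\mathcal{M}}(z,w)-\Theta_{\mathcal{M}}(x,y)$ on that arc propagates by real-analytic continuation to the entire circle. So for every $(x,y)\in\mathcal{Z}^k(1)$, $\Theta_{\mathcal{M}}$ is constant on the whole circle $Q_{\mathcal{M}}(z,w)=Q_{\mathcal{M}}(x,y)-k$. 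Set
\[
T:=\{c\in\mathbb{R}\,:\,\Theta_{\mathcal{M}}(z,w)\text{ is constant on }Q_{\mathcal{M}}(z,w)=c\}.
\]
The set $T$ is definable in $\mathbb{R}_{an}$, so by the o-minimality axiom it is either finite or contains an interval. If it contained an interval, then on the corresponding open annulus $\Theta_{\mathcal{M}}$ would depend only on $Q_{\mathcal{M}}$, defining an analytic $\phi$ (explicitly via a slice, e.g.\ $\phi(c)=\Theta_{\mathcal{M}}(\sqrt{c},0)$) with $\Theta_{\mathcal{M}}=\phi\circ Q_{\mathcal{M}}$ there; analytic continuation would globalize this identity and yield an analytic $h$ with $\Theta_{\mathcal{M}}(x,y)=h(1/Q_{\mathcal{M}}(x,y))$, contradicting Lemma \ref{18112502}. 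Hence $T$ must be finite.

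Consequently $\mathcal{Z}^k(1)$ is contained in the finite union of circles $\{Q_{\mathcal{M}}(x,y)=t+k:t\in T\}$, and each such circle carries only finitely many integer points. For each of these exceptional integers $(p,q)$, the fiber is an entire circle $Q_{\mathcal{M}}(z,w)=Q_{\mathcal{M}}(p,q)-k$, itself containing only finitely many integer points. Letting $B_k$ be the resulting total contribution, we obtain $N_{\mathcal{M}}(v)\leq\sum_{k\in S}(M_k+B_k)=:c(\mathcal{M})$ uniformly in $v$. The most delicate step is the third paragraph: upgrading a one-dimensional fiber to constancy of $\Theta_{\mathcal{M}}$ on the whole circle via analytic continuation, and then extracting, through the o-minimal dichotomy applied to $T$, a genuine contradiction with Lemma \ref{18112502}.
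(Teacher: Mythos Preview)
Your argument is correct and, in fact, takes a cleaner route than the paper's own proof. The paper proceeds by contradiction: it invokes Corollary~\ref{18102606} (itself built on the Weierstrass-division Lemma~\ref{18112601} plus Lemma~\ref{18112502}) to force $\dim \mathcal{Z}^k\le 2$, then applies Theorem~\ref{18103004} to get $\dim \mathcal{Z}^k(1)\le 1$, and in the worst case (a genuinely one--dimensional $\mathcal{W}(1)$) falls back on the bespoke Lemma~\ref{18110302} to bound integer points of $\mathcal{W}(1)$ inside each band $N\le x^2+dy^2<N+m$. Your approach sidesteps both Lemma~\ref{18112601} and Lemma~\ref{18110302} entirely: exploiting that each fiber sits inside a single ellipse, you use real-analytic continuation to upgrade any one--dimensional fiber to constancy of $\Theta_{\mathcal M}$ on the whole level curve, and then the o-minimality of your set $T$ together with Lemma~\ref{18112502} shows $T$ is finite, so $\mathcal{Z}^k(1)$ lies in finitely many ellipses rather than merely in some one--dimensional definable set. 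This yields a \emph{direct} uniform bound on $|\mathcal{Z}^k_{(p,q)}(\mathbb Z)|$, whereas the paper only bounds how many $(p,q)$ in a given band can have large fiber. What the paper's approach buys is that it never looks inside the fiber geometrically---it runs purely on abstract o-minimal dimension theory---so it would adapt more readily to situations where fibers are not confined to algebraic curves.

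One small point to tighten: your appeal to Lemma~\ref{18112502} needs $h$ analytic at $0$, not just on some interval $(0,\varepsilon)$. This follows because once $\Theta_{\mathcal M}=\phi\circ Q_{\mathcal M}$ holds globally, $h(t)=\phi(1/t)=\Theta_{\mathcal M}(1/\sqrt t,0)$, and Lemma~\ref{18101002} guarantees $\Theta_{\mathcal M}$ is analytic at infinity (with $\Theta_{\mathcal M}(x,0)$ even in $x$), which makes $h$ genuinely analytic at $t=0$. Alternatively, you could finish by citing Corollary~\ref{18102606} instead: since $\phi$ is strictly monotone for large arguments (its leading behaviour is $-\pi^2\,\mathrm{Im}\,c_1/c$), the identity $\Theta_{\mathcal M}=\phi\circ Q_{\mathcal M}$ forces \eqref{18112904} and \eqref{18112905} to coincide.
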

\addtocounter{theorem}{-1}
\endgroup

Let $\mathcal{Z}^k(\subset \mathbb{R}^2\times \mathbb{R}^2)$ and $\mathcal{Z}^k_{(x,y)}$ (for each $(x,y)\in \pi_1(\mathcal{Z}^k)$) be the same as \eqref{19032802} and \eqref{19032803} in Section \ref{Ph}. Note that, for each $k$, $\mathcal{Z}^k$ is a $2$-dimensional definable set in $\mathbb{R}_{an}$ by Corollary \ref{18102606}. 
 Also recall that, for a fixed $\mathcal{M}_{p/q}$, counting the number of Dehn fillings of $\mathcal{M}$ with volume $\text{vol}(\mathcal{M}_{p/q})$ is equivalent to counting the number of primitive integer points on $\bigcup_{-m< k< m} \mathcal{Z}^k_{(p,q)}$. 

Simply if $\mathcal{Z}^k_{(p,q)}$ is a zero dimensional definable set in $\mathbb{R}_{an}$ for any $(p,q)$, 
then $|\mathcal{Z}^k_{(p,q)}|$ is uniformly bounded by Theorem \ref{18103002} and so we get the desired result. Otherwise if $\mathcal{Z}^k_{(p,q)}$ is a $1$-dimensional definable set in $\mathbb{R}_{an}$, then a more careful analysis is required and a major part of the proof below will be devoted to deal with this situation. 

\begin{proof}
On the contrary, suppose there is an increasing sequence 
\begin{equation*}
n_1<n_2<n_3<\cdots
\end{equation*}
and a sequence of infinitely many Dehn fillings $\{M_{p_{ij}/q_{ij}}\}_{i\in \mathbb{N}, 1\leq j\leq n_i}$ such that
\begin{equation*}
\text{vol}(\mathcal{M}_{p_{ij}/q_{ij}})=\text{vol}(\mathcal{M}_{p_{ij'}/q_{ij'}})
\end{equation*}
for every $i\in \mathbb{N}$, $1\leq j,j'\leq n_i$. We further assume 
\begin{equation*}
\text{vol}(\mathcal{M}_{p_{11}/q_{11}})<\text{vol}(\mathcal{M}_{p_{21}/q_{21}})<\text{vol}(\mathcal{M}_{p_{31}/q_{31}})<\cdots.
\end{equation*}
By Lemma \ref{18110304}, for each $i$, there exists $N_i$ such that 
\begin{equation}\label{18111501}
N_i\leq p_{ij}^2+dq_{ij}^2< N_i+m\quad\quad (1\leq j\leq n_i),
\end{equation}
and thus every $(p_{ij},q_{ij}, p_{ij'},q_{ij'})$ (where $1\leq j, j'\leq n_i$) is contained in  
\begin{equation*}
\dfrac{1}{x^2+dy^2}=\dfrac{1}{z^2+dw^2+k}
\end{equation*}
for some $-m< k< m$.

Let $\mathcal{Z}^k, \mathcal{Z}^k_{(x,y)}$ be the same as above and denote the set of integer points in $\mathcal{Z}^k_{(x,y)}$ by $\mathcal{Z}^k_{(x,y)}(\mathbb{Z})$. Then the above hypothesis implies the following:\\
\\
\textit{$\bigstar$ For each $i$, there exist at least $n_i$ integral points $(p_{ij}, q_{ji})$ ($1\leq j\leq n_i$) in $\bigcup_{-m<k< m} \pi_1(\mathcal{Z}^k)$ such that 
\begin{enumerate}
\item $N_i\leq p_{ij}^2+dq_{ij}^2< N_i+m$\quad ($1\leq j\leq n_i$);
\item $\sum_{-m<k< m}|\mathcal{Z}^k_{(p_{ij}, q_{ij})}(\mathbb{Z})|>n_i$.  
\end{enumerate}
}
Now we prove 
\begin{claim}
Let $S=\{(x_l,y_l)\}$ be a sequence of integer points in $\pi_1(\mathcal{Z}^k)$. Then there exists $c$ such that, if 
\begin{equation*}
S_c:=\{(x_l, y_l)\;|\;|\mathcal{Z}^k_{(x_{l},y_{l})}(\mathbb{Z})|>c\},
\end{equation*}
there are at most $c$ elements in $S_c$ satisfying 
\begin{equation}
N_i\leq x_{l}^2+dy_{l}^2 < N_i+m
\end{equation} 
for each $i$.
\end{claim}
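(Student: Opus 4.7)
The plan is to stratify $\pi_1(\mathcal{Z}^k)$ by fiber dimension and then apply the o-minimal tools developed above. Since $\mathcal{Z}^k \subset \mathbb{R}^4$ is a $2$-dimensional definable set in $\mathbb{R}_{an}$ (by Corollary \ref{18102606}), each fiber $\mathcal{Z}^k_{(x,y)} \subset \mathbb{R}^2$ has dimension $0$, $1$, or $2$. Writing $\pi_1(\mathcal{Z}^k)(d) := \{(x,y) : \dim \mathcal{Z}^k_{(x,y)} = d\}$, Theorem \ref{18103004} forces $\dim \pi_1(\mathcal{Z}^k)(1) \leq 1$ and $\dim \pi_1(\mathcal{Z}^k)(2) \leq 0$; in particular $\pi_1(\mathcal{Z}^k)(2)$ is a finite set, of some cardinality $M_2$.

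Next I would apply Theorem \ref{18103002} to obtain a uniform bound $M$ on the number of connected components of the fibers of $\mathcal{Z}^k$. A $0$-dimensional fiber consists of at most $M$ points, so $|\mathcal{Z}^k_{(x,y)}(\mathbb{Z})| \leq M$ whenever $(x,y) \in \pi_1(\mathcal{Z}^k)(0)$. Consequently, as soon as $c > M$, every element of $S_c$ must lie in $\pi_1(\mathcal{Z}^k)(1) \cup \pi_1(\mathcal{Z}^k)(2)$. To count elements of $S_c$ inside a single annulus $N_i \leq x^2 + dy^2 < N_i + m$, I would treat the two remaining strata separately: the finite stratum $\pi_1(\mathcal{Z}^k)(2)$ contributes at most $M_2$ points in any annulus, while for the $1$-dimensional definable set $\pi_1(\mathcal{Z}^k)(1) \in \mathbb{R}_{an}$, Lemma \ref{18110302} supplies a constant $c_1$ bounding its integer points inside each such annulus. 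Taking any integer $c$ exceeding $\max(M,\, c_1 + M_2)$ then forces $|S_c \cap \{N_i \leq x^2 + dy^2 < N_i + m\}| \leq c_1 + M_2 < c$, which is exactly the claim.

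Because this is a direct assembly of the preceding o-minimal machinery with Lemma \ref{18110302}, I do not anticipate a substantive obstacle here; the genuinely hard inputs — namely the full transcendence-flavored statement $\dim \mathcal{Z}^k = 2$ of Corollary \ref{18102606}, resting on the Gelfond--Schneider argument in Lemma \ref{NZ} and the Weierstrass-division argument in Lemma \ref{18112502} — have already been done. The only subtle point to be careful about is that $\pi_1(\mathcal{Z}^k)(1)$ itself is definable in $\mathbb{R}_{an}$, so that Lemma \ref{18110302} genuinely applies; this is immediate from Theorem \ref{18103004}, which asserts definability of the stratification by fiber dimension.
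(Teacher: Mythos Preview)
Your argument is correct and uses the same three ingredients as the paper --- Theorem~\ref{18103002} for the uniform fiber bound, Theorem~\ref{18103004} for the fiber-dimension stratification, and Lemma~\ref{18110302} for the one-dimensional stratum --- so the approaches are essentially identical. Your organization is in fact a bit more economical: the paper first decomposes $\mathcal{Z}^k$ into connected components $\mathcal{W}$ and then runs a nested case analysis on $\dim\mathcal{W}$ and on $\dim\mathcal{W}(1)$, whereas you stratify $\pi_1(\mathcal{Z}^k)$ directly and handle all cases at once; the only small imprecision is that $\pi_1(\mathcal{Z}^k)(1)$ is a priori of dimension \emph{at most} $1$ rather than exactly $1$, but Lemma~\ref{18110302} (and its proof) applies equally well in the $\leq 1$-dimensional case, so this does not affect the argument.
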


Assuming the claim, the rest of the proof goes as follows. Let $S=\{(x_l,y_l)\}$ be the set of all integer points in $\bigcup_{-m< k< m} \pi_1(\mathcal{Z}^k)$. By the claim, we find $c$ such that, for
\begin{equation*}
S_c:=\big\{(x_l, y_l)\;|\;\sum_{-m<k< m}|\mathcal{Z}^k_{(x_{l},y_{l})}(\mathbb{Z})|>c\big\},
\end{equation*}
there are at most $c$ elements in $S_c$ satisfying 
\begin{equation}
N_i\leq x_{l}^2+dy_{l}^2 < N_i+m
\end{equation} 
for each $i$. But this contradicts the statement made in $\bigstar$, since $\{n_i\}$ is an increasing sequence and so $n_i>c$ for some $i$.\\
\\
\textit{Proof of the claim}
Let 
\begin{equation*}
\{\mathcal{Z}^k_i\;|\;i\in \mathcal{I}\}
\end{equation*}
be the set of all the connected components of $\mathcal{Z}^k$ and $\mathcal{W}$ be an element in $\{\mathcal{Z}^k_{i}\;|\;i\in \mathcal{I}\}$. Since a definable set has only finitely many connected components (i.e. $\mathcal{I}$ is a finite set), to prove the claim, it is enough to show it for $\mathcal{W}$. 

Now we split the problem into two cases depending on dim $\mathcal{W}$. 
\begin{enumerate}
\item \text{dim} $\mathcal{W}=2$\\
We further divide the problem into the following two subcases:
\begin{enumerate}
\item If
\begin{equation*}
\text{dim }\mathcal{W}_{(x,y)}=0
\end{equation*}
(i.e., $\mathcal{W}_{(x,y)}$ is a finite set) for any $(x,y)\in \pi_1\big(\mathcal{W}(\mathbb{Z})\big)$, 
then, by Theorem \ref{18103002}, there exists $c$ such that 
\begin{equation*}
|\mathcal{W}_{(x,y)}(\mathbb{Z})|<c
\end{equation*}
for any $(x,y)\in \pi_1\big(\mathcal{W}(\mathbb{Z})\big)$. Thus in this case, $S_c$ is the empty set and so the assertion of the claim is true. 

\item  Now suppose there exists $(x, y)\in \pi_1\big(\mathcal{W}(\mathbb{Z})\big)$ such that
\begin{equation}\label{18110301}
\text{dim }\mathcal{W}_{(x,y)}=1. 
\end{equation}
Let
\begin{equation*}
\mathcal{W}(1):=\{(x,y)\in \pi_1(\mathcal{W})\;:\;\text{dim}\;\mathcal{W}_{(x,y)}=1\}.
\end{equation*}
By Theorem \ref{18103004}, $\mathcal{W}(1)$ is a definable set in $\mathbb{R}_{an}$ and its dimension is at most $1$. 
\begin{enumerate}
\item Suppose dim $\mathcal{W}(1)=0$ and put
\begin{equation*}
\mathcal{W}(1)=\{(x_{i_1},y_{i_1}),\dots, (x_{i_k}, y_{i_k})\}.
\end{equation*}
By Theorem \ref{18103002}, there exists a universal constant $c'$ such that 
\begin{equation*}
|\mathcal{W}_{(x,y)}|<c'
\end{equation*}
for any $(x,y)\notin \mathcal{W}(1)$. Since $\mathcal{W}_{(x_i,y_i)}(\mathbb{Z})$ is a finite set for every $(x_i, y_i)\in \mathcal{W}(1)$, by letting 
\begin{equation*}
c=\max_{1\leq j\leq k}\,\{|\mathcal{W}_{(x_{i_j},y_{i_j})}(\mathbb{Z})|, c'\},
\end{equation*}
we again get $S_c$ is the empty set and so obtain the desired conclusion. 

\item Now assume dim $\mathcal{W}(1)=1$. First, by Theorem \ref{18103002}, there exists $c'$ such that
\begin{equation*}
|\mathcal{W}_{(x,y)}|<c'
\end{equation*}
for any $(x,y)\notin \mathcal{W}(1)$. Then the following 
\begin{equation*}
S_{c'}:=\big\{(x,y)\in S\;|\;|\mathcal{W}_{(x,y)}(\mathbb{Z})|\geq c'\big\}
\end{equation*}
is a subset of $\mathcal{W}(1)$. By Lemma \ref{18110302}, there exists $c''$ such that, for any $N$, the number of integer points on $\mathcal{W}(1)$ satisfying 
\begin{equation*}
N\leq x^2+dy^2< N+m
\end{equation*}
is uniformly bounded by $c''$. By letting 
\begin{equation*}
c=\max\,\{c',c''\},
\end{equation*}
we get the desired result. 
\end{enumerate}
\end{enumerate}

\item \text{dim} $\mathcal{W}=1$
\begin{enumerate}
\item If dim $\mathcal{W}_{(x,y)}=0$ for any $(x,y)\in \pi_1\big(\mathcal{W}(\mathbb{Z})\big)$, then, by Theorem \ref{18103002}, we find $c$ satisfying 
\begin{equation*}
|\mathcal{W}_{(x,y)}|<c
\end{equation*}
for every $(x,y)\in \pi_1\big(\mathcal{W}(\mathbb{Z})\big)$. So $S_c$ is the empty set and we get the desired result.
\item Suppose there exists $(x,y)\in \pi_1\big(\mathcal{W}(\mathbb{Z})\big)$ such that dim $\mathcal{W}_{(x,y)}=1$ and put
\begin{equation*}
\mathcal{W}(1):=\{(x,y)\in \pi_1(\mathcal{W})\;|\;\text{dim}\;\mathcal{W}_{(x,y)}=1\}.
\end{equation*}
By Theorem \ref{18103004}, $\mathcal{W}(1)$ is a definable set of dimension $0$. If
\begin{equation*}
\mathcal{W}(1)=\{(x_{i_1}, y_{i_1}), \dots, (x_{i_k}, y_{i_k})\},
\end{equation*}
then $\mathcal{W}_{(x_i,y_i)}(\mathbb{Z})$ is a finite set for each $(x_i, y_i)\in \mathcal{W}(1)$. By Theorem \ref{18103002}, there exists $c'$ such that 
\begin{equation*}
|\mathcal{W}_{(x,y)}|<c'
\end{equation*}
for every $(x,y)\notin \mathcal{W}(1)$. Thus, by letting 
\begin{equation*}
c:=\max_{1\leq j\leq k}\,\big\{|\mathcal{W}_{(x_{i_j},y_{i_j})}(\mathbb{Z})|, c'\big\},
\end{equation*}
we get that every point $(x,y)$ in $\pi_1\big(\mathcal{W}(\mathbb{Z})\big)$ satisfies
\begin{equation*}
|\mathcal{W}_{(x,y)}(\mathbb{Z})|<c.
\end{equation*}  
In conclusion, $S_c$ is again the empty set and this finishes the proof. 
\end{enumerate}
\end{enumerate}
\end{proof}

\section{Conjectures}\label{18110807}
In this section, we propose several conjectures concerning the main question (i.e. Question \ref{18102801}). 

Let us first begin with some basic definitions in number theory.

\begin{definition}
For $\alpha:=(a_1/b_1,\dots, a_n/b_n)\in \mathbb{Q}^n$ where $\gcd\,(a_i, b_i)=1$ for each $i$, we define the height of it as 
\begin{equation*}
H(\alpha):=\max_{1\leq i\leq n}\,\{|a_i|, |b_i|\}.
\end{equation*}
\end{definition}

\begin{definition}
For $X\subset \mathbb{R}^n$, let $X(\mathbb{Q})$ denote the subset of points with rational coordinates and set 
\begin{equation*}
X(\mathbb{Q}, T):=\{P\in X(\mathbb{Q}), \quad H(P)\leq T\}.
\end{equation*}
Define the density function of $X$ to be 
\begin{equation*}
N(X, T):=|X(\mathbb{Q}, T)|.
\end{equation*}
\end{definition}

In \cite{BP}, Bombieri-Pila studied the growth rate of $N(X,T)$ when $X$ is the graph of a  transcendental function. In particular, they showed that it grows fairly slowly and so rational points on $X$ are generally very sparse. 
\begin{theorem}[Bombieri-Pila]\label{18102601}
Let $f\;:\;[0,1]\longrightarrow \mathbb{R}$ be a transcendental function, and $X$ be the graph of $f$. Then, for any $\epsilon>0$, there exists $c=c(X, \epsilon)$ such that 
\begin{equation*}
N(X, T)\leq cT^{\epsilon}.
\end{equation*}  
\end{theorem}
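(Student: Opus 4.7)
The plan is to carry out the Bombieri--Pila determinant (``polynomial method'') argument. Fix $\epsilon > 0$; I will choose a large integer $d = d(\epsilon)$ and a short length scale $\delta = \delta(T, \epsilon)$. The strategy is to partition $[0,1]$ into $O(1/\delta)$ subintervals of length $\delta$, show that on each subinterval all rational points of $X$ of height at most $T$ lie on a single algebraic curve of degree at most $d$, and then invoke transcendence of $f$ to bound the number of points of $X$ on each such curve uniformly in $P$. (As is customary in results of this type, I assume the implicit real-analyticity of $f$, without which the conclusion is already false.)

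On a fixed subinterval $I$ of length $\delta$ centered at $x_0$, let $P_1 = (x_1, f(x_1)), \ldots, P_N$ be the rational points of $X(\mathbb{Q}, T)$ with $x_i \in I$. Set $D = \binom{d+2}{2}$ and form the $N \times D$ matrix $M$ whose $(i,(a,b))$-entry is $x_i^a f(x_i)^b$, as $(a,b)$ ranges over monomials with $a + b \leq d$. If $N \geq D$, choose any $D \times D$ minor $\Delta$. Two competing estimates drive the proof. An \emph{analytic upper bound} comes from expanding each $f(x_i)$ in a Taylor series about $x_0$: by multilinearity, the contributions in which two rows involve the same power of $(x - x_0)$ cancel, so the surviving terms correspond to $D$ distinct non-negative exponents summing to at least $D(D-1)/2$, yielding $|\Delta| \leq C_1(d, f)\, \delta^{D(D-1)/2}$. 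An \emph{arithmetic lower bound} follows from clearing denominators in the $x_i, f(x_i)$: the product of these denominators is an integer of size at most $T^{2dD}$, so $\Delta = 0$ or $|\Delta| \geq T^{-2dD}$.

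Setting $\delta = T^{-\lambda}$ with $\lambda$ a fixed multiple of $d/(D-1)$ (so $\lambda \to 0$ as $d \to \infty$) forces every minor to vanish. Consequently $P_1, \ldots, P_N$ satisfy a single nonzero polynomial equation $P(x, y) = 0$ of total degree at most $d$. Because $f$ is transcendental, the graph $X$ is not contained in the zero locus of any such $P$, so $X \cap \{P = 0\}$ is the zero set of the nonzero real-analytic function $g_P(x) := P(x, f(x))$ on the compact interval $[0,1]$, hence finite. A compactness argument over the projective parameter space of normalized polynomials of degree $\leq d$ produces a uniform bound $B(d)$ on the number of such zeros. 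Summing over the $O(T^{\lambda})$ subintervals yields
\begin{equation*}
N(X, T) \leq B(d) \cdot O(T^{\lambda}),
\end{equation*}
and taking $d$ large enough that $\lambda < \epsilon$ gives $N(X, T) \leq c(\epsilon, f)\, T^{\epsilon}$, as desired.

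The main obstacle is the uniform upper bound $B(d)$ on $|X \cap \{P = 0\}|$ as $P$ varies over polynomials of degree at most $d$: transcendence alone gives finiteness for each fixed $P$, but uniformity requires passing to the compact projective parameter space of normalized coefficients and invoking upper semicontinuity of the zero-counting function of the family $g_P(x) = P(x, f(x))$ of nontrivial real-analytic functions on $[0,1]$; concretely this can be done by an inductive Rolle-type argument that successively differentiates $g_P$ to reduce oscillation counts. A subsidiary technical point is tracking the dependence of $C_1(d, f)$ on $d$ well enough that the balance $\delta = T^{-\lambda}$ remains effective, which reduces to uniform bounds on the Taylor coefficients of $f$ on compact subintervals of $[0,1]$.
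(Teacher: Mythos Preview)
The paper does not give a proof of this theorem: it is quoted in Section~\ref{18110807} as a known result of Bombieri and Pila \cite{BP}, serving only as motivation for the Pila--Wilkie theorem and the conjectures that follow. There is therefore nothing in the paper to compare against.

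That said, your sketch is a faithful outline of the original Bombieri--Pila determinant method, and the two technical issues you flag (uniformity of $B(d)$ via compactness of the coefficient space, and tracking $C_1(d,f)$) are indeed the real work. One small caution: the theorem as literally stated in the paper says only ``transcendental function,'' and you are right to add a smoothness hypothesis---the original result is for $f \in C^{\infty}$ (or real-analytic), and fails without it. Your analytic upper bound on $|\Delta|$ as written leans on Taylor expansion and hence on analyticity; the $C^{\infty}$ version requires the slightly more delicate mean-value/divided-difference estimate from \cite{BP}, but the structure of the argument is the same.
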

For instance, if $X$ is defined by $y=\dfrac{1}{a^x}$ where $a\in \mathbb{N}$, then the density function grows logarithmically and so it satisfies the assertion of the theorem. 

To generalize the above theorem on $X$ of dimension $2$, similar to the the previous case, we first need to assume $X$ itself is not semialgebraic. 

Now consider  
\begin{equation*}
X:=\{(x,y,z)\;:\;z=x^y, \quad x, y\in [1,2]\}.
\end{equation*}
In the example, $X$ itself is clearly not semialgebraic (otherwise $z=a^x$ would be algebraic for $a\in [1,2]$), but each $y\in \mathbb{Q}$ gives rise to a semialgebraic curve embedded in $X$ containing many rational points. Thus, to accommodate this feature, we make the following definition. 
\begin{definition}
Let $X\subset \mathbb{R}^n$. The algebraic part of $X$, denoted $X^{alg}$, is the union of all connected semialgebraic subsets of $X$ of positive dimension. The transcendental par of $X$ is the complement $X-X^{alg}$, and denoted by $X^{trans}$. 
\end{definition}

Theorem \ref{18102601} was further generalized to surfaces by Pila as follows \cite{P}: 

\begin{theorem}[Pila]\label{18102602}
Let $M$ be a compact subanalytic surface. For any $\epsilon>0$, there exists $c=c(M, \epsilon)$ such that 
\begin{equation*}
N(X^{trans}, T)\leq cT^{\epsilon}.
\end{equation*}  
\end{theorem}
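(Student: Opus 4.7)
The plan is to generalize the Bombieri--Pila determinant method from curves to two-dimensional subanalytic sets, and then iterate the curve statement of Theorem \ref{18102601} on the algebraic slices that appear. First I would invoke a Yomdin--Gromov--Gabrielov $C^r$-parametrization of $M$: for each integer $r$, there exist finitely many real analytic maps $\phi_i : (0,1)^2 \to M$ whose images cover $M$ and whose partial derivatives up to order $r$ are bounded by $1$. This kind of parametrization is available for any compact subanalytic set by Hironaka's uniformization combined with Gromov's algebraic lemma adapted to the subanalytic category. Such a reduction replaces the problem by counting, for each $i$, the rational points of height $\leq T$ in $\phi_i\big((0,1)^2\big)$.

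Next I would run a two-variable determinant argument. Fix $d = d(\epsilon)$ and let $m_1, \dots, m_D$ with $D = \binom{d+2}{2}$ be the monomials of degree $\leq d$ on $\mathbb{R}^n$. Partition the parameter square into boxes of side $T^{-\alpha}$ for a suitable $\alpha$. For any $D$ rational points $P_1, \dots, P_D$ of height $\leq T$ lying in a single box, the determinant $\det\big(m_j(P_k)\big)$ is either zero, so the points lie on a common algebraic hypersurface of degree $\leq d$, or a nonzero rational with denominator bounded by $T^{cD}$, hence in absolute value at least $T^{-cD}$. Taylor expanding $\phi_i$ to order $r$ inside the box and using the derivative bounds, the determinant is shown to be of size at most $T^{-\alpha\cdot \mu(r,d)}$ for an exponent $\mu$ that tends to infinity with $r$; once $r$ is large enough in terms of $d$, the analytic bound beats the arithmetic lower bound, forcing the determinant to vanish. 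Pigeonholing the boxes shows that all rational points of height $\leq T$ in $\phi_i\big((0,1)^2\big)$ lie on $O_\epsilon(T^\epsilon)$ algebraic hypersurfaces $V$ of degree $\leq d$.

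The third step is the transcendence reduction. Each intersection $\phi_i\big((0,1)^2\big)\cap V$ is a subanalytic set of dimension at most one. Its one-dimensional connected components are either semialgebraic, in which case they lie in $M^{alg}$ and contribute no points to $M^{trans}$, or they are transcendental subanalytic arcs, to which the curve case (Theorem \ref{18102601}, applied after a further $C^r$ parametrization of the arc) gives at most $c(V,\epsilon)T^{\epsilon}$ rational points of height $\leq T$. Summing over the $O_\epsilon(T^\epsilon)$ hypersurfaces $V$ and the finitely many charts $\phi_i$, and then replacing $\epsilon$ by $\epsilon/2$ at the start, one obtains $N(M^{trans},T) \leq c T^{\epsilon}$.

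The main obstacle is the $C^r$-parametrization lemma in the subanalytic setting with the quantitative derivative bounds: this is the technical core and requires a careful induction on dimension based on Hironaka's desingularization together with rescaling of local coordinates, so that every partial derivative up to order $r$ is controlled uniformly. A secondary subtlety is the transcendence dichotomy in the third step: one must ensure that a one-dimensional semialgebraic component of $\phi_i\big((0,1)^2\big)\cap V$ really comes from a positive-dimensional connected semialgebraic subset of $M$ itself (so that it is absorbed into $M^{alg}$), rather than being an artifact introduced by the parametrization $\phi_i$; this is handled by observing that semialgebraicity is a local analytic property and extending the component along $M$.
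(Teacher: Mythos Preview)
The paper does not prove this theorem; it is quoted from Pila's paper \cite{P} as a known background result, with no argument given. Your outline is essentially Pila's own strategy: a $C^r$ reparametrization of the surface (the Yomdin--Gromov step), the determinant method to cover the rational points by $O_\epsilon(T^\epsilon)$ algebraic hypersurfaces of bounded degree, and then a reduction to the one-dimensional Bombieri--Pila theorem on the resulting curves.

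One point deserves more care in your third step. You write that each transcendental arc in $\phi_i((0,1)^2)\cap V$ contributes at most $c(V,\epsilon)T^\epsilon$ points and then sum over the $O_\epsilon(T^\epsilon)$ hypersurfaces $V$. For this to give $O_\epsilon(T^{2\epsilon})$ rather than something uncontrolled, the constant must be uniform in $V$; the Bombieri--Pila constant in Theorem~\ref{18102601} depends on the individual transcendental function, not merely on the degree of $V$. In Pila's argument this uniformity is obtained by treating the family of intersections $M\cap V$, as $V$ ranges over hypersurfaces of degree $\le d$, as a single definable family and reparametrizing it uniformly, so that the number of arcs and the derivative bounds on each arc are controlled independently of the particular $V$. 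Your sketch implicitly assumes this, but it is exactly the place where the subanalytic (or o-minimal) structure does real work beyond the single-curve case, and it should be made explicit.
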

Finally the complete generalization was obtained by Pila-Wilkie using o-minimality \cite{PW}.  

\begin{theorem}[Pila-Wilkie]\label{18102603}
Let $\chi$ be an o-minimal structure and $X$ be a definable set in $\chi$. Then, for any $\epsilon$, there exists $c=c(X, \epsilon)$ such that 
\begin{equation*}
N(X^{trans},T)\leq c(Z, \epsilon)T^{\epsilon}.
\end{equation*}
\end{theorem}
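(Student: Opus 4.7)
The plan is to prove the theorem by induction on $d=\dim X$, combining an o-minimal reparametrization theorem with the Bombieri--Pila determinant method. After covering $X$ by finitely many affine rescalings I may assume $X\subset[0,1]^n$. The base case $d=0$ is trivial, and the case $d=1$ reduces essentially to Theorem \ref{18102601} applied chart by chart to the finitely many analytic arcs making up a $C^r$-cell decomposition of $X$, noting that any arc containing more than finitely many collinear rational points of height $\leq T$ must be semialgebraic and hence absorbed into $X^{\text{alg}}$.

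The main technical step, and the place I expect to spend most of the work, is the o-minimal reparametrization theorem: for every integer $r\geq 1$ and every definable $X\subset[0,1]^n$ of dimension $d$ there is a finite family of definable $C^r$ maps $\phi_i:(0,1)^d\to X$ whose images cover $X$ and all of whose partial derivatives of order $\leq r$ are uniformly bounded by $1$. This is the o-minimal analog of Yomdin's $C^r$-parametrization for semialgebraic sets, and I would prove it by induction on $d$ using the $C^r$ cell decomposition theorem available in any o-minimal structure together with a careful rescaling argument that controls the blow-up of derivatives near cell boundaries.

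With the reparametrization in hand, the Bombieri--Pila determinant method takes over. Fix one chart $\phi:(0,1)^d\to X$ and choose degree $D=D(d,\epsilon)$ and smoothness $r=r(d,\epsilon)$ sufficiently large. Subdivide $(0,1)^d$ into subcubes of side length $\sim T^{-\kappa}$ for an appropriate $\kappa=\kappa(d,D,r)$; Taylor expansion using the derivative bound shows that on each subcube $\phi$ is so well-approximated by a polynomial of degree $<D$ that a Vandermonde-type determinant vanishes on every set of rational points of height $\leq T$ in its image, forcing those points onto a single algebraic hypersurface of degree $\leq D$. The total number of hypersurfaces produced is $O(T^{d\kappa})$, which can be made $O(T^\epsilon)$ by choosing $D$ and $r$ large. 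Intersecting $X$ with each such hypersurface $H$ yields a definable set of dimension $\leq d-1$, to which the inductive hypothesis applies, contributing $O(T^\epsilon)$ transcendental rational points. Positive-dimensional semialgebraic pieces of $X\cap H$ lie by definition in $X^{\text{alg}}$ and so are excluded from $X^{\text{trans}}(\mathbb{Q},T)$; this bookkeeping between the algebraic and transcendental parts is the second subtle point. Summing and halving $\epsilon$ at the outset gives the claimed bound $N(X^{\text{trans}},T)\leq c(X,\epsilon)T^\epsilon$.
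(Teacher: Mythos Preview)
The paper does not prove this theorem; it is quoted without proof as a result of Pila and Wilkie \cite{PW}, used only to motivate the conjectures in Section~\ref{18110807}. So there is no proof in the paper to compare your proposal against.

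That said, your outline is a faithful high-level sketch of the actual Pila--Wilkie argument: o-minimal $C^r$-reparametrization (the Yomdin--Gromov analogue) followed by the Bombieri--Pila determinant method and induction on $\dim X$. One point worth flagging is that the induction as you describe it is slightly too naive: the constant $c(X,\epsilon)$ you obtain for each intersection $X\cap H$ depends on $H$, and there are $O(T^{\epsilon})$ such hypersurfaces, so summing gives a constant that depends on $T$. The fix in the original paper is to prove a version of the theorem \emph{uniform in definable families}: for a definable family $\{X_a\}_{a\in A}$ one gets $N(X_a^{\text{trans}},T)\le c(\epsilon)T^\epsilon$ with $c$ independent of $a$. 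Since the family of degree-$\le D$ hypersurface sections of $X$ is itself definable, this uniform version is what makes the induction close. Your sketch would need that strengthening to be complete.
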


Now a strategy for answering Question \ref{18102801} using the above theorem goes as  follows. As usual, we denote a $1$-cusped hyperbolic $3$-manifold and its volume function by $\mathcal{M}$ and $\Theta_{\mathcal{M}}(x,y)$ respectively. Let $\mathcal{Z}$ be an analytic variety defined by
\begin{equation}\label{18102803}
\Theta_{\mathcal{M}}(x,y)=\Theta_{\mathcal{M}}(z,w).
\end{equation}
By Theorem \ref{18102603}, most rational points of $\mathcal{Z}$ are lying on the semialgebraic subsets of $\mathcal{Z}$ and thus if 
\begin{equation*}
\text{vol}(\mathcal{M}_{p/q})=\text{vol}(\mathcal{M}_{p'/q'}), 
\end{equation*}
it is likely that $(p,q,p',q')$ is contained in an semialgebraic subset of $\mathcal{Z}$. An obvious semialgebraic set of $\mathcal{Z}$ is the surface defined by 
\begin{equation*}
x=z, \quad y=w.
\end{equation*}
In general, we do not know how to prove it but, based on our study, propose the following conjecture:

\begin{conjecture}
Let $\mathcal{M}$ and $\mathcal{Z}$ be as above. Then any semialgebraic subset of $\mathcal{Z}$ is contained in 
\begin{equation*}
z=ax+by, \quad w=cx+dy
\end{equation*}
for some $a,b,c,d\in \mathbb{Z}$ satisfying
\begin{equation}\label{19032805}
Q_{\mathcal{M}}(x,y)=Q_{\mathcal{M}}(ax+by, cx+dy).
\end{equation}
\end{conjecture}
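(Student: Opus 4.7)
The plan is to reduce the conjecture to an asymptotic matching problem for an algebraic map. Let $S\subset\mathcal{Z}$ be an irreducible semialgebraic subset of positive dimension. After possibly swapping the roles of $(x,y)$ and $(z,w)$ and restricting to a Zariski-dense open subset, we may assume that $S$ is (locally) the graph of a real algebraic map $\phi\colon(x,y)\mapsto(z,w)$ defined on a domain $U$ with $|x|+|y|$ large. On $U$ we then have the analytic identity $\Theta_{\mathcal{M}}(x,y)\equiv\Theta_{\mathcal{M}}(\phi(x,y))$, and the goal becomes showing that $\phi$ is linear with integer coefficients preserving $Q_{\mathcal{M}}$.

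The first step is to match the leading ($-2$) piece of the Neumann--Zagier expansion (Theorem \ref{NZ1}). This gives
\begin{equation*}
\frac{1}{Q_{\mathcal{M}}(x,y)}=\frac{1}{Q_{\mathcal{M}}(\phi(x,y))}+\text{(lower-order analytic terms)}.
\end{equation*}
Since $\phi$ is algebraic, the quantity $Q_{\mathcal{M}}(x,y)-Q_{\mathcal{M}}(\phi(x,y))$ is an algebraic function that is asymptotically bounded on $U$. The same discreteness argument as in the proof of Lemma \ref{18110304}, combined with the rationality of the coefficients of $Q_{\mathcal{M}}$ afforded by the quadratic cusp shape hypothesis, should then upgrade this asymptotic bound to the exact identity $Q_{\mathcal{M}}(x,y)\equiv Q_{\mathcal{M}}(\phi(x,y))$ on $U$. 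In particular $S$ is contained in the algebraic hypersurface $\{Q_{\mathcal{M}}(x,y)=Q_{\mathcal{M}}(z,w)\}$.

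Next, match the degree $-2i$ piece of $\Theta_{\mathcal{M}}$, where $i\ge 2$ is the smallest index with $c_{2i-1}\neq 0$ (Lemma \ref{NZ}). The resulting identity involves $\text{Im}\bigl(c_{2i-1}\,\overline{A(x,y)}^{\,2i}\bigr)/Q_{\mathcal{M}}(x,y)^{2i}$, and after clearing denominators using the identity already established at the leading order it becomes an exact polynomial identity in $(x,y,\phi(x,y))$. Because the orthogonal group of the positive-definite form $Q_{\mathcal{M}}$ is a compact one-parameter family of linear maps, and because the ``twist'' contributed by the non-real algebraic coefficient $c_{2i-1}$ cannot be absorbed by a non-linear algebraic map, this should force $\phi$ to be linear, that is $(z,w)=(ax+by,cx+dy)$. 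Integrality of $(a,b,c,d)$ then drops out from the fact that $Q_{\mathcal{M}}$ has integer coefficients and any linear automorphism sending each integer level set of $Q_{\mathcal{M}}$ to itself must already lie in $GL_{2}(\mathbb{Z})$.

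The main obstacle is this third step: rigorously ruling out ``accidental'' non-linear algebraic automorphisms of the full series $\Theta_{\mathcal{M}}$. Controlling a single higher-order coefficient may not by itself suffice, so one likely needs a simultaneous constraint from all the $c_{2i-1}$ at once, which presumably requires a strengthening of the Gelfond--Schneider input already exploited in Lemma \ref{NZ} and is at least as deep as the expected (but still unproven) transcendence of the Neumann--Zagier potential function itself.
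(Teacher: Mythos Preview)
The statement you are attempting is presented in the paper as an open \emph{conjecture} (in Section~\ref{18110807}), not as a theorem; the paper offers no proof, only motivation via the Pila--Wilkie theorem. So there is nothing in the paper to compare your argument against, and any complete proof would be new.

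Your own proposal is, as you acknowledge in the last paragraph, not a proof but a strategy with an identified obstruction. A few additional comments on the earlier steps. First, the conjecture as stated in the paper is for a general $1$-cusped $\mathcal{M}$, with no quadratic cusp-shape assumption; your second step explicitly invokes ``the rationality of the coefficients of $Q_{\mathcal{M}}$ afforded by the quadratic cusp shape hypothesis,'' which is an extra hypothesis not present in the conjecture. Second, even granting that hypothesis, the passage from ``$Q_{\mathcal{M}}(x,y)-Q_{\mathcal{M}}(\phi(x,y))$ is a bounded algebraic function'' to ``it is identically zero'' is not justified: Lemma~\ref{18110304} and the subsequent Remark give discreteness of the values at \emph{integer} points, not an identity of functions on a positive-dimensional $U$, and bounded algebraic functions on unbounded domains need not be constant (e.g.\ $\sqrt{x^{2}+1}-x$). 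You would need a separate argument here, presumably exploiting the homogeneity structure of $Q_{\mathcal{M}}$ together with the algebraicity of $\phi$. Third, your final integrality claim (``any linear automorphism sending each integer level set of $Q_{\mathcal{M}}$ to itself must already lie in $GL_{2}(\mathbb{Z})$'') is again only available when $Q_{\mathcal{M}}$ has rational coefficients, i.e.\ in the quadratic cusp-shape case.

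In short: the paper does not prove this statement, and your sketch, while a reasonable outline of where the difficulties lie, both adds a hypothesis not in the conjecture and leaves the decisive step (ruling out non-linear algebraic automorphisms of $\Theta_{\mathcal{M}}$) unresolved, as you yourself note.
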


Since there any only finitely many $(a,b,c,d)\in \mathbb{Z}^4$ satisfying \eqref{19032805}, the above conjecture, combining with the Pila-Wilkie theorem, implies the following conjecture, which answers Question \ref{18102801} affirmatively for $n=1$.
\begin{conjecture}
Let $\mathcal{M}$ be a $1$-cusped hyperbolic $3$-manifold. Then there exists 
\begin{equation*}
\{(a_i,b_i,c_i,d_i)\in \mathbb{Z}^4\}_{1\leq i\leq k} 
\end{equation*}
such that if
\begin{equation*}
\text{\normalfont\ vol}(\mathcal{M}_{p/q})=\text{\normalfont\ vol}(\mathcal{M}_{p'/q'}),
\end{equation*}
then 
\begin{equation*}
p=a_ip'+b_iq', \quad q=c_ip'+d_iq'
\end{equation*}
for some $1\leq i\leq k$. 
\end{conjecture}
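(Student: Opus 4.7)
The plan is to derive this conjecture from the preceding one (on the structure of semialgebraic subsets of $\mathcal{Z}$) together with the Pila--Wilkie counting theorem. First I would mimic the reduction used in the proof of the main theorem: by (a version of) Lemma \ref{18110304}, every volume coincidence $(p,q,p',q') \in \mathbb{Z}^4$ lies on one of the varieties
\begin{equation*}
\mathcal{Z}^k := \Big\{\Theta_\mathcal{M}(x,y) = \Theta_\mathcal{M}(z,w)\Big\} \cap \Big\{Q_\mathcal{M}(x,y) = Q_\mathcal{M}(z,w) + k\Big\}
\end{equation*}
for $k$ from a bounded set. Each $\mathcal{Z}^k$ is a globally subanalytic subset of $\mathbb{R}^4$, by the same argument as in Lemma \ref{18101002}, so it decomposes as $\mathcal{Z}^k = (\mathcal{Z}^k)^{alg} \cup (\mathcal{Z}^k)^{trans}$.

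Next I would invoke the preceding conjecture: $(\mathcal{Z}^k)^{alg}$ is contained in a finite union of linear subspaces $\{z = ax+by,\, w = cx+dy\}$ with $(a,b,c,d) \in \mathbb{Z}^4$ satisfying $Q_\mathcal{M}(x,y) = Q_\mathcal{M}(ax+by, cx+dy)$. Because $Q_\mathcal{M}(p,q) = |p + c_1 q|^2$ with $c_1$ non-real is positive definite, its integer automorphism group is finite. Assembling these matrices over the finitely many admissible values of $k$ gives the candidate list $\{(a_i, b_i, c_i, d_i)\}$.

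The remaining integer coincidences lie on $\bigcup_k (\mathcal{Z}^k)^{trans}$. Applying the Pila--Wilkie theorem (Theorem \ref{18102603}) yields
\begin{equation*}
N\bigl((\mathcal{Z}^k)^{trans}, T\bigr) \leq c(\mathcal{Z}^k, \epsilon)\, T^\epsilon
\end{equation*}
for every $\epsilon > 0$. If this can be upgraded to outright finiteness of $(\mathcal{Z}^k)^{trans} \cap \mathbb{Z}^4$, then each remaining sporadic primitive quadruple $(p_0, q_0, p_0', q_0')$ admits an integer matrix $M$ with $(p_0,q_0)^T = M (p_0', q_0')^T$ (using $\gcd(p_0', q_0') = 1$), and these finitely many extra matrices can be absorbed into the list.

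The main obstacle is upgrading the sub-polynomial Pila--Wilkie bound to an actual finiteness statement: a priori it is compatible with infinitely many sporadic transcendental integer quadruples of super-polynomially growing height. Eliminating this possibility appears to require either a quantitative sharpening of Pila--Wilkie (in the spirit of Binyamini--Novikov) adapted to $\mathbb{R}_{an}$, or a direct Diophantine input exploiting the asymptotic expansion of $\Theta_\mathcal{M}$ from Theorem \ref{NZ1}. A second, more fundamental obstacle is that the preceding conjecture on semialgebraic subsets, on which the entire strategy rests, is itself open.
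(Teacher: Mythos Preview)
The statement is a conjecture, not a theorem; the paper does not prove it. The paper's entire justification is a single sentence asserting that, since only finitely many integer quadruples $(a,b,c,d)$ satisfy \eqref{19032805}, the preceding conjecture ``combining with the Pila--Wilkie theorem, implies'' this one. Your proposal is precisely an attempt to unpack that one-line heuristic, and it follows the same route the paper intends: the preceding conjecture controls $\mathcal{Z}^{alg}$, Pila--Wilkie controls $\mathcal{Z}^{trans}$.

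You have in fact been more careful than the paper. The word ``implies'' glosses over exactly the gap you name: Pila--Wilkie yields only $N(\mathcal{Z}^{trans},T)\le cT^{\epsilon}$, which does not force $\mathcal{Z}^{trans}\cap\mathbb{Z}^4$ to be finite, and without finiteness the sporadic coincidences cannot be absorbed into a finite matrix list. So your identification of this as the main obstacle is correct, and the paper does not resolve it either; the implication as written is heuristic.

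One small point: slicing into the sets $\mathcal{Z}^k$ and speaking of ``finitely many admissible values of $k$'' is justified only when the cusp shape is quadratic (so that $Q_{\mathcal{M}}$ has rational coefficients and its integer values form a discrete set). Since the present conjecture is stated for an arbitrary $1$-cusped $\mathcal{M}$, it is cleaner to run the argument directly on $\mathcal{Z}$ itself, as the paper's heuristic does; Lemma~\ref{18101002} already shows $\mathcal{Z}$ is definable in $\mathbb{R}_{an}$, and the preceding conjecture in any case confines $\mathcal{Z}^{alg}$ to the locus $Q_{\mathcal{M}}(x,y)=Q_{\mathcal{M}}(z,w)$.
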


More generally, we expect a similar phenomenon is true for any $n$-cusped hyperbolic $3$-manifold. 

\begin{conjecture}
Let $\mathcal{M}$ be an $n$-cusped hyperbolic $3$-manifold. Then there exists 
\begin{equation*}
\{(a_{i,j},b_{i,j},c_{i,j},d_{i,j})\in \mathbb{Z}^4)\}_{(1\leq i\leq n, 1\leq j\leq n_i)}
\end{equation*}
such that if
\begin{equation*}
\text{\normalfont\ vol}\big(\mathcal{M}_{(p_1/q_1, \dots, p_n/q_n)}\big)=\text{\normalfont\ vol}\big(\mathcal{M}_{(p'_1/q'_1, \dots, p'_n/q'_n)}\big),
\end{equation*}
then
\begin{equation*}
p_i=a_{\sigma(i),j}p'_{\sigma(i)}+b_{\sigma(i),j}q'_{\sigma(i)}, \quad q_i=c_{\sigma(i),j}p'_{\sigma(i)}+d_{\sigma(i),j}q'_{\sigma(i)}
\end{equation*}
for some $\sigma\in S_n$ and $1\leq j\leq n_{\sigma(i)}$. 
\end{conjecture}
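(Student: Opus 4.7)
The plan is to generalize the Pila--Wilkie-based strategy outlined in Section \ref{18110807} for $n=1$ to the full $n$-cusped setting. The first task is to extend the volume function: the multi-cusped Neumann--Zagier formula produces a real analytic function $\Theta_{\mathcal{M}}(x_1,y_1,\dots,x_n,y_n)$ equal to $\text{vol}(\mathcal{M}_{(x_1/y_1,\dots,x_n/y_n)})-\text{vol}(\mathcal{M})$ for sufficiently large $|x_i|+|y_i|$ at each cusp. An extension of Lemma \ref{18101002}, applied cusp-by-cusp via the substitutions $(x_i,y_i)\mapsto(x_i/y_i,1/y_i)$ at each infinity, should place $\Theta_{\mathcal{M}}$ inside the o-minimal structure $\mathbb{R}_{an}$ on $\mathbb{R}^{2n}$.

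Next, for each permutation $\sigma\in S_n$ I would consider the analytic variety
\[
\mathcal{Z}_{\sigma}:=\bigl\{(\vec{x},\vec{z})\in\mathbb{R}^{4n}\;:\;\Theta_{\mathcal{M}}(\vec{x})=\Theta_{\mathcal{M}}(\sigma\cdot\vec{z})\bigr\},
\]
where $\sigma$ permutes the $n$ coordinate pairs of $\vec{z}$, and set $\mathcal{Z}=\bigcup_{\sigma}\mathcal{Z}_{\sigma}$. Every equality of filling volumes then corresponds to an integer point on some $\mathcal{Z}_{\sigma}$. The Pila--Wilkie theorem (Theorem \ref{18102603}) implies that, up to a subpolynomial residue living on $\mathcal{Z}^{trans}$, all such integer points lie on the algebraic part $\mathcal{Z}^{alg}$; the transcendental residue should be absorbed by a higher-dimensional uniform counting argument in the spirit of Lemma \ref{18110302}, combined with a multi-cusped analogue of Lemma \ref{18110304} that confines equal-volume fillings to fibers of bounded gap in each cusp quadratic form.

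The crux is a geometric classification: every positive-dimensional semialgebraic subset of $\mathcal{Z}$ should be contained in an affine variety of the form $\vec{x}_i=A_{\sigma(i),j}\vec{z}_{\sigma(i)}$ with $A_{\sigma(i),j}\in M_2(\mathbb{Z})$. My attack would be expansion matching: on any algebraic curve inside $\mathcal{Z}$, reading the Neumann--Zagier expansion \eqref{18110808} cusp-by-cusp forces the leading $Q_{\mathcal{M}}^{(i)}$-terms on the two sides to agree, which already constrains the relation between $\vec{x}$ and $\vec{z}$ to be linear and quadratic-form-preserving at each cusp. A Weierstrass-division argument in the spirit of Lemma \ref{18112601}, combined with the non-linearity of the Neumann--Zagier potential function (Lemma \ref{NZ}), should then propagate this linear constraint to all higher-order terms and rule out genuinely non-linear semialgebraic relations. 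Finiteness of the permissible tuples $(a_{i,j},b_{i,j},c_{i,j},d_{i,j})$ is automatic once the classification is in hand, since each $2\times 2$ block must stabilize a positive-definite integral quadratic form, whose integral stabilizer is finite.

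The main obstacle is precisely this classification step. Even the $n=1$ incarnation of it is left as an open conjecture in the paper, and the passage to $n>1$ introduces two further difficulties: bookkeeping the cusp-permutation symmetries through the expansion-matching argument, and ruling out exotic semialgebraic subvarieties that couple coordinates from different cusps in a non-linear way. Controlling this inter-cusp coupling will almost certainly require a finer understanding of the arithmetic nature of the Neumann--Zagier coefficients $c_{2i-1}$ for $i\geq 2$, about which essentially nothing is presently known.
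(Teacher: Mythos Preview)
The statement you are addressing is a \emph{conjecture} in the paper, not a theorem: the paper offers no proof of it, only the heuristic discussion in Section~\ref{18110807} leading up to it. So there is no ``paper's own proof'' against which to compare your proposal.

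What you have written is not a proof but a strategy outline, and you yourself flag this accurately at the end. Your plan faithfully extends the paper's own Section~\ref{18110807} heuristic to the multi-cusp setting: define the equal-volume variety $\mathcal{Z}$ in $\mathbb{R}^{4n}$, invoke Pila--Wilkie to reduce to $\mathcal{Z}^{alg}$, and then classify the semialgebraic pieces as linear. The genuine gap you identify is exactly the one the paper leaves open: the classification of positive-dimensional semialgebraic subsets of $\mathcal{Z}$ (Conjecture~1 in the paper) is unproven even for $n=1$, and without it no amount of o-minimal bookkeeping will close the argument. Your suggestion to attack it via expansion-matching and a Weierstrass-division argument in the spirit of Lemma~\ref{18112601} is plausible in spirit, but the paper's Lemma~\ref{18112601} only handles the very special situation where the volume-equality locus coincides with a single quadratic-form hypersurface; extracting a full linear classification of \emph{all} semialgebraic subvarieties from the Neumann--Zagier expansion would require substantially new input, as you acknowledge in your final paragraph.

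In short: your proposal is a reasonable research plan that aligns with the paper's own speculation, but it is not a proof, and the paper contains none either.
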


Finally the above conjecture implies the following conjecture, which answers Question \ref{18102801} as well as Question \ref{gromov} completely.

\begin{conjecture}
Let $\mathcal{M}$ be an $n$-cusped hyperbolic $3$-manifold. Then there exists $c=c(\mathcal{M})$ such that the number of hyperbolic Dehn fillings of $\mathcal{M}$ with any given volume $v$ is uniformly bounded by $c$. 
\end{conjecture}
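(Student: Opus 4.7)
The plan is to reduce the general $n$-cusped problem to the same geometric setup used in the one-cusped quadratic case but now in the $4n$-dimensional ambient space, and then combine the o-minimal machinery of Section \ref{back} with the Pila--Wilkie counting theorem (Theorem \ref{18102603}) in a way that genuinely uses its strength, which was not required when the cusp shape was quadratic. First I would extend Section \ref{NZAVF} to $n$ cusps. The Neumann--Zagier potential becomes a multivariate analytic series $\Phi_{\mathcal{M}}(u_1,\dots,u_n)$, producing an asymptotic expansion for $\mathrm{vol}(\mathcal{M}_{(p_1/q_1,\dots,p_n/q_n)})-\mathrm{vol}(\mathcal{M})$ in terms of $A_i=p_i+\tau_i q_i$ plus cross-cusp corrections, where $\tau_i$ is the $i$-th cusp shape. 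Arguing cusp-by-cusp as in Lemma \ref{18101002}, the multivariate volume function $\Theta_{\mathcal{M}}(\mathbf{x},\mathbf{y})$ is analytic at infinity in each pair $(x_i,y_i)$ separately and therefore belongs to $\mathbb{R}_{an}$ as a function of $2n$ variables on the complement of a large compact set. I would also extend Lemma \ref{18110304} to show that a volume coincidence forces each $|Q_{\mathcal{M}_i}(p_i,q_i)-Q_{\mathcal{M}_i}(p'_{\sigma(i)},q'_{\sigma(i)})|$ to be bounded by a universal $m(\mathcal{M})$ for some permutation $\sigma$ of cusps, using that the diagonal leading term dominates the error provided all slopes are sufficiently large.

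Second, I would introduce the coincidence variety
\[
\mathcal{Z}:=\{(\mathbf{x},\mathbf{y},\mathbf{x}',\mathbf{y}')\;:\;\Theta_{\mathcal{M}}(\mathbf{x},\mathbf{y})=\Theta_{\mathcal{M}}(\mathbf{x}',\mathbf{y}')\}\subset\mathbb{R}^{4n},
\]
a definable set in $\mathbb{R}_{an}$, and mimic Section \ref{Ph} by imposing, for each admissible tuple $\mathbf{k}$, the auxiliary cuts $Q_{\mathcal{M}_i}(\mathbf{x},\mathbf{y})-Q_{\mathcal{M}_i}(\mathbf{x}',\mathbf{y}')=k_i$, carving out a definable $\mathcal{Z}^{\mathbf{k}}$. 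Bounding $N_{\mathcal{M}}(v)$ reduces, as in the proof of Theorem \ref{18110705}, to a uniform bound on integer points in the fibers $\mathcal{Z}^{\mathbf{k}}_{(\mathbf{p},\mathbf{q})}$. Theorem \ref{18103002} supplies such a bound at the level of connected components, so the only obstruction is the presence of positive-dimensional fibers carrying infinitely many integer points. To handle these I would apply Pila--Wilkie to each horizontal slice: for any $\epsilon>0$ the transcendental part contributes only $O(T^\epsilon)$ integer points of norm at most $T$ on each fiber, and a uniform-family version of Pila--Wilkie (due to Pila--Wilkie and refined by Scanlon and collaborators) makes this constant independent of $(\mathbf{p},\mathbf{q})$.

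The decisive new step is the classification of the algebraic part $\mathcal{Z}^{alg}$ of $\mathcal{Z}$, which is precisely the next-to-last conjecture in Section \ref{18110807}. I would attempt to prove that every positive-dimensional connected semialgebraic subset of $\mathcal{Z}$ lies on an affine subspace of the form $(\mathbf{x}',\mathbf{y}')=L_\sigma(\mathbf{x},\mathbf{y})$ where $L_\sigma$ consists of a permutation $\sigma\in S_n$ of the cusps followed by an integer matrix in each $2\times 2$ block preserving the corresponding quadratic form $Q_{\mathcal{M}_{\sigma(i)}}$. Since the integral orthogonal group of a positive-definite binary form is finite, this produces only a finite list of candidate linear maps, so $\mathcal{Z}^{alg}$ contributes at most a bounded number of coincidences per slice. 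Combined with the Pila--Wilkie bound on the transcendental part and Theorem \ref{18103002}, this yields a uniform $c=c(\mathcal{M})$ and establishes the conjecture.

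The main obstacle is the algebraic classification. In the 1-cusped quadratic case the paper bypasses it by the Weierstrass-division argument of Lemma \ref{18112601}, which ties $\Theta_{\mathcal{M}}$ rigidly to a single rational quadratic form. For general $n$ and non-quadratic cusp shapes one must instead exploit the genuine transcendence of $\Phi_{\mathcal{M}}$ established in Lemma \ref{NZ} and show that no nontrivial algebraic relation between two copies of the multivariate Neumann--Zagier expansion can hold outside the obvious linear symmetries of the cusp tori. A plausible route is a homogeneous-degree induction comparing the leading $Q_{\mathcal{M}_i}^{-1}$ terms on both sides of an alleged semialgebraic relation, followed by a Weierstrass-style division with respect to the product $\prod_i Q_{\mathcal{M}_i}$ generalising Lemma \ref{18112601}, supplemented by a Gelfond--Schneider or Ax--Schanuel input to prevent the higher coefficients $c^{(i)}_{2k-1}$ from accidentally aligning. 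Even the $n=2$ case with both cusp shapes quadratic is, I expect, where the bulk of the work will lie, and settling it should open the door to the general statement.
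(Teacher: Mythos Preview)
The statement you are attempting is a \emph{conjecture} in the paper, not a theorem; the paper offers no proof, only the remark that it would follow from the preceding Conjecture 3. Your proposal is therefore a research strategy rather than a proof, and as such it is honest in one respect: you correctly isolate the classification of $\mathcal{Z}^{alg}$ as the crux, and this is precisely the content of the paper's own Conjectures 1 and 3, which remain open. The Weierstrass-division and Gelfond--Schneider ideas you sketch for attacking it are reasonable first moves, but you are right that even the $n=2$ quadratic case is not in hand, so at this stage nothing has been proved.

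There is, moreover, a further gap you gloss over. Even granting the classification of $\mathcal{Z}^{alg}$, the Pila--Wilkie theorem (in any uniform-family form) bounds the number of rational points on the transcendental part of a fiber only by $c(\epsilon)T^{\epsilon}$, where $T$ is the height of those points. On the fiber $\mathcal{Z}^{\mathbf{k}}_{(\mathbf{p},\mathbf{q})}$ the relevant height is $T\asymp |(\mathbf{p},\mathbf{q})|$, which tends to infinity, so Pila--Wilkie yields a bound that \emph{grows} with the Dehn-filling parameter, not a uniform constant. To close the argument you would need the transcendental part of each fiber to carry \emph{no} integer points (or at most boundedly many, independently of $(\mathbf{p},\mathbf{q})$), which is essentially Conjecture 2/3 itself rather than a consequence of Pila--Wilkie applied to Conjecture 1. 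Note that the paper's proof in the one-cusped quadratic case never invokes Pila--Wilkie at all: it relies on the discreteness of the quadratic-form values and the annulus-counting Lemma \ref{18110302}, a device that is not available once the cusp shape is non-quadratic or $n\ge 2$.
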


\vspace{5 mm}
Department of Mathematics, POSTECH \\
77 Cheong-Am Ro, Pohang, South Korea\\
\\
\emph{Email Address}: bogwang.jeon@postech.ac.kr

\end{document}